\DeclareMathOperator{\diag}{diag}
\newtheorem{theorem}{Theorem}[section]
\newtheorem{proposition}[theorem]{Proposition}
\newtheorem{lemma}[theorem]{Lemma}
\newtheorem{corollary}[theorem]{Corollary}
\newtheorem{remark}[theorem]{Remark}
\title[$\{0,\pm 1\}$-matrices with large determinants]{Symmetric and skew-symmetric $\{0,\pm 1\}$-matrices \\ with large determinants }
\author{ Gary Greaves }
\thanks{G.G. was supported by JSPS KAKENHI; 
grant number: 26$\cdot$03903.}
\address{School of Physical and Mathematical Sciences, 
Nanyang Technological University, 21 Nanyang Link, Singapore 637371}
\email{grwgrvs@gmail.com}
\author{Sho Suda}
\thanks{S.S. was supported by JSPS KAKENHI; grant number: 15K21075. }
\address{Department of Mathematics Education,  Aichi University of Education, Kariya, 448-8542, Japan}
\email{suda@auecc.aichi-edu.ac.jp}
\begin{document}

\subjclass[2010]{05B20}

\keywords{D-optimal design, EW matrix, conference matrix, skew-symmetric matrix}

\begin{abstract}
	We show that the existence of $\{\pm 1\}$-matrices having largest possible determinant is equivalent to the existence of certain tournament matrices.
	In particular, we prove a recent conjecture of Armario.
	We also show that large submatrices of conference matrices are determined by their spectrum.
\end{abstract}

\maketitle



\section{Introduction}

Throughout, $I_n$, $J_n$, and $O_n$ will (respectively) always denote the $n \times n$ identity matrix, all-ones matrix, and all-zeros matrix.
We omit the subscript when the order is understood.
In Section~\ref{sec:submatrices_of_skew_hadamard_matrices}, the matrices denoted by $J$ and $O$ may not be square matrices.
We use $\mathbf{0}$ and $\mathbf{1}$ to denote the all-zeros and all-ones (column) vectors respectively.

Let $X$ be a $\{\pm 1\}$-matrix of order $n$.
As usual, $X$ is called \textbf{symmetric} if $X - X^\top = O$ and, abusing language, we call $X$ \textbf{skew-symmetric} if $X+X^\top = 2I$, i.e., the matrix $X-I$ is skew-symmetric in the usual sense.
We call $X$ a \textbf{D-optimal design} if the absolute determinant of $X$ is the maximum absolute determinant among all $\{\pm 1\}$-matrices of order $n$.
A famous inequality due to Hadamard~\cite{Hadamard1893} is the following:
\begin{equation}
	\label{ineq:Hadamard}
	|\det(X)| \leqslant n^{n/2}.
\end{equation}
Equality is achieved in \eqref{ineq:Hadamard} if and only if the columns of $X$ are orthogonal.
Furthermore if equality holds then $X$ is called a \textbf{Hadamard matrix}.
It is well-known that the order of a Hadamard matrix must be $1$, $2$, or a multiple of $4$ and it is conjectured that Hadamard matrices exist for all such orders.
Even stronger still, it is conjectured that skew-symmetric Hadamard matrices exist for all orders divisible by $4$.
There has been extensive work on Hadamard matrices, and we refer to the \emph{Handbook of Combinatorial Designs}~\cite{CRCHAndbook2007} for relevant background.

Hadamard's inequality can be improved if we restrict to matrices whose orders are not divisible by $4$.
Indeed, Ehlich~\cite{Ehlich1964} and Wojtas~\cite{Wojtas1964} independently showed that for a $\{\pm 1\}$-matrix $X$ of order $n \equiv 2\pmod{4}$, Hadamard's inequality can be strengthened to
\begin{align}\label{ineq:EW}
|\det(X)| \leqslant 2(n-1)(n-2)^{(n-2)/2}.
\end{align}
Moreover, there exists a $\{\pm 1\}$-matrix achieving equality in \eqref{ineq:EW} if and only if there exists a $\{\pm 1\}$-matrix $B$ such that
\begin{align}\label{eq:ew}
BB^\top=B^\top B=
\begin{pmatrix}
(n-2)I+2J & O_{n/2} \\
O_{n/2} & (n-2)I+2J
\end{pmatrix}.
\end{align}
A $\{\pm 1\}$-matrix of order $n$ is called an \textbf{EW matrix} if it satisfies \eqref{eq:ew}.
Clearly Hadamard matrices and EW matrices are D-optimal designs.
Note that it is known that EW matrices exist only if $2(n-1)$ is the sum of two squares, hence there is no analogue of the Hadamard conjecture for EW matrices.
We refer again to the \emph{Handbook of Combinatorial Designs}~\cite{CRCHAndbook2007} for background on EW matrices. 

A \textbf{tournament matrix} is a $\{0,1\}$-matrix $A$ such that $A + A^\top = J-I$. 
A tournament matrix is called \textbf{doubly regular} if, for some $t \in \mathbb N$, we have $AA^\top = tJ + (t+1)I$.
In 1972, Reid and Brown~\cite{ReidBrown72} showed that the existence of skew-symmetric Hadamard matrices of order $n$ is equivalent to the existence of doubly regular tournament matrices of order $n-1$.
Another important property of doubly regular tournaments is that they can be characterised by their spectrum \cite[Proposition 3.1]{DeCaen92}.

Recently, Armario~\cite[Page 10]{Armario15} conjectured that the existence of skew-symmetric EW matrices is equivalent to the existence of tournament matrices with a certain spectrum.
We write $\chi_M(x) := \det(xI - M)$ to denote the characteristic polynomial of a matrix $M$.
In this paper we prove Armario's conjecture, that is, we prove the following theorem.

\begin{theorem}\label{thm:Aconj}
	There exists a skew-symmetric EW matrix of order $4t+2$ if and only if there exists a tournament matrix $A$ with characteristic polynomial
    \[
      \chi_A(x) = \left ( x^3 - (2t-1)x^2 - t(4t-1) \right ) \left ( x^2+x+t \right )^{2t-1}.
    \]
\end{theorem}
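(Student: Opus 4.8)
The plan is to establish the equivalence by translating the EW matrix conditions into spectral conditions on an associated tournament matrix, mirroring the Reid–Brown correspondence for the skew-symmetric Hadamard case. First I would take a skew-symmetric EW matrix $X$ of order $n = 4t+2$. Writing $X = I + S$ where $S$ is skew-symmetric in the usual sense, the entries of $S$ are in $\{0,\pm 1\}$ with $S$ having zero diagonal, and I would associate to $X$ the tournament matrix $A$ defined by $A_{ij} = 1$ exactly when $S_{ij} = 1$ (so $A + A^\top = J - I$ and $S = A - A^\top$). The core of the argument is then to compute what the defining EW relation \eqref{eq:ew} says about $A$. Since $X = I + A - A^\top$, I would expand $XX^\top$ in terms of $A$, $A^\top$, $AA^\top$, $A^\top A$, $A^2$, and $J$, and match it against the block matrix $\begin{pmatrix}(n-2)I+2J & O \\ O & (n-2)I+2J\end{pmatrix}$ (after suitably permuting rows/columns so the two diagonal blocks appear).

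The next step is to convert the matrix identity for $XX^\top$ into the eigenvalue data of $A$. The eigenvalues of $S = A - A^\top$ are purely imaginary, and the EW structure of $XX^\top = X^\top X$ forces $X$ (equivalently $S$) to be a normal matrix with a very restricted spectrum: the block form with two blocks $(n-2)I+2J$ means $XX^\top$ has eigenvalues $(n-2)$ (with high multiplicity) and $(n-2) + 2\cdot(n/2) = 2(n-1)$ (with multiplicity $2$, one from each block). I would then relate $|\det X|^2 = \det(XX^\top) = (2(n-1))^2 (n-2)^{n-2}$, confirming the D-optimality, and more importantly read off the singular values of $X$, hence the moduli of the eigenvalues of $S$. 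Because $S$ is skew-symmetric its nonzero eigenvalues come in conjugate pairs $\pm i\mu$, and the eigenvalues of $X = I + S$ are $1 \pm i\mu$; combining $|1 \pm i\mu|^2 = 1 + \mu^2$ with the two allowed singular-value-squares $n-2 = 4t$ and $2(n-1) = 2(4t+1)$ pins down $\mu^2 \in \{4t-1, 8t+1\}$ with the correct multiplicities. From the eigenvalues $\lambda$ of $S$ I recover the eigenvalues of $A$ via $A = \tfrac{1}{2}(J - I + S)$ together with the Perron eigenvector of $J$, which lets me translate the spectrum of $S$ into the claimed factorization of $\chi_A(x)$, with the cubic factor carrying the interaction between $S$ and the all-ones direction and the quadratic factor $(x^2+x+t)^{2t-1}$ coming from the eigenvalues on the orthogonal complement of $\mathbf 1$.

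For the converse, I would start from a tournament matrix $A$ with the prescribed characteristic polynomial, set $S = A - A^\top$ and $X = I + S$, and verify that $X$ is a skew-symmetric EW matrix. The given factorization of $\chi_A$ determines the full spectrum of $A$, and since $A+A^\top = J-I$ I can extract the spectrum of $S$ on $\mathbf 1^\perp$ and on the line $\langle \mathbf 1\rangle$; the quadratic factor guarantees that $S$ restricted to $\mathbf 1^\perp$ has the two eigenvalues $\pm i\sqrt{4t-1}$ each of the right multiplicity, which is exactly the spectral condition equivalent to the EW block relation. The delicate point is that knowing $\chi_A$ controls only the eigenvalues of $A$, not directly those of the non-normal combination $S$; I must argue that the tournament relation $A + A^\top = J - I$ forces enough structure (in particular that $A$ is normal, i.e. $AA^\top = A^\top A$, which is where the very specific shape of $\chi_A$ does its work) to recover $XX^\top = X^\top X$ of the required block form.

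The main obstacle I anticipate is precisely this converse normality/reconstruction step: matching characteristic polynomials is a statement about eigenvalues alone, whereas the EW property \eqref{eq:ew} is a statement about singular values together with the precise two-block orthogonality pattern. I expect the real work to lie in showing that a tournament matrix with this exact characteristic polynomial must satisfy $AA^\top = A^\top A$ with $AA^\top$ of the correct form, so that the singular values coincide with the moduli of the eigenvalues and the EW relation follows; the cubic factor's discriminant and the integrality constraints it imposes (analogous to De Caen's spectral characterization of doubly regular tournaments cited above) should be the tools that rule out any non-normal tournament matrix sharing this spectrum.
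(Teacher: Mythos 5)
Your proposal has genuine gaps in both directions, and the most concrete one is an order mismatch that you never address. The prescribed polynomial has degree $3 + 2(2t-1) = 4t+1$, so the tournament matrix $A$ in the theorem has order $4t+1$, while the EW matrix has order $4t+2$. In the forward direction, your association $S = A - A^\top$ produces a tournament of order $4t+2$, whose characteristic polynomial has degree $4t+2$; the paper reaches the stated degree-$(4t+1)$ polynomial only by first \emph{normalising} the EW matrix, which forces the associated order-$(4t+2)$ tournament to have a zero first row, so that its characteristic polynomial factors as $x \chi_A(x)$ with $A$ of order $4t+1$ (Lemma~\ref{lem:armario}). In the converse the mismatch is fatal as written: your candidate $X = I + A - A^\top$ has odd order $4t+1$, and no matrix of odd order can satisfy the EW identity \eqref{eq:ew}, whose diagonal blocks have size $n/2$. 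The paper instead borders the Seidel matrix $S^\prime = J - I - 2A$ with an extra $\pm\mathbf{1}$ row and column to reach order $4t+2$, computes the characteristic polynomial of the bordered matrix, and only then extracts an EW matrix (Lemma~\ref{lem:armarioCon}); your proposal contains no such step.

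The second gap is that you argue with spectra where spectra do not suffice, because $J$ commutes with neither $S$ nor $A$. In the forward direction, the spectrum of $S$ (which, as you correctly say, follows from \eqref{eq:ew}) does \emph{not} determine $\chi_A$ for $A = (J - I - S)/2$: one needs to know how $\mathbf{1}$ distributes over the eigenspaces of $S$, i.e.\ the main angles, and these are not switching-invariant --- an unnormalised EW matrix generally yields a tournament with a different characteristic polynomial. This is exactly why the paper proves the score-vector statement (Lemma~\ref{lem:scorevec}, Corollary~\ref{cor:Sscore}), computes the main angles (Lemma~\ref{lem:mainangs}), and feeds both into the main-angle formula of Lemma~\ref{lem:charconvert}; your phrase ``together with the Perron eigenvector of $J$'' papers over precisely this step. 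In the converse, the obstacle you single out --- normality of $A$ --- is a red herring: since $A - A^\top$ is skew-symmetric in the usual sense, $X = I + A - A^\top$ satisfies $XX^\top = X^\top X = I - (A - A^\top)^2$ automatically, whatever $A$ is. The genuine work is elsewhere: (a) transferring eigenvectors from $A$ to the Seidel matrix, which the paper does via the Kirkland--Shader result (Lemma~\ref{lem:re12}) that eigenvectors for eigenvalues with real part $-1/2$ (the roots of $x^2 + x + t$) are orthogonal to $\mathbf{1}$, together with a main-angle computation for the two remaining eigenvalues; and (b) showing that the resulting spectrum of the bordered order-$(4t+2)$ matrix forces its square to have, up to switching, the two-block form of \eqref{eq:ew} --- this is Lemma~\ref{lem:switchingForm}, which rests on structural results about positive semidefinite matrices with constant diagonal (Lemmas~\ref{lem:posSemi} and~\ref{lem:posDef}), not on discriminant or integrality properties of $\chi_A$ as you anticipate.
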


A \textbf{Seidel matrix} is a $\{0,\pm 1\}$-matrix $S$ with zero diagonal and all off-diagonal entries nonzero such that $S = \pm S^\top$.
Note that our definition of a Seidel matrix is more general than the usual definition of a Seidel matrix, since we also allow skew-symmetric matrices.
A \textbf{conference matrix} is a Seidel matrix $S$ of order $n$ such that $SS^\top = (n-1)I$.
It is straightforward to deduce that the eigenvalues of a conference matrix are $\pm \sqrt{n-1}$ (resp. $\pm \sqrt{1-n}$) if it is symmetric (resp. skew-symmetric) with each of the two distinct eigenvalues having equal multiplicity.
The second half of the paper is devoted to studying large principal submatrices of conference matrices.
We show that, for all $t \in \mathbb N$, the existence of a skew-symmetric conference matrix of order $4t+4$ is equivalent to the existence of a skew-symmetric Seidel matrix of order $4t+i$ having a prescribed spectrum for each $i \in \{3,2,1\}$.
See Theorem~\ref{thm:skewMain}.
%
%
%
The existence of skew-symmetric Hadamard matrices is equivalent to the existence of skew-symmetric conference matrices.
Indeed, if $C$ is a skew-symmetric conference matrix then $C+I$ is a skew-symmetric Hadamard matrix.

We also establish a similar result for symmetric conference matrices, i.e., we show that, for all $t \in \mathbb N$, the existence of a symmetric conference matrix of order $4t+2$ is equivalent to the existence of a symmetric Seidel matrix of order $4t+i$ having a prescribed spectrum for each $i \in \{1,0,-1\}$.  
See Theorem~\ref{thm:symMain}.

%

Symmetric conference matrices of order $4t+2$ can be constructed from strongly regular graphs with parameters $(4t+1,2t,t-1,t)$.
(See Section 10.4 of Brouwer and Haemers' book~\cite{SpecGraphBH}.)

Now for the organisation of the paper.
In Section~\ref{sec:tournaments_and_skew_seidel_matrices}, we layout our main tools for the subsequents proofs.
We prove Theorem~\ref{thm:Aconj} in Section~\ref{sec:proof} and in Section~\ref{sec:submatrices_of_skew_hadamard_matrices} we show that large submatrices of conference matrices are determined by their spectra.

\section{Basic tools for Seidel matrices} 
\label{sec:tournaments_and_skew_seidel_matrices}

In this paper our main object of study are Seidel matrices.
In this section we state some basic results concerning these matrices.

Let $\mathcal O_n(\mathbb{Z})$ denote the orthogonal group generated by signed permutation matrices of order $n$.
We say that two $\mathbb{Z}$-matrices $A$ and $B$ (of order $n$) are \textbf{switching equivalent} if $A = P^{-1}BP$ for some matrix $P \in \mathcal O_n(\mathbb{Z})$.
It is clear that two switching-equivalent matrices have the same spectrum.
We call a matrix \textbf{normalised} if all nonzero entries of the first row equal $1$.
Note that every switching-equivalence class of Seidel matrices contains normalised matrices.

\subsection{Spectral tools} 
\label{sub:spectra}

Let $M$ be a matrix with $r$ distinct eigenvalues $\theta_1,\dots,\theta_r$ such that $\theta_i$ has (algebraic) multiplicity $m_i$.
We write the spectrum of $M$ as $\{ [\theta_1]^{m_1},\dots,[\theta_r]^{m_r} \}$ and we use $\Lambda(M)$ to denote the set $\{\theta_1,\dots,\theta_r\}$ of distinct eigenvalues of $M$.

Let $S$ be a Seidel matrix of order $n$.
Then $S$ is a normal matrix with diagonal entries $0$. 
Moreover, the diagonal entries of $S^2$ are all equal to $n-1$ (resp. $1-n$) if $S$ is symmetric (resp. skew-symmetric).
Putting the above facts about Seidel matrices together, we obtain the following.

\begin{proposition}\label{pro:basic}
	Let $S$ be a Seidel matrix of order $n$ with spectrum $\{ [\theta_1]^{m_1},\dots,[\theta_r]^{m_r} \}$.
	Then
	\begin{enumerate}[(i)]
		\item $\sum_{i=1}^r m_i = n$;
		\item $\sum_{i=1}^r m_i \theta_i = 0$;
		\item $\sum_{i=1}^r m_i \theta_i^2 = 
		\begin{cases}
			n(n-1), & \text{ if $S$ is symmetric} \\
			-n(n-1), & \text{ if $S$ is skew-symmetric.}
		\end{cases}
		$
	\end{enumerate}
\end{proposition}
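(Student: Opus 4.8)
The three identities are the standard "moment" relations obtained by comparing the spectrum of $S$ with the trace of small powers of $S$, so the plan is to prove each item by computing $\operatorname{tr}(S^k)$ in two ways: once from the eigenvalues and once directly from the entries of $S$. The key fact I would invoke at the outset is the one recorded just before the statement, namely that a Seidel matrix is normal (since $S=\pm S^\top$), so $S$ is diagonalisable (unitarily), and hence for every $k\geqslant 0$ we have $\operatorname{tr}(S^k)=\sum_{i=1}^r m_i\theta_i^k$, the sum being taken over the distinct eigenvalues with their algebraic multiplicities.

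\begin{proof}[Proof sketch]
First I would note that $S$ is normal, so $\operatorname{tr}(S^k)=\sum_{i=1}^r m_i\theta_i^k$ for all $k\geqslant 0$. Applying this with $k=0$ gives $\sum_i m_i=\operatorname{tr}(I_n)=n$, which is item~(i). Next, taking $k=1$ and using that $S$ has zero diagonal yields $\sum_i m_i\theta_i=\operatorname{tr}(S)=0$, which is item~(ii). Finally, for item~(iii) I would compute $\operatorname{tr}(S^2)$ from the entries: the $(j,j)$ entry of $S^2$ is $\sum_\ell S_{j\ell}S_{\ell j}$. When $S$ is symmetric this equals $\sum_\ell S_{j\ell}^2=n-1$ (each off-diagonal entry is $\pm1$ and the diagonal is $0$), whereas when $S$ is skew-symmetric $S_{\ell j}=-S_{j\ell}$ off the diagonal, so the diagonal entry of $S^2$ is $-\sum_\ell S_{j\ell}^2=1-n$. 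Summing over the $n$ diagonal positions gives $\operatorname{tr}(S^2)=n(n-1)$ in the symmetric case and $-n(n-1)$ in the skew-symmetric case, and equating with $\sum_i m_i\theta_i^2$ gives item~(iii).
\end{proof}

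There is essentially no serious obstacle here: the content is entirely the observation that normality licenses the trace-power formula together with the two elementary computations of the diagonal of $S^2$. The only point requiring a moment's care is the sign in the skew-symmetric case, where one must remember that our convention makes $S$ (not $S-I$) genuinely anti-symmetric with zero diagonal, so that $S_{\ell j}=-S_{j\ell}$ precisely off the diagonal and the cross terms acquire the extra minus sign; this is exactly what produces the $-n(n-1)$ on the right-hand side and is the sole place where the symmetric and skew-symmetric cases diverge.
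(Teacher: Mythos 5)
Your proposal is correct and matches the paper's argument: the paper records exactly these facts (normality of $S$, zero diagonal, and diagonal entries of $S^2$ equal to $n-1$ or $1-n$) immediately before the proposition and obtains the three identities by "putting the above facts together," i.e.\ by the same trace-of-powers computation you give. The only cosmetic remark is that the identity $\operatorname{tr}(S^k)=\sum_i m_i\theta_i^k$ holds for any complex matrix with algebraic multiplicities (via triangularization), so invoking normality, while natural here, is not strictly needed.
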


Let $S$ be a skew-symmetric Seidel matrix.
Since the matrix $\sqrt{-1}S$ is Hermitean, all eigenvalues of such matrices are in $\sqrt{-1}\mathbb{R}$.
Furthermore, since $S$ is invariant under complex conjugation, the eigenvalues of $S$ must be symmetric about $0$.
Thus we have the following result.

\begin{proposition}\label{pro:skewProps}
	Let $S$ be a skew-symmetric Seidel matrix of order $n$.
	Then
	\begin{enumerate}[(i)]
		\item if $\theta \in \Lambda(S)$ then $-\theta \in \Lambda(S)$;
		\item if $n$ is odd then $0 \in \Lambda(S)$.
	\end{enumerate}
\end{proposition}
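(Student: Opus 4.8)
The plan is to exploit the fact that a skew-symmetric Seidel matrix is a genuine real skew-symmetric matrix: by definition $S$ satisfies $S = -S^\top$, and all of its entries are real. The two parts then follow from two independent structural observations about such an $S$, namely that its spectrum is purely imaginary and that it is closed under complex conjugation.

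For part $(i)$ I would first locate the spectrum and then exploit reality. Setting $H := \sqrt{-1}\,S$, one checks that $H^* = \overline{H}^\top = -\sqrt{-1}\,S^\top = \sqrt{-1}\,S = H$, so $H$ is Hermitean and therefore has real eigenvalues; consequently every eigenvalue of $S = -\sqrt{-1}\,H$ lies in $\sqrt{-1}\,\mathbb{R}$. Separately, because $S$ is a real matrix, its characteristic polynomial $\chi_S$ has real coefficients, so its set of roots is closed under complex conjugation: if $\theta \in \Lambda(S)$ then $\overline{\theta} \in \Lambda(S)$. Combining the two observations, a purely imaginary $\theta$ satisfies $\overline{\theta} = -\theta$, so $-\theta \in \Lambda(S)$, which is exactly $(i)$ (the case $\theta = 0$ being trivial).

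For part $(ii)$ I would avoid tracking multiplicities and argue directly with the determinant. From $S = -S^\top$ we obtain
\[
  \det(S) = \det(S^\top) = \det(-S) = (-1)^n \det(S),
\]
so when $n$ is odd this forces $2\det(S) = 0$, hence $\det(S) = 0$. A singular matrix has $0$ as an eigenvalue, so $0 \in \Lambda(S)$, as required.

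There is no substantial obstacle here, since this is a standard fact about real skew-symmetric matrices; the proof is essentially the one already sketched in the text preceding the statement. The only point that warrants care is that in $(i)$ neither ingredient suffices on its own: reality alone only gives a conjugate-closed spectrum, and the Hermitean trick alone only gives a purely imaginary spectrum, and it is precisely their conjunction that turns $\overline{\theta}$ into $-\theta$. For $(ii)$ I prefer the determinant computation to a pairing-of-eigenvalues argument because it requires no information about how the multiplicities of $\theta$ and $-\theta$ compare.
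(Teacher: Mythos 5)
Your proposal is correct, and part $(i)$ is precisely the paper's argument: the paper ``proves'' this proposition via the two sentences immediately preceding it, namely that $\sqrt{-1}\,S$ is Hermitean (so $\Lambda(S) \subseteq \sqrt{-1}\,\mathbb{R}$) and that $S$ is real, hence its spectrum is closed under conjugation; these combine exactly as you describe. For part $(ii)$ you take a slightly different micro-route. The paper's implicit argument is a pairing one: since $\chi_S$ has real coefficients, $\theta$ and $\overline{\theta} = -\theta$ have equal algebraic multiplicities, so the nonzero eigenvalues pair off and account for an even number of the $n$ eigenvalues, forcing $0 \in \Lambda(S)$ when $n$ is odd. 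Your identity $\det(S) = \det(S^\top) = (-1)^n \det(S)$ reaches the same conclusion with no multiplicity bookkeeping at all, which makes it marginally more self-contained; on the other hand, the paper's multiset-symmetry formulation is the statement actually invoked later (e.g.\ in the eigenvalue counts of Lemma~\ref{lem:armarioCon} and Section~\ref{sub:proof_for_the_skew_symmetric_case}), so it is worth noting that the stronger ``equal multiplicities'' fact also follows from your part $(i)$ ingredients. Both are standard facts about real skew-symmetric matrices, and either proof is acceptable.
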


Since we do not have a symmetric version of Proposition~\ref{pro:skewProps}, we will use the following result about the product of eigenvalues of a symmetric Seidel matrix.

\begin{proposition}[Corollary 3.6 of \cite{Greaves2016208}]\label{pro:detS}
	Let $S$ be a symmetric Seidel matrix of order $n$.
	Then $\det S \equiv 1-n \pmod 4$.
\end{proposition}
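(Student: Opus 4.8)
The plan is to show that $\det S \bmod 4$ depends only on the order $n$, and not on the particular choice of off-diagonal signs, and then to pin down the value by evaluating the determinant on the single explicit matrix $J_n - I_n$.

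The heart of the argument is a local invariance: I will show that altering one symmetric pair of off-diagonal entries changes the determinant by a multiple of $4$. Fix indices $i \neq j$ and let $S'$ agree with $S$ everywhere except that $S'_{ij} = -S_{ij}$ and $S'_{ji} = -S_{ji}$, so that $S'$ remains a symmetric Seidel matrix. Regarding the determinant as a function $f(x,y)$ of the two entries in positions $(i,j)$ and $(j,i)$ with all other entries frozen, multilinearity gives $f(x,y) = \alpha + \beta x + \gamma y + \delta x y$ for integers $\alpha,\beta,\gamma,\delta$. Writing $a = S_{ij} = S_{ji} \in \{\pm 1\}$, a direct subtraction yields $\det S' - \det S = f(-a,-a) - f(a,a) = -2a(\beta + \gamma)$.

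To finish this step I will use the fact that the adjugate of a symmetric matrix is symmetric: since $\adj(S)^\top = \adj(S^\top) = \adj(S)$, the cofactors satisfy $C_{ij} = C_{ji}$, and identifying these cofactors with the partial derivatives of $f$ at $(a,a)$ forces $\beta = \gamma$. Hence $\det S' - \det S = -4a\beta \equiv 0 \pmod 4$. This is the key point, and the main subtlety is precisely that symmetry upgrades the naive divisibility by $2$ to divisibility by $4$; without it one only obtains $\det S \equiv 1-n \pmod 2$.

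With the local invariance in hand, any symmetric Seidel matrix of order $n$ can be transformed into $J_n - I_n$ by flipping one symmetric pair of signs at a time, so $\det S \equiv \det(J_n - I_n) \pmod 4$. It then remains to evaluate $\det(J_n - I_n)$, which is immediate from the spectrum of $J_n$: the eigenvalues of $J_n - I_n$ are $n-1$ with multiplicity $1$ and $-1$ with multiplicity $n-1$, so $\det(J_n - I_n) = (n-1)(-1)^{n-1}$. A short check confirms $(n-1)(-1)^{n-1} \equiv 1-n \pmod 4$: for even $n$ this equals $1-n$ exactly, and for odd $n$ one has $n-1 \equiv 1-n \pmod 4$ because $n-1$ is even. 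This completes the proof.
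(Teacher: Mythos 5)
Your proof is correct, and it cannot literally be ``the same as the paper's'' because the paper offers no proof at all: Proposition~\ref{pro:detS} is imported verbatim as Corollary~3.6 of \cite{Greaves2016208}. So your argument supplies genuinely self-contained content. Your mechanism --- flip one symmetric pair of off-diagonal entries at a time, show each flip moves the determinant by a multiple of $4$, then evaluate at $J_n-I_n$, whose spectrum gives $\det(J_n-I_n)=(n-1)(-1)^{n-1}\equiv 1-n\pmod 4$ in both parities --- is in effect a proof, in the Seidel setting, of the general fact underlying the cited corollary: symmetric integer matrices with equal diagonals that are congruent modulo $2$ have determinants congruent modulo $4$. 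The decisive step is correctly identified and correctly executed: writing $f(x,y)=\alpha+\beta x+\gamma y+\delta xy$ for the determinant as a function of the two flipped entries, the partial derivatives of $f$ at $(a,a)$ are the cofactors $C_{ij}$ and $C_{ji}$, and these coincide because $\adj(S)$ is symmetric when $S$ is; hence $\beta=\gamma$ and $\det S'-\det S=-2a(\beta+\gamma)=-4a\beta$. This is exactly where symmetry enters and it is not removable: for the skew-symmetric Seidel matrix $\left(\begin{smallmatrix}0&1\\-1&0\end{smallmatrix}\right)$ the determinant is $1\not\equiv 1-2\pmod 4$, consistent with your remark that without symmetry one only gets the congruence modulo $2$. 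The remaining ingredients (integrality of the coefficients of $f$, connectivity of the flip graph on symmetric Seidel matrices of a fixed order, and the final arithmetic check) are all sound. What the paper's citation buys is brevity and access to the stronger results of \cite{Greaves2016208}, which establish congruences for the whole characteristic polynomial of a Seidel matrix rather than just its determinant; what your proof buys is that this particular proposition becomes elementary and self-contained, needing nothing beyond multilinearity of the determinant and symmetry of the adjugate.
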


We will also heavily use the famous interlacing theorem due to Cauchy~\cite{Cau:Interlace} (also see \cite{Fisk:Interlace05} for a short proof).
We will sometimes refer to the application of this theorem with the phrase `by interlacing'.

\begin{theorem}[Interlacing]\label{thm:interlacing}
	Let $H$ be an Hermitean matrix of order $n$ with eigenvalues $\lambda_1 \geqslant \dots \geqslant \lambda_n$.
	Let $H^\prime$ be a principal submatrix of $H$ of order $m$ with eigenvalues $\mu_1 \geqslant \dots \geqslant \mu_m$.
	Then $\lambda_i \geqslant \mu_i \geqslant \lambda_{n-m+i}$ for all $i \in \{1,\dots,m\}$.
\end{theorem}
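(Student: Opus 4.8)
The plan is to derive the interlacing inequalities from the Courant--Fischer min--max characterisation of the eigenvalues of a Hermitean matrix. Since a principal submatrix of order $m$ is obtained by restricting $H$ to a coordinate subset $S$ of size $m$, I would first identify $H'$ with the compression of $H$ to the subspace $V = \{x \in \mathbb{C}^n : x_j = 0 \text{ for all } j \notin S\}$, which has dimension $m$. The key observation is that for every $x \in V$ one has $x^{*} H x = y^{*} H' y$ and $x^{*} x = y^{*} y$, where $y \in \mathbb{C}^m$ is the restriction of $x$ to the coordinates indexed by $S$; hence the Rayleigh quotient of $H$ on $V$ agrees with the Rayleigh quotient of $H'$ on $\mathbb{C}^m$.

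Next I would recall the two dual forms of Courant--Fischer: for the full matrix,
\[
\lambda_i = \max_{\dim U = i}\ \min_{0 \neq x \in U} \frac{x^{*} H x}{x^{*} x} = \min_{\dim U = n-i+1}\ \max_{0 \neq x \in U} \frac{x^{*} H x}{x^{*} x},
\]
the extrema ranging over subspaces $U \subseteq \mathbb{C}^n$ of the indicated dimension, together with the analogous identities for $H'$ in which the subspaces range over $V$.

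To obtain $\mu_i \leqslant \lambda_i$ I would use the max--min form: writing $\mu_i$ as a maximum of the Rayleigh quotient over $i$-dimensional subspaces of $V$, every such subspace is in particular an $i$-dimensional subspace of $\mathbb{C}^n$, so the maximum defining $\mu_i$ is taken over a subfamily of those defining $\lambda_i$, whence $\mu_i \leqslant \lambda_i$. To obtain $\mu_i \geqslant \lambda_{n-m+i}$ I would use the min--max form: $\mu_i$ equals the minimum over $(m-i+1)$-dimensional subspaces of $V$ of the maximal Rayleigh quotient, while $\lambda_{n-m+i}$ equals the same minimum taken over all $(m-i+1)$-dimensional subspaces of $\mathbb{C}^n$; since the former family is contained in the latter, the minimum over it is no smaller, giving $\mu_i \geqslant \lambda_{n-m+i}$. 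Combining the two inequalities yields the claim for every $i \in \{1,\dots,m\}$.

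The only genuine input is the Courant--Fischer theorem itself, which rests on the spectral theorem for Hermitean matrices; granting that, the argument is a purely dimension-counting comparison of extremal subspaces and presents no real obstacle. If one preferred to avoid quoting Courant--Fischer, the main case $m = n-1$ (deleting a single row and the corresponding column) can be established directly, and the general statement then follows by iterating the deletion $n-m$ times; the only slightly delicate point in that inductive version is the bookkeeping of how the eigenvalue index shifts at each step.
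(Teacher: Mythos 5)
Your argument is correct and complete: identifying $H'$ with the compression of $H$ to the coordinate subspace $V$, the dimension count $n-(n-m+i)+1=m-i+1$, and the two subfamily comparisons (max--min giving $\mu_i\leqslant\lambda_i$, min--max giving $\mu_i\geqslant\lambda_{n-m+i}$) are exactly what is needed, and no step fails. Note, however, that the paper does not prove this statement at all: it quotes it as Cauchy's classical interlacing theorem \cite{Cau:Interlace} and refers the reader to Fisk's short proof \cite{Fisk:Interlace05}, so your route is genuinely different from the one the paper points to. Fisk's argument is a characteristic-polynomial one: for the bordered case $m=n-1$ it relates $\chi_H$ and $\chi_{H'}$ through a one-parameter diagonal perturbation of $H$ and deduces interlacing from continuity and monotonicity of the roots, the general case following by iterated deletion. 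Your Courant--Fischer approach buys the statement for arbitrary $m$ in a single stroke, with the two inequalities appearing as dual instances of the same subspace comparison; the price is invoking the min--max theorem (itself resting on the spectral theorem), whereas the polynomial proof is more self-contained but requires the index bookkeeping under iteration that you flag at the end. Indeed, your closing alternative --- prove $m=n-1$ directly and iterate $n-m$ times --- is precisely the structure of the classical proof, so either completion would be acceptable.
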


%


\subsection{Main angles} 
\label{sub:main_angles}

	Let $N$ be a normal matrix of order $n$.
	For each eigenvalue $\theta$ of $N$, let $P_\theta$ be the orthogonal projection onto the eigenspace corresponding to $\theta$.
	Then $N$ has spectral decomposition
	\begin{align}
		N &= \sum_{\theta \in \Lambda(N)} \theta P_\theta, \text{ and } \label{eqn:MA1} \\
		I &= \sum_{\theta \in \Lambda(N)} P_\theta. \label{eqn:MA2}
	\end{align}
	The \textbf{main angles} of $N$ are defined as $\beta_\theta := \frac{1}{\sqrt{n}} || P_\theta \mathbf{1}||$ for each $\theta \in \Lambda(N)$.
	Main angles were originally defined to study the eigenspaces of graphs~\cite{Cvet:1971} (see \cite{Rowlinson:2007} for a survey).
	Let $\theta$ be an eigenvalue of $N$.
	Define $\alpha_\theta$ by
	\[
	\alpha_\theta:= ||P_\theta \mathbf{1} ||^2.
	\]
	Note that $\alpha_\theta = n\beta_\theta^2$.
	We use this alternative definition for notational convenience.
	
	We record some properties for the $\alpha_\theta$ in the following proposition.
	\begin{proposition}\label{pro:meinangles}
		Let $S$ be a skew-symmetric Seidel matrix of order $n$.
		Then for all $\theta \in \Lambda(S)$, we have $\alpha_{\theta} = \alpha_{-\theta}$.
		Furthermore
		\begin{align}
			\sum_{\theta \in \Lambda(S)} \theta^2 \alpha_\theta &= \mathbf{1}^\top S^2 \mathbf{1}; \label{eqn:sum21} \\
			\sum_{\theta \in \Lambda(S)} \alpha_\theta &= n. \nonumber 
		\end{align}
	\end{proposition}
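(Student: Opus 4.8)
The plan is to establish the two displayed identities using the spectral decomposition of $S$, and to deduce the symmetry $\alpha_\theta = \alpha_{-\theta}$ from the structure of the eigenspaces of a skew-symmetric Seidel matrix. Since $S$ is normal, equations \eqref{eqn:MA1} and \eqref{eqn:MA2} apply. First I would verify $\sum_{\theta \in \Lambda(S)} \alpha_\theta = n$: applying \eqref{eqn:MA2} to the vector $\mathbf{1}$ gives $\mathbf{1} = \sum_\theta P_\theta \mathbf{1}$, and since the projections $P_\theta$ have mutually orthogonal ranges, Pythagoras yields $n = \mathbf{1}^\top \mathbf{1} = \sum_\theta \|P_\theta \mathbf{1}\|^2 = \sum_\theta \alpha_\theta$. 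This uses only that $S$ is normal, so that distinct eigenspaces are orthogonal and the $P_\theta$ are self-adjoint idempotents with $P_\theta P_{\theta'} = 0$ for $\theta \neq \theta'$.

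Next I would prove \eqref{eqn:sum21}. From \eqref{eqn:MA1} we get $S^2 = \sum_{\theta} \theta^2 P_\theta$, using again that the projections for distinct eigenvalues annihilate one another and that $P_\theta^2 = P_\theta$. Pairing this with $\mathbf{1}$ on both sides gives $\mathbf{1}^\top S^2 \mathbf{1} = \sum_\theta \theta^2\, \mathbf{1}^\top P_\theta \mathbf{1}$. Since each $P_\theta$ is self-adjoint and idempotent, $\mathbf{1}^\top P_\theta \mathbf{1} = \mathbf{1}^\top P_\theta^\top P_\theta \mathbf{1} = \|P_\theta \mathbf{1}\|^2 = \alpha_\theta$, which delivers \eqref{eqn:sum21} immediately. (One subtlety: because $S$ is skew-symmetric its eigenvalues are purely imaginary and the projections $P_\theta$ may have complex entries, so I should take care that the quantity $\mathbf{1}^\top P_\theta \mathbf{1}$ equals $\|P_\theta \mathbf{1}\|^2$; this holds because $P_\theta$ is an orthogonal projection, hence Hermitean and idempotent, so $\mathbf{1}^\top P_\theta \mathbf{1} = \mathbf{1}^* P_\theta^* P_\theta \mathbf{1} = \|P_\theta\mathbf{1}\|^2$ given that $\mathbf{1}$ is real.)

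Finally, for the identity $\alpha_\theta = \alpha_{-\theta}$, I would exploit the reality of $S$ together with Proposition~\ref{pro:skewProps}(i). Since $S$ is a real matrix, complex conjugation sends the $\theta$-eigenspace onto the $\overline{\theta}$-eigenspace; as the eigenvalues are purely imaginary, $\overline{\theta} = -\theta$, so entrywise conjugation is a bijection between the eigenspaces for $\theta$ and $-\theta$. Consequently $P_{-\theta} = \overline{P_\theta}$, and since $\mathbf{1}$ is a real vector, $P_{-\theta}\mathbf{1} = \overline{P_\theta \mathbf{1}}$, whence $\alpha_{-\theta} = \|P_{-\theta}\mathbf{1}\|^2 = \|\overline{P_\theta \mathbf{1}}\|^2 = \|P_\theta \mathbf{1}\|^2 = \alpha_\theta$. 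I expect the main obstacle to be this last step, namely making the relationship $P_{-\theta} = \overline{P_\theta}$ precise: one must argue carefully that $P_\theta$ is the unique orthogonal projection onto the $\theta$-eigenspace and that conjugating it yields the orthogonal projection onto the conjugate eigenspace, which is the $(-\theta)$-eigenspace by Proposition~\ref{pro:skewProps}(i).
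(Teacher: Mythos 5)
Your proposal is correct and follows essentially the same route as the paper: conjugation of eigenvectors (equivalently, of the projections $P_\theta$) for the symmetry $\alpha_\theta = \alpha_{-\theta}$, and the spectral decomposition \eqref{eqn:MA1}--\eqref{eqn:MA2} together with the mutually orthogonal idempotents $P_\theta$ for the two summation identities. You have merely written out in full the details that the paper's proof leaves implicit.
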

	\begin{proof}
		Let $\theta$ be an eigenvalue of $S$.
		If $\mathbf{v}$ is an eigenvector for $\theta$ then its complex conjugate is an eigenvector for $-\theta$.
		Hence $\alpha_{\theta} = \alpha_{-\theta}$ for all $\theta \in \Lambda(S)$.
		The other two equalities follow from Equations~\eqref{eqn:MA1} and \eqref{eqn:MA2}, using the fact that the $P_{\theta}$ are mutually orthogonal idempotents.
	\end{proof}
	
	The next lemma gives a sufficient condition for eigenvalues of a tournament matrix to have main angle equal to zero.
	
	\begin{lemma}[Lemma 1 of \cite{KirkShad:1994}]\label{lem:re12}
		Let $A$ be a tournament matrix and let $\mathbf{v}$ be an eigenvector of $A$ corresponding to an eigenvalue $\theta$ with $\operatorname{Re}(\theta) = -1/2$.
		Then $\mathbf{v}^\top \mathbf{1} = 0$.
	\end{lemma}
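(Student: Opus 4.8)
The plan is to exploit the defining relation $A + A^\top = J - I$ of a tournament matrix by sandwiching it inside the Hermitian quadratic form determined by $\mathbf{v}$, thereby converting the hypothesis $\operatorname{Re}(\theta) = -1/2$ into a statement about $\mathbf{1}^\top \mathbf{v}$. Concretely, I would form the scalar $\overline{\mathbf{v}}^\top (A + A^\top) \mathbf{v}$ and evaluate it in two different ways, comparing the results at the end.

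First I would use $A\mathbf{v} = \theta \mathbf{v}$ together with the fact that $A$ is a real matrix (so that $A^\top$ is the conjugate transpose of $A$). This gives $\overline{\mathbf{v}}^\top A \mathbf{v} = \theta\, ||\mathbf{v}||^2$ and $\overline{\mathbf{v}}^\top A^\top \mathbf{v} = \overline{(A\mathbf{v})}^\top \mathbf{v} = \bar\theta\, ||\mathbf{v}||^2$, whence the left-hand side equals $(\theta + \bar\theta)\,||\mathbf{v}||^2 = 2\operatorname{Re}(\theta)\,||\mathbf{v}||^2$. For the second evaluation I would write $J = \mathbf{1}\mathbf{1}^\top$, so that $\overline{\mathbf{v}}^\top(J - I)\mathbf{v} = |\mathbf{1}^\top \mathbf{v}|^2 - ||\mathbf{v}||^2$. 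Equating the two expressions yields $|\mathbf{1}^\top \mathbf{v}|^2 = (2\operatorname{Re}(\theta) + 1)\,||\mathbf{v}||^2$, and substituting $\operatorname{Re}(\theta) = -1/2$ makes the coefficient on the right vanish, forcing $\mathbf{1}^\top \mathbf{v} = 0$; since this scalar equals its own transpose, $\mathbf{v}^\top \mathbf{1} = 0$, as required.

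The computation is short and there is no deep obstacle, so the only point demanding care is the bookkeeping with complex conjugates: because $A$ is not symmetric, its eigenvalue $\theta$ and eigenvector $\mathbf{v}$ are in general genuinely complex, and it is essential to use the sesquilinear form $\overline{\mathbf{v}}^\top(\,\cdot\,)\mathbf{v}$ rather than the symmetric bilinear form $\mathbf{v}^\top(\,\cdot\,)\mathbf{v}$. Were one to use the bilinear form, both $\mathbf{v}^\top A\mathbf{v}$ and $\mathbf{v}^\top A^\top\mathbf{v}$ would contribute a factor of $\theta$, collapsing the sum to $2\theta\,\mathbf{v}^\top\mathbf{v}$ and destroying exactly the real-part structure that the argument needs. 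Recognising that the conjugate transpose is the correct pairing is therefore the crux of making the identity come out.
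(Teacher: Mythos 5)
Your proof is correct, and it is essentially the standard argument for this result: the paper itself gives no proof, citing it as Lemma 1 of Kirkland and Shader, whose proof is precisely this evaluation of the sesquilinear form $\overline{\mathbf{v}}^\top(A+A^\top)\mathbf{v}$ against the tournament identity $A+A^\top=J-I$. Your cautionary remark about needing the conjugate-transpose pairing rather than the bilinear form $\mathbf{v}^\top(\cdot)\mathbf{v}$ is exactly the right point of care, since the eigenpair is genuinely complex here (indeed $\theta=-\tfrac12+i\,\operatorname{Im}(\theta)$ is never real when the hypothesis holds nontrivially).
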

	
	The utility of main angles is to obtain expressions for the characteristic polynomial of matrices that differ by some multiple of $J$.
	The next result follows from \cite[Corollary 2.4]{NozSuda:12}.
	
	\begin{lemma}\label{lem:charconvert}
		Let $A$ be a matrix such that $S = J-I-2A$ is a normal matrix and let $c$ be an indeterminate.
		Then
		\begin{align}
			\chi_{S+cJ}(x) &= \chi_S(x)\left (1 - c\sum_{\theta \in \Lambda(S)} \frac{\alpha_\theta}{x-\theta} \right); \label{eqn:conv1} \\
			\chi_{A}(x) &= \left (\frac{-1}{2} \right )^n \chi_{S-J}(-2x-1). \label{eqn:conv2}
		\end{align}
	\end{lemma}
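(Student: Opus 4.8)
The plan is to treat the two identities separately, since they rely on genuinely different mechanisms: equation~\eqref{eqn:conv2} is purely algebraic, whereas~\eqref{eqn:conv1} is where the main angles and the normality of $S$ really enter.

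For~\eqref{eqn:conv2}, I would simply unwind the definition $S = J-I-2A$ to get $A = \tfrac{1}{2}(J-I-S)$ and substitute into $\chi_A(x)=\det(xI-A)$. This gives
\[
xI - A = \tfrac{1}{2}\bigl[(2x+1)I + S - J\bigr] = \left(\tfrac{-1}{2}\right)\bigl[(-2x-1)I - (S-J)\bigr].
\]
Taking determinants and pulling the scalar $\tfrac{-1}{2}$ out of each of the $n$ rows yields
\[
\chi_A(x) = \left(\tfrac{-1}{2}\right)^n \det\bigl((-2x-1)I - (S-J)\bigr) = \left(\tfrac{-1}{2}\right)^n \chi_{S-J}(-2x-1),
\]
as required. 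No spectral information is used here; it is just a change of variable.

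For~\eqref{eqn:conv1}, the idea is to view $S + cJ = S + c\,\mathbf{1}\mathbf{1}^\top$ as a rank-one perturbation of $S$ and apply the matrix determinant lemma. Fixing $x \notin \Lambda(S)$ so that $xI-S$ is invertible, I would write
\[
\chi_{S+cJ}(x) = \det\bigl(xI - S - c\,\mathbf{1}\mathbf{1}^\top\bigr) = \det(xI-S)\bigl(1 - c\,\mathbf{1}^\top (xI-S)^{-1}\mathbf{1}\bigr).
\]
The first factor is $\chi_S(x)$, so everything reduces to evaluating $\mathbf{1}^\top (xI-S)^{-1}\mathbf{1}$. Since $S$ is normal, the spectral decomposition~\eqref{eqn:MA1}--\eqref{eqn:MA2} gives $(xI-S)^{-1} = \sum_{\theta \in \Lambda(S)} (x-\theta)^{-1} P_\theta$, whence $\mathbf{1}^\top (xI-S)^{-1}\mathbf{1} = \sum_{\theta} (x-\theta)^{-1}\,\mathbf{1}^\top P_\theta \mathbf{1}$. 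The key step is the identity $\mathbf{1}^\top P_\theta \mathbf{1} = \alpha_\theta$: because $P_\theta$ is an orthogonal projection we have $P_\theta^* = P_\theta = P_\theta^2$, and since $\mathbf{1}$ is real,
\[
\alpha_\theta = \|P_\theta \mathbf{1}\|^2 = \mathbf{1}^* P_\theta^* P_\theta \mathbf{1} = \mathbf{1}^* P_\theta \mathbf{1} = \mathbf{1}^\top P_\theta \mathbf{1}.
\]
Substituting gives the claimed formula for every $x \notin \Lambda(S)$.

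The main point to watch is the normality hypothesis: it is precisely what supplies the orthogonal spectral decomposition with mutually orthogonal idempotents $P_\theta$, and without it the clean identity $\mathbf{1}^\top P_\theta \mathbf{1} = \alpha_\theta$ would break down. I do not expect a serious obstacle beyond bookkeeping, since the two formulae amount to a rank-one determinant update and a change of variable; the only subtlety is interpreting~\eqref{eqn:conv1} correctly, treating $c$ as an indeterminate and reading the right-hand side as an identity of rational functions (equivalently, as a polynomial identity after clearing the common denominator $\prod_{\theta \in \Lambda(S)} (x-\theta)$), which is legitimate because both sides agree on the cofinite set $\mathbb{C} \setminus \Lambda(S)$.
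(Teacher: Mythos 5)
Your proposal is correct, but it is structurally quite different from what the paper does: the paper gives no proof of this lemma at all, instead deriving it as a consequence of \cite[Corollary 2.4]{NozSuda:12}. Your argument is a self-contained replacement, and both halves check out. For \eqref{eqn:conv2}, the rewriting $xI-A=\left(\tfrac{-1}{2}\right)\bigl[(-2x-1)I-(S-J)\bigr]$ and extraction of the scalar from the determinant is exactly the right (and only) computation, and it correctly isolates where the hypothesis $S=J-I-2A$ is actually used. For \eqref{eqn:conv1}, the combination of the matrix determinant lemma for the rank-one perturbation $c\,\mathbf{1}\mathbf{1}^\top$ with the spectral decomposition $(xI-S)^{-1}=\sum_{\theta\in\Lambda(S)}(x-\theta)^{-1}P_\theta$ is sound; the identity $\mathbf{1}^\top P_\theta\mathbf{1}=\|P_\theta\mathbf{1}\|^2=\alpha_\theta$ is justified precisely as you say, because normality makes each $P_\theta$ a Hermitian idempotent (this matters since the $P_\theta$ can be genuinely complex when $S$ is skew-symmetric). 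Your closing remark about interpreting both sides as polynomials -- note that $\chi_S(x)\,\alpha_\theta/(x-\theta)$ is a polynomial since $\theta$ is a root of $\chi_S$ -- disposes of the ``indeterminate'' issue cleanly. What the two approaches buy: the paper's citation keeps the exposition short and credits the source where this characteristic-polynomial calculus was developed; your proof makes the lemma independent of external results and exposes that \eqref{eqn:conv1} needs only normality of $S$, with the tournament-type relation $S=J-I-2A$ entering only in \eqref{eqn:conv2}.
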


%
%
%


\section{Proof of Armario's conjecture} 
\label{sec:proof}
	
	In this section we prove Theorem~\ref{thm:Aconj}.
	We begin with a result about the entries of $A \mathbf{1}$, where $A$ is a tournament matrix associated to a normalised skew-symmetric EW matrix.
	
	\begin{lemma}[{\cite[Lemma 1]{Armario15}}]\label{lem:scorevec}
		Let $M$ be a normalised skew-symmetric EW matrix of order $4t + 2$ and let $A$ be the matrix obtained by deleting the first row and column of $(J-M)/2$.
		Then the entries of $A \mathbf{1}$ are $2t+1$, $2t$, and $2t-1$, each appearing $t$, $2t+1$, and $t$ times respectively.
	\end{lemma}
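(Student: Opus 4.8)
The plan is to translate the claim about $A\mathbf{1}$ into a statement about the row sums of $M$, and then read those row sums off the defining EW equation, using skew-symmetry to fix the remaining ambiguity. Write $s_i := (M\mathbf{1})_i$ for the $i$-th row sum of $M$. Since $M$ is normalised its first row is $\mathbf{1}^\top$, so the first row of $(J-M)/2$ is $\mathbf{0}^\top$; and since $M_{i1} = -M_{1i} = -1$ for $i>1$, the first column of $(J-M)/2$ below the diagonal is $\mathbf{1}$. From $M+M^\top = 2I$ one checks $(J-M)/2 + ((J-M)/2)^\top = J-I$, so $(J-M)/2 = \bigl(\begin{smallmatrix} 0 & \mathbf{0}^\top \\ \mathbf{1} & A\end{smallmatrix}\bigr)$ is a tournament matrix and $A$ is a tournament matrix of order $4t+1$. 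For $i>1$ the $(i-1)$-th entry of $A\mathbf{1}$ is the $i$-th row sum of $(J-M)/2$ minus the leading $1$, i.e. $(A\mathbf{1})_{i-1} = (n-s_i)/2 - 1 = 2t - s_i/2$ with $n = 4t+2$. Hence it suffices to show that among $s_2,\dots,s_n$ the value $-2$ occurs $t$ times, $0$ occurs $2t+1$ times, and $2$ occurs $t$ times.

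Next I would pin down the possible values of the $s_i$. Because the first row of $M$ equals $\mathbf{1}^\top$, for every $i$ we have $s_i = \sum_k M_{1k}M_{ik} = (MM^\top)_{1i}$. The EW condition tells us that, up to the simultaneous row/column sign changes by which one normalises $M$, the Gram matrix $MM^\top$ is the block matrix of \eqref{eq:ew}; in particular $MM^\top$ has diagonal entries $n$, and in each row there are exactly $n/2-1 = 2t$ off-diagonal entries equal to $\pm 2$ and $n/2 = 2t+1$ entries equal to $0$. Reading off the first row gives $s_1 = n$, while exactly $2t$ of $s_2,\dots,s_n$ equal $\pm 2$ and the remaining $2t+1$ equal $0$.

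It then remains to show that these $2t$ values split evenly into $t$ copies of $+2$ and $t$ copies of $-2$, and here I would use skew-symmetry. From $M+M^\top = 2I$ we get $2\,\mathbf{1}^\top M\mathbf{1} = \mathbf{1}^\top(M+M^\top)\mathbf{1} = 2n$, so $\sum_i s_i = \mathbf{1}^\top M\mathbf{1} = n$. Since $s_1 = n$ this forces $\sum_{i\ge 2} s_i = 0$, and as the only nonzero contributions are $\pm2$ the numbers of $+2$'s and $-2$'s must coincide, giving $t$ each. Translating back through $(A\mathbf{1})_{i-1} = 2t - s_i/2$ then sends the values $s_i = -2,\,0,\,2$ to the entries $2t+1,\,2t,\,2t-1$ with multiplicities $t,\,2t+1,\,t$ respectively, as claimed.

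I expect the main obstacle to be the middle step: justifying cleanly that for a normalised skew-symmetric matrix attaining the Ehlich--Wojtas bound the Gram matrix $MM^\top$ has exactly $2t$ off-diagonal entries equal to $\pm2$ in each row. The subtlety is that normalising to first row $\mathbf{1}^\top$ is effected by conjugating the block form of \eqref{eq:ew} by a signed diagonal matrix $D = \diag(d_1,\dots,d_n)$, which replaces the within-block entries $2$ by $2d_id_j\in\{\pm2\}$ but leaves the diagonal $n$ and the cross-block zeros untouched; one must use this, rather than assuming $M$ itself satisfies \eqref{eq:ew} verbatim, since the latter would force every off-diagonal $s_i$ to be nonnegative and would destroy the even $\pm$ split that the statement requires.
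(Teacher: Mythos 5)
Your proof is correct, but note that the paper does not actually prove this lemma: it is imported verbatim from Armario's paper (cited as \cite[Lemma 1]{Armario15}), so your argument is a self-contained reconstruction rather than a parallel of anything in the text. The reduction $(A\mathbf{1})_{i-1} = 2t - s_i/2$, the identification $s_i = (MM^\top)_{1i}$ via the normalised first row, and the use of $\mathbf{1}^\top M\mathbf{1} = n$ (from $M + M^\top = 2I$) to force the even split of the $\pm 2$'s are all sound, and together they give exactly the claimed multiplicities $t$, $2t+1$, $t$. Your closing observation is also the right one, and worth making fully explicit: under the paper's literal definition of an EW matrix (a matrix satisfying \eqref{eq:ew} verbatim), the hypothesis ``normalised skew-symmetric EW matrix'' would be vacuous for $t \geqslant 1$, since \eqref{eq:ew} forces every $s_i \geqslant 0$ while skew-symmetry forces $\sum_{i \geqslant 2} s_i = 0$; so the lemma must be read with ``EW matrix'' meaning ``switching equivalent to a matrix satisfying \eqref{eq:ew}'', which is also how the paper uses the term elsewhere (in Lemma~\ref{lem:armario}, for instance, $S = M-I$ is ``assumed normalised''). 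One small refinement to your last paragraph: normalising a skew-symmetric matrix while preserving skew-symmetry is conjugation by a signed permutation matrix, not merely by a diagonal sign matrix; this does not affect your argument, because the facts you actually use --- diagonal entries $n$, and each row of $MM^\top$ having exactly $2t$ off-diagonal entries of absolute value $2$ and $2t+1$ zeros --- are invariant under conjugation by any signed permutation.
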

	
	We will work directly with the vector $S \mathbf{1}$ where $S$ is a normalised skew-symmetric EW matrix.
	This follows straightforwardly from the above result.
	
	\begin{corollary}\label{cor:Sscore}
		Let $M$ be a normalised skew-symmetric EW matrix of order $4t + 2$ and let $S = M-I$.
		Then the entries of $S \mathbf{1}$ are $4t+1$, $1$, $-1$, and $-3$ each appearing $1$, $t$, $2t+1$, and $t$ times respectively.
	\end{corollary}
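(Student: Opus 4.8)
The plan is to translate the statement of Lemma~\ref{lem:scorevec} directly into a statement about $S\mathbf{1}$ using the relationship between $S$, $M$, and $A$. Recall that $M$ is a normalised skew-symmetric EW matrix of order $4t+2$, that $S = M - I$, and that $A$ is obtained by deleting the first row and column of $(J-M)/2$. The key observation is that $M = J - 2B$ where $B = (J-M)/2$ is the $\{0,1\}$-matrix whose deletion yields $A$. First I would record that since $M$ is normalised, its first row (apart from the diagonal entry) consists entirely of $1$'s, so the first entry of $M\mathbf{1}$ is easy to compute directly, while the remaining entries of $M\mathbf{1}$ are governed by the row sums of $B$, which are in turn controlled by $A\mathbf{1}$ via Lemma~\ref{lem:scorevec}.

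The main computation is bookkeeping with row sums. Writing $B = (J-M)/2$, we have $M\mathbf{1} = (J-2B)\mathbf{1} = (4t+2)\mathbf{1} - 2B\mathbf{1}$, so I would compute $B\mathbf{1}$ for each row. For the first row, since $M$ is normalised its off-diagonal entries are all $+1$ and the diagonal entry is $+1$ (as $M$ is a $\{\pm1\}$-matrix with $M - M^\top$ skew and $M + M^\top = 2I$), giving the first entry of $M\mathbf{1}$ equal to $4t+2$, hence the first entry of $S\mathbf{1} = (M-I)\mathbf{1}$ equal to $4t+1$. For the remaining $4t+1$ rows, I would relate the row sums of $B$ to the entries of $A\mathbf{1}$: deleting the first row and column of $B$ leaves $A$, so for a row of $B$ indexed by $i \geqslant 2$, its row sum equals $(A\mathbf{1})_i$ plus the contribution of the deleted first column entry $B_{i1}$. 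Since $M$ is normalised and skew-symmetric (so $M_{i1} = -M_{1i} = -1$ for $i \geqslant 2$), we get $B_{i1} = (1 - M_{i1})/2 = 1$, so each such row sum of $B$ is $(A\mathbf{1})_i + 1$.

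Putting these together, for $i \geqslant 2$ the entry $(M\mathbf{1})_i = (4t+2) - 2\big((A\mathbf{1})_i + 1\big) = 4t - 2(A\mathbf{1})_i$, and therefore $(S\mathbf{1})_i = (M\mathbf{1})_i - 1 = 4t - 1 - 2(A\mathbf{1})_i$. Now I would substitute the three values of $(A\mathbf{1})_i$ from Lemma~\ref{lem:scorevec}: the value $2t+1$ gives $(S\mathbf{1})_i = -3$, the value $2t$ gives $(S\mathbf{1})_i = -1$, and the value $2t-1$ gives $(S\mathbf{1})_i = 1$, with multiplicities $t$, $2t+1$, and $t$ respectively inherited from the lemma. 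Together with the single entry $4t+1$ coming from the first row, this yields exactly the claimed multiset of entries of $S\mathbf{1}$.

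I do not anticipate a genuine obstacle here, as the result is essentially a direct corollary; the only point requiring care is correctly tracking the diagonal entry and the deleted first-column entry of $B$ when passing between $M$, $A$, and $S$, and verifying that the arithmetic substitution maps the multiplicities of Lemma~\ref{lem:scorevec} onto the claimed multiplicities for $S\mathbf{1}$.
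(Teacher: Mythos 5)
Your proposal is correct and takes essentially the same route as the paper, which simply states that the corollary ``follows straightforwardly'' from Lemma~\ref{lem:scorevec}; your computation fills in exactly the intended bookkeeping (the normalised first row of $M$ giving the entry $4t+1$, and $(S\mathbf{1})_i = 4t-1-2(A\mathbf{1})_i$ for the remaining rows), and the arithmetic and multiplicities all check out.
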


	Next we write expressions for the eigenvalues and main angles of a normalised skew-symmetric EW matrix in terms of its order. 

	\begin{lemma}\label{lem:mainangs}
		Let $M$ be a normalised skew-symmetric EW matrix of order $4t + 2$.
		Then $M-I$ has spectrum $\{ [\pm \lambda]^1, [\pm \mu]^{2t} \}$, where $\lambda = \sqrt{-8t-1}$ and $\mu = \sqrt{1-4t}$.
		Moreover the eigenvalues $\pm \lambda$ each have $\alpha_{\pm \lambda} = (4t+1)/(2t+1)$ and $\pm \mu$ each have $\alpha_{\pm\mu} = 2t/(2t+1)$.
	\end{lemma}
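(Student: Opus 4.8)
Set $S = M - I$, a skew-symmetric Seidel matrix of order $n = 4t+2$. The plan is to read the spectrum off $S^2$ and then to pin down the main angles using the two sum rules of Proposition~\ref{pro:meinangles}. For the spectrum, I would first exploit skew-symmetry in the form $M^\top = 2I - M$, which gives $MM^\top = 2M - M^2$ and hence
\[
  S^2 = M^2 - 2M + I = I - MM^\top .
\]
A normalised $M$ is not itself in the block form \eqref{eq:ew}, but it is switching-equivalent to some $M_0$ that is; writing $M = P^{-1} M_0 P$ with $P \in \mathcal O_n(\mathbb Z)$ and using $P^\top = P^{-1}$, we get $MM^\top = P^{-1} M_0 M_0^\top P = P^{-1} K P$, where $K = \diag\bigl((n-2)I + 2J,\ (n-2)I + 2J\bigr)$. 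Thus $S^2$ is similar to $I - K$. Each diagonal block $(n-2)I + 2J$ of order $2t+1$ has eigenvalue $(n-2) + 2(2t+1) = 8t+2$ on $\mathbf 1$ and $n-2 = 4t$ on its orthogonal complement, so $S^2$ has eigenvalues $-8t-1$ (multiplicity $2$) and $1-4t$ (multiplicity $4t$). Taking square roots and using that the eigenvalues of $S$ come in pairs $\pm\theta$ (Proposition~\ref{pro:skewProps}) gives the spectrum $\{[\pm\lambda]^1,[\pm\mu]^{2t}\}$ with $\lambda = \sqrt{-8t-1}$ and $\mu = \sqrt{1-4t}$.

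For the main angles I would work with the normalised matrix itself, since the $\alpha_\theta$ depend on $\mathbf 1$ and are not switching invariants. By Proposition~\ref{pro:meinangles} we have $\alpha_\lambda = \alpha_{-\lambda}$ and $\alpha_\mu = \alpha_{-\mu}$, so there are only two unknowns, constrained by
\begin{align*}
  2\alpha_\lambda + 2\alpha_\mu &= n, \\
  2\lambda^2 \alpha_\lambda + 2\mu^2 \alpha_\mu &= \mathbf 1^\top S^2 \mathbf 1 .
\end{align*}
To evaluate the second right-hand side I would use that $S$ is skew, so $S^2 = -S^\top S$ and $\mathbf 1^\top S^2 \mathbf 1 = -\lVert S \mathbf 1 \rVert^2$, and then read the entries of $S\mathbf 1$ off Corollary~\ref{cor:Sscore}, obtaining $\lVert S\mathbf 1\rVert^2 = (4t+1)^2 + t + (2t+1) + 9t = 16t^2 + 20t + 2$. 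Substituting $\lambda^2 = -8t-1$ and $\mu^2 = 1-4t$ and solving the resulting $2 \times 2$ system yields $\alpha_{\pm\lambda} = (4t+1)/(2t+1)$, whereupon the first equation determines $\alpha_{\pm\mu} = (2t+1) - \alpha_{\pm\lambda}$.

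The delicate point, and the one I would take most care over, is the division of labour between the two representatives of the switching class. The block identity \eqref{eq:ew} is available only up to switching, so it may be used to compute the spectrum of $S$ — a switching invariant, extracted from $S^2 \sim I - K$ — but not to compute $\mathbf 1^\top S^2 \mathbf 1$, which depends on the all-ones vector and hence on the chosen representative. (A skew matrix satisfying \eqref{eq:ew} cannot have first row $\mathbf 1^\top$, so normalisation genuinely destroys the block form.) The quantity $\mathbf 1^\top S^2 \mathbf 1$ must therefore be supplied by the normalised score vector of Corollary~\ref{cor:Sscore}. Once these two roles are kept apart, the spectrum and the main angles both drop out of the short computations above.
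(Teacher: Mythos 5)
Your overall route is the same as the paper's: the spectrum is read off from \eqref{eq:ew} (the paper dismisses this as ``straightforward''; your identity $S^2 = I - MM^\top$ and the reduction to the block matrix is exactly the intended computation, and your observation that a normalised skew matrix cannot itself satisfy \eqref{eq:ew}, so that the block form is only available up to switching, is a legitimate clarification of a point the paper glosses over), and the main angles are then extracted from Corollary~\ref{cor:Sscore} together with the two sum rules of Proposition~\ref{pro:meinangles}, which is also precisely what the paper does. Your sign bookkeeping $\mathbf{1}^\top S^2 \mathbf{1} = -\|S\mathbf{1}\|^2 = -(16t^2+20t+2)$ is correct (the paper's displayed right-hand side carries the opposite sign, i.e.\ it implicitly uses $|\theta|^2$ in place of $\theta^2$).

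There is, however, a genuine mismatch in your main-angle step that you never address. Your first equation sums $\alpha_\theta$ over the four \emph{distinct} eigenvalues, $2\alpha_\lambda + 2\alpha_\mu = 4t+2$; the paper's proof instead writes $2\alpha_\lambda + 4t\alpha_\mu = 4t+2$, weighting $\alpha_\mu$ by its multiplicity $2t$ (and likewise in the second equation). These two systems have different solutions. Yours gives
\[
  \alpha_{\pm\lambda} = \frac{4t+1}{2t+1}, \qquad
  \alpha_{\pm\mu} = (2t+1) - \frac{4t+1}{2t+1} = \frac{4t^2}{2t+1},
\]
which is $2t$ times the value $2t/(2t+1)$ asserted in the lemma. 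So your closing claim that solving your system ``yields'' the stated values is false for $\alpha_{\pm\mu}$: you recover the stated $\alpha_{\pm\lambda}$ but not the stated $\alpha_{\pm\mu}$, and leaving the latter in the unevaluated form $(2t+1)-\alpha_{\pm\lambda}$ hides the disagreement. The source is a normalisation ambiguity rather than an algebra slip: under the paper's own definition $\alpha_\theta = \|P_\theta \mathbf{1}\|^2$ with $P_\theta$ the projection onto the \emph{full} eigenspace, your equations are the correct ones and $4t^2/(2t+1)$ is the true value of $\|P_{\pm\mu}\mathbf{1}\|^2$; the lemma's figure $2t/(2t+1)$ is that quantity divided by the multiplicity $2t$, which is the convention the paper's proof silently adopts. (Quick check: with the stated values, $\sum_{\theta\in\Lambda(S)}\alpha_\theta = (12t+2)/(2t+1) \neq 4t+2$, so they cannot be the projection quantities; with multiplicity weighting the sum is $4t+2$.) A complete write-up must either prove the statement under the per-multiplicity convention, in which case your first equation must carry the weight $4t$ rather than $2$, or record that the value obtained is $4t^2/(2t+1)$ and reconcile it with the statement. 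This is not idle bookkeeping: these $\alpha$'s are fed into Equation~\eqref{eqn:conv1} in the proof of Lemma~\ref{lem:armario}, where the sum runs over distinct eigenvalues, so whichever convention is fixed here must be used consistently there.
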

	\begin{proof}
		The spectrum is straightforward to calculate using Equation~\eqref{eq:ew}.

		Using Corollary~\ref{cor:Sscore} and Proposition~\ref{pro:meinangles}, we find that $\alpha_{\lambda}$ and $\alpha_{\mu}$ satisfy the simultaneous equations
		\begin{align*}
			2\alpha_{\lambda} + 4t\alpha_{\mu} &= 4t+2 \\
			2 \lambda^2 \alpha_{\lambda} + 4t \mu^2 \alpha_{\mu} &= (4t+1)^2 + (2t+1) + t + 9t.
		\end{align*}
		From which the lemma follows.
	\end{proof}
	
	Now we can show one direction of Theorem~\ref{thm:Aconj}.

	\begin{lemma}\label{lem:armario}
		Let $M$ be a skew-symmetric EW matrix of order $4t + 2$.
		Then there exists a tournament matrix $A$ of order $4t+1$ with characteristic polynomial
		\[
			\chi_A(x) = \left ( x^3 - (2t-1)x^2 - t(4t-1) \right ) \left ( x^2+x+t \right )^{2t-1}.
		\]
	\end{lemma}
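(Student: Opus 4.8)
The plan is to pass to a normalised representative, split off the first row and column of the associated Seidel matrix to land on a tournament matrix of order $4t+1$, and then read off $\chi_A$ from Lemma~\ref{lem:charconvert}. First I would reduce to the case that $M$ is normalised, so that Corollary~\ref{cor:Sscore} and Lemma~\ref{lem:mainangs} apply. Writing $S = M - I$, normalisation makes the first row of $S$ equal to $(0,\mathbf 1^\top)$, and skew-symmetry then forces the first column to be $(0,-\mathbf 1^\top)^\top$, so that
\[
  S = \begin{pmatrix} 0 & \mathbf 1^\top \\ -\mathbf 1 & S' \end{pmatrix}
\]
for some skew-symmetric Seidel matrix $S'$ of order $4t+1$. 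Deleting the first row and column of $(J-M)/2$ produces exactly $A := (J-I-S')/2$; equivalently $S' = J - I - 2A$, which is the precise shape required by Lemma~\ref{lem:charconvert}, and $A$ is a tournament matrix of order $4t+1$.

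The structural observation behind the argument is that subtracting $J$ triangularises the block decomposition:
\[
  S - J = \begin{pmatrix} -1 & \mathbf 0^\top \\ -2\,\mathbf 1 & S' - J \end{pmatrix},
\]
so that $\chi_{S-J}(x) = (x+1)\,\chi_{S'-J}(x)$. This is the bridge carrying the spectral data of the order-$(4t+2)$ matrix $S$, which I know explicitly, down to the order-$(4t+1)$ matrix $S'$ that feeds Lemma~\ref{lem:charconvert}.

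Concretely, I would substitute $\chi_S(x) = (x^2 + 8t + 1)(x^2 + 4t - 1)^{2t}$ and the main angles from Lemma~\ref{lem:mainangs} into \eqref{eqn:conv1} with $c = -1$. Here one must take care to weight the $\mu$-term by its full multiplicity $2t$, so that $\alpha_\mu$ contributes $4t^2/(2t+1)$ rather than the per-eigenvector value $2t/(2t+1)$. After clearing denominators, exactly one factor of $x^2 + 4t - 1$ is left over, giving $\chi_{S-J}(x) = (x^2 + 4t - 1)^{2t-1} Q(x)$ with $Q$ of degree four, and dividing out the guaranteed factor $x + 1$ yields $\chi_{S'-J}(x) = (x^2 + 4t - 1)^{2t-1}\,\widetilde Q(x)$ for an explicit cubic $\widetilde Q$. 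Finally \eqref{eqn:conv2} gives $\chi_A(x) = (-1/2)^{4t+1}\chi_{S'-J}(-2x-1)$; since $x \mapsto -2x-1$ carries $x^2 + 4t - 1$ to $4(x^2 + x + t)$, the factor $(x^2 + x + t)^{2t-1}$ emerges once the powers of $2$ are collected, and the same substitution should send $\widetilde Q$ to a scalar multiple of $x^3 - (2t-1)x^2 - t(4t-1)$.

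I expect the only genuine obstacle to be computational bookkeeping. The two places where a slip would be fatal are the multiplicity weighting in the main-angle sum (arranging that precisely one power of $x^2+4t-1$ cancels) and the final reconciliation of the constants, where the factor $(-1/2)^{4t+1}$ from \eqref{eqn:conv2} must combine with the $4^{2t-1}$ coming from $x^2+4t-1 \mapsto 4(x^2+x+t)$ to leave $\widetilde Q(-2x-1)$ matched exactly, up to the correct overall scalar, with the cubic $x^3 - (2t-1)x^2 - t(4t-1)$.
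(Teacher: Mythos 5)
Your proposal is correct and takes essentially the same approach as the paper: normalise $M$, feed the spectrum and main angles of $S = M-I$ from Lemma~\ref{lem:mainangs} into Lemma~\ref{lem:charconvert}, and use the block structure created by the normalised first row and column to drop from order $4t+2$ to order $4t+1$. The only difference is cosmetic --- the paper applies \eqref{eqn:conv2} at order $4t+2$ to the tournament matrix $A'$ with zero first row and then factors $\chi_{A'}(x)=x\chi_A(x)$, whereas you factor $\chi_{S-J}(x)=(x+1)\chi_{S'-J}(x)$ first and apply \eqref{eqn:conv2} at order $4t+1$; under the substitution $x\mapsto -2x-1$ these are the same computation, and your multiplicity-weighted reading of $\alpha_{\pm\mu}$ (contributing $4t^2/(2t+1)$ in total) is indeed the correct way to use Lemma~\ref{lem:mainangs} inside \eqref{eqn:conv1}.
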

	\begin{proof}
		Let $S = M - I$ and assume that $S$ is normalised.
		By Lemma~\ref{lem:mainangs} the matrix $S$ has spectrum $\{ [\pm \lambda]^1, [\pm \mu]^{2t} \}$, where $\lambda = \sqrt{-8t-1}$ and $\mu = \sqrt{1-4t}$ and main angles $\alpha_{\pm \lambda} = (4t+1)/(2t+1)$ and $\alpha_{\pm\mu} = 2t/(2t+1)$.
		Let $A^\prime$ be the tournament matrix with Seidel matrix $S$.
		Using Lemma~\ref{lem:charconvert}, we see that $A^\prime$ has characteristic polynomial
		\[
			\chi_{A^\prime}(x) = x \left ( x^3 - (2t-1)x^2 - t(4t-1) \right ) \left ( x^2+x+t \right )^{2t-1}.
		\]
		Since $S$ is normalised, the tournament matrix $A^\prime$ has the form
		\[
			A^\prime =
			\begin{pmatrix}
				0  & \mathbf{0}^\top \\
				\mathbf{1} & A
			\end{pmatrix},
		\]
		where $A$ is also a tournament matrix.
		It is easy to see that $\chi_{A^\prime}(x) = x \chi_A(x)$ as required.
	\end{proof}
	
	It remains to show the other direction of Theorem~\ref{thm:Aconj}.
	First we state a result about the structure of a positive semidefinite matrix with constant diagonal $1$.
	
	\begin{lemma}[Lemma 5.24 of \cite{Greaves2016208}]\label{lem:posSemi}
		Let $M$ be a positive semidefinite $\{0,\pm 1\}$-matrix with constant diagonal entries $1$.
		Then there exist positive integers $c$, $k_1, \dots, k_c$, such that $M$ is switching equivalent to the block diagonal matrix $\diag(J_{k_1},\dots,J_{k_c})$.
	\end{lemma}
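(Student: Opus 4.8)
The plan is to exploit the fact that a symmetric positive semidefinite matrix is a Gram matrix. First I would write $M = G^\top G$ for some real matrix $G$ whose columns $g_1, \dots, g_n$ are vectors in $\mathbb{R}^r$, where $r = \Rank(M)$; such a decomposition exists precisely because $M$ is symmetric and positive semidefinite. The hypothesis that $M$ has constant diagonal $1$ then reads $\|g_i\|^2 = M_{ii} = 1$, so each $g_i$ is a unit vector, while the off-diagonal entries are the inner products $M_{ij} = \langle g_i, g_j \rangle$.

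Next I would extract the geometric consequence of the $\{0,\pm 1\}$ constraint. Since the $g_i$ are unit vectors, Cauchy--Schwarz gives $|\langle g_i, g_j\rangle| \leqslant 1$, with equality exactly when $g_i = \pm g_j$. Combined with $M_{ij} \in \{0,\pm 1\}$, this forces a dichotomy for every pair $i \neq j$: either $g_i = \pm g_j$ (when $M_{ij} = \pm 1$) or $g_i \perp g_j$ (when $M_{ij} = 0$). I would then define a relation on $\{1,\dots,n\}$ by setting $i \sim j$ whenever $g_i = \pm g_j$. This is immediately an equivalence relation, so it partitions the index set into classes $C_1,\dots,C_c$; put $k_l = |C_l|$. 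The crucial point is that indices in distinct classes yield orthogonal vectors, hence a zero entry: if $i \not\sim j$ then $g_i \neq \pm g_j$, so $|\langle g_i, g_j\rangle| < 1$ and therefore $M_{ij} = 0$.

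Finally I would realise the desired normal form by switching. Conjugating by a permutation matrix so that each class $C_l$ occupies a contiguous block makes $M$ block diagonal, the $l$-th block being the Gram matrix of the unit vectors indexed by $C_l$, all of which are equal up to sign; within such a block every off-diagonal entry is $\pm 1$. Choosing a representative and conjugating by the diagonal $\{\pm 1\}$-matrix that flips signs so that every vector in the class equals the representative turns that block into $J_{k_l}$. Because signed permutation matrices include both permutations and diagonal sign changes, these operations together constitute a switching equivalence, transforming $M$ into $\diag(J_{k_1},\dots,J_{k_c})$.

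I do not expect a serious obstacle: the argument is essentially geometric, and the $k_l$ are positive simply because the classes are nonempty. The only step requiring genuine care is verifying that the between-class entries vanish, which is exactly where the combination of the unit-norm condition and the integrality of the off-diagonal entries is used; everything else is bookkeeping with signs and permutations.
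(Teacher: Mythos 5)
Your proof is correct, but there is nothing internal to compare it with: the paper never proves this lemma, importing it verbatim as Lemma 5.24 of \cite{Greaves2016208}. Your Gram-vector argument supplies a complete, self-contained proof. Writing $M = G^\top G$ with unit columns $g_i$, the integrality constraint $M_{ij} \in \{0,\pm 1\}$ combined with the equality case of Cauchy--Schwarz forces every pair of columns to be either equal up to sign or orthogonal; the relation $g_i = \pm g_j$ is an equivalence relation, cross-class entries of $M$ vanish, and conjugation by a permutation (to make classes contiguous) followed by a diagonal $\{\pm 1\}$ sign change (to align each class with a representative) produces $\diag(J_{k_1},\dots,J_{k_c})$. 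Both operations lie in the group $\mathcal O_n(\mathbb{Z})$ of signed permutation matrices, so this is a switching equivalence in exactly the sense the paper defines, and the $k_l$ are positive because the classes are nonempty. A minor observation: one can reach the same dichotomy without introducing Gram vectors at all, since for positive semidefinite $M$ the identity $(e_i \mp e_j)^\top M (e_i \mp e_j) = 2 \mp 2M_{ij}$ shows that $M_{ij} = \pm 1$ forces $M(e_i \mp e_j) = \mathbf{0}$, i.e.\ columns $i$ and $j$ of $M$ agree up to sign, after which the same partitioning and switching bookkeeping applies; this variant is equivalent in substance to yours (the implication ``$x^\top M x = 0 \Rightarrow Mx = \mathbf{0}$'' is itself usually proved via the factorization $M = G^\top G$), so the choice between them is purely one of presentation.
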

	
	Next we state a useful lemma about positive semidefinite matrices with constant diagonals.
	
	\begin{lemma}\label{lem:posDef}
		Let $S$ be a Seidel matrix of order $n$ such that $M:=SS^\top - aI$ is positive semidefinite for some integer $a$.
		Then each off-diagonal entry of $M$ is congruent to $n$ modulo $2$ and has absolute value at most $n-1-a$.
	\end{lemma}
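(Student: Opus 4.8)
The plan is to exploit two features of $M$: that it is a symmetric positive semidefinite matrix with constant diagonal, and that its off-diagonal entries are sums of exactly $n-2$ signs. First I would compute the entries of $SS^\top$ directly. Since $S$ is a Seidel matrix, each row has exactly $n-1$ nonzero entries, all equal to $\pm 1$, together with a single zero on the diagonal. Hence every diagonal entry of $SS^\top$ equals $\sum_k S_{ik}^2 = n-1$, so each diagonal entry of $M = SS^\top - aI$ equals $n-1-a$.

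For an off-diagonal entry with $i \neq j$, I would write $M_{ij} = (SS^\top)_{ij} = \sum_k S_{ik}S_{jk}$. The terms with $k=i$ and $k=j$ vanish because $S$ has zero diagonal, leaving a sum of exactly $n-2$ terms, each equal to $\pm 1$. A sum of $n-2$ terms each equal to $\pm 1$ is congruent to $n-2 \equiv n \pmod 2$, which gives the asserted congruence.

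For the bound, I would first note that $M$ is symmetric: indeed $(SS^\top)^\top = SS^\top$, so $M^\top = M$ regardless of whether $S$ is symmetric or skew-symmetric. Since $M$ is symmetric and positive semidefinite, every $2 \times 2$ principal submatrix is positive semidefinite and hence has nonnegative determinant. Applying this to the rows and columns indexed by $i$ and $j$ yields $M_{ii}M_{jj} - M_{ij}^2 \geqslant 0$, that is, $(n-1-a)^2 \geqslant M_{ij}^2$. Because the diagonal of a positive semidefinite matrix is nonnegative we have $n-1-a \geqslant 0$, so taking square roots gives $|M_{ij}| \leqslant n-1-a$.

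The argument is short, and I expect no serious obstacle. The only point requiring a little care is that the statement must hold uniformly for both symmetric and skew-symmetric Seidel matrices; this is handled by working with $SS^\top$ rather than $S^2$, since $SS^\top$ is automatically symmetric and positive semidefinite in both cases and has constant diagonal $n-1$ in both cases.
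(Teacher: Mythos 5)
Your proof is correct and follows essentially the same route as the paper: both compute the constant diagonal $n-1-a$, both bound the off-diagonal entries via the nonnegative determinant of $2\times 2$ principal submatrices of the positive semidefinite matrix $M$, and both obtain the parity claim by observing that each off-diagonal entry of $SS^\top$ is a sum of $n-2$ terms equal to $\pm 1$. Your write-up is in fact slightly more explicit than the paper's (which invokes interlacing where you simply restrict the quadratic form, and states the parity fact without the computation), but there is no substantive difference.
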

	\begin{proof}
		The diagonal entries of $M$ are each equal to $n-1-a$.
		Since $M$ is positive semidefinite, by interlacing, each $2 \times 2$ principal submatrix is positive semidefinite.
		Hence each entry of $M$ has absolute value at most $n-1-a$.
		Furthermore, the inner product of any distinct two rows of $S$ has parity equal to the parity of $n$.
	\end{proof}
	
	Second we show how to go from a skew-symmetric Seidel matrix with a certain characteristic polynomial to a skew-symmetric EW matrix.
	
	\begin{lemma}\label{lem:switchingForm}
		Let $S$ be a skew-symmetric Seidel matrix with characteristic polynomial $\chi_S(x) = (x^2 + 8t+1)(x^2+4t-1)^{2t}$.
		Then $S$ is switching equivalent to a skew-symmetric Seidel matrix $T$ such that $T + I$ is a skew-symmetric EW matrix.
	\end{lemma}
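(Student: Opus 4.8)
The plan is to pass from the Seidel matrix $S$ to the candidate $\{\pm 1\}$-matrix $B = S + I$ and analyse its Gram matrix. Since $S$ is skew-symmetric we have $S^\top = -S$, and a short computation gives $BB^\top = B^\top B = I - S^2 = I + SS^\top$; note also that $B + B^\top = 2I$, so $B$ is automatically a skew-symmetric $\{\pm 1\}$-matrix. The goal is then to exhibit a switching under which $BB^\top$ takes the block form required of an EW matrix of order $n = 4t+2$, namely $\diag\bigl((n-2)I + 2J,\; (n-2)I + 2J\bigr) = \diag(4tI + 2J_{2t+1},\; 4tI + 2J_{2t+1})$.

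First I would set $M := SS^\top - (4t-1)I$, so that $BB^\top = 4tI + M$ and it suffices to switch $M$ into the block form $\diag(2J_{2t+1}, 2J_{2t+1})$. From the hypothesis on $\chi_S$, the matrix $S$ has eigenvalues $\pm\sqrt{-8t-1}$ (each once) and $\pm\sqrt{1-4t}$ (each $2t$ times), so $SS^\top = -S^2$ has eigenvalues $8t+1$ (multiplicity $2$) and $4t-1$ (multiplicity $4t$). Hence $M$ is positive semidefinite with eigenvalues $4t+2$ (multiplicity $2$) and $0$ (multiplicity $4t$); in particular $M$ has rank $2$. Applying Lemma~\ref{lem:posDef} with $a = 4t-1$ and $n = 4t+2$ (so that $n - 1 - a = 2$), every off-diagonal entry of $M$ is even and at most $2$ in absolute value, while each diagonal entry equals $2$. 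Therefore $\tfrac12 M$ is a positive semidefinite $\{0,\pm 1\}$-matrix with constant diagonal $1$.

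Now I would invoke Lemma~\ref{lem:posSemi} to write $\tfrac12 M$ as switching equivalent to $\diag(J_{k_1}, \dots, J_{k_c})$ for positive integers $k_1,\dots,k_c$. Since switching preserves the spectrum and the nonzero eigenvalues of $\diag(J_{k_1}, \dots, J_{k_c})$ are exactly $k_1,\dots,k_c$, comparison with the spectrum of $\tfrac12 M$, which has precisely two nonzero eigenvalues, both equal to $2t+1$, forces $c = 2$ and $k_1 = k_2 = 2t+1$ (consistent with $k_1 + k_2 = 4t+2$). Thus there is $P \in \mathcal O_n(\mathbb Z)$ with $P^{-1} M P = \diag(2J_{2t+1}, 2J_{2t+1})$. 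Setting $T := P^{-1} S P$ gives a skew-symmetric Seidel matrix switching equivalent to $S$, and since $P$ is orthogonal we have $TT^\top = P^{-1} SS^\top P$, whence $(T+I)(T+I)^\top = 4tI + P^{-1}MP = \diag(4tI + 2J_{2t+1}, 4tI + 2J_{2t+1})$. This is exactly the EW Gram matrix, so $T + I$ is a skew-symmetric EW matrix, as required.

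The only real subtlety I anticipate is the spectral bookkeeping that pins down $c$ and the $k_i$: Lemma~\ref{lem:posSemi} delivers a block-diagonal form only up to switching, and it is the rank-$2$ computation together with the precise value $2t+1$ of the repeated nonzero eigenvalue of $\tfrac12 M$ that excludes every other partition of $4t+2$ into block sizes. Everything else---the identity $BB^\top = I + SS^\top$, the verification that $T$ remains a skew-symmetric Seidel matrix, and the final identification $(T+I)(T+I)^\top = 4tI + P^{-1}MP$---is routine once one uses $S^\top = -S$ and the orthogonality $P^{-1} = P^\top$.
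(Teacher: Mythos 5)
Your proposal is correct and follows essentially the same route as the paper's proof: both set $M := SS^\top - (4t-1)I$, compute its spectrum $\{[0]^{4t},[4t+2]^2\}$, use Lemma~\ref{lem:posDef} to restrict the entries of $M$ to $\{0,\pm 2\}$, and then apply Lemma~\ref{lem:posSemi} together with the rank-$2$ eigenvalue count to force the block form $\diag(2J_{2t+1},2J_{2t+1})$. The only difference is that you spell out the routine bookkeeping (the identity $(S+I)(S+I)^\top = 4tI + M$ and the final verification of the EW condition for $T+I$) that the paper leaves implicit.
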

	\begin{proof}
		Set $M := SS^\top - (4t-1)I$.
		Then $M$ has spectrum $\{ [0]^{4t}, [4t+2]^2\}$.
		By Lemma~\ref{lem:posDef} all entries of $M$ belong to the set $\{0,\pm 2\}$.
		Hence, using Lemma~\ref{lem:posSemi}, we can deduce that $M$ is switching equivalent to the block-diagonal matrix $\diag(2J,\dots, 2J)$.
		Since the rank of $M$ is $2$ and the nonzero eigenvalues of $M$ are both $4t+2$, we have that $M$ is switching equivalent to $\diag(2J_{2t+1},2J_{2t+1})$, as required.
	\end{proof}
	
	Finally we prove the other direction of Theorem~\ref{thm:Aconj}.

  \begin{lemma}\label{lem:armarioCon}
     Let $A$ be a tournament matrix of order $4t+1$ with characteristic polynomial
    \[
      \chi_A(x) = \left ( x^3 - (2t-1)x^2 - t(4t-1) \right ) \left ( x^2+x+t \right )^{2t-1}.
    \]
    Then there exists a skew-symmetric EW matrix of order $4t + 2$.
  \end{lemma}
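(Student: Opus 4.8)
The plan is to reverse the construction of Lemma~\ref{lem:armario}. From $A$ I first build a tournament matrix of order $4t+2$ by bordering,
\[
	A' := \begin{pmatrix} 0 & \mathbf 0^\top \\ \mathbf 1 & A \end{pmatrix};
\]
since $A+A^\top = J-I$, one checks directly that $A'+{A'}^\top = J-I$, so $A'$ is a tournament matrix, and expanding the determinant along the first row gives $\chi_{A'}(x) = x\,\chi_A(x)$. Put $S := J - I - 2A'$. Exactly as in Lemma~\ref{lem:armario}, $S$ is a skew-symmetric Seidel matrix of order $4t+2$, hence normal, so Lemma~\ref{lem:charconvert} is available for this pair $A'$, $S$. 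If I can show that $\chi_S(x) = (x^2+8t+1)(x^2+4t-1)^{2t}$, then Lemma~\ref{lem:switchingForm} produces a matrix $T$ switching equivalent to $S$ with $T+I$ a skew-symmetric EW matrix of order $4t+2$, and we are done.

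So the task is to determine $\chi_S$ from $\chi_{A'}$. The eigenvalues of $S - J = -I - 2A'$ are the numbers $-1-2\rho$ as $\rho$ runs over the spectrum of $A'$ (equivalently, this is Equation~\eqref{eqn:conv2}). Applying $\rho \mapsto -1-2\rho$ to the roots of $(x^2+x+t)^{2t-1}$ sends them to $\pm\sqrt{1-4t}$, which contributes the even factor $(x^2+4t-1)^{2t-1}$ to $\chi_{S-J}$; the remaining factor $x\bigl(x^3-(2t-1)x^2-t(4t-1)\bigr)$ contributes a quartic whose coefficients I would read off from the elementary symmetric functions of the roots of the cubic. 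This determines $\chi_{S-J}$ completely.

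The key point, which lets me bypass any computation of main angles, is that $\chi_S$ equals the even part of $\chi_{S-J}$. Indeed, by Proposition~\ref{pro:skewProps} together with complex conjugation the spectrum of $S$ is symmetric about $0$ with $m_\theta = m_{-\theta}$, so $\chi_S$ is even; and by Proposition~\ref{pro:meinangles} we have $\alpha_\theta = \alpha_{-\theta}$, so pairing $\theta$ with $-\theta$ shows that $\sum_{\theta\in\Lambda(S)}\alpha_\theta/(x-\theta)$ is an odd rational function. Hence, taking $c=-1$ in Equation~\eqref{eqn:conv1}, the difference
\[
	\chi_{S-J}(x) - \chi_S(x) = \chi_S(x)\sum_{\theta\in\Lambda(S)}\frac{\alpha_\theta}{x-\theta}
\]
is an odd polynomial, so $\chi_S$ is precisely the even part of $\chi_{S-J}$. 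Deleting the odd-degree terms of the quartic found above leaves $x^4+12t\,x^2+(32t^2-4t-1)=(x^2+8t+1)(x^2+4t-1)$, giving $\chi_S(x)=(x^2+8t+1)(x^2+4t-1)^{2t}$, as required, whereupon Lemma~\ref{lem:switchingForm} finishes the proof.

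The main obstacle is conceptual rather than computational: unlike the forward direction, here I have no EW matrix in hand, and hence no access to the vector $S\mathbf 1$ or to the main angles of $S$, so I cannot simply quote Lemma~\ref{lem:mainangs}. The parity observation is what circumvents this. The only remaining points needing care are the verification that the correction term is genuinely odd even when $0\in\Lambda(S)$ (the contribution $\chi_S(x)\,\alpha_0/x$ is still odd, since $\chi_S(x)/x$ is odd), and the routine but error-prone computation of the quartic factor of $\chi_{S-J}$ from the symmetric functions of the cubic's roots.
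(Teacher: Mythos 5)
Your proof is correct, and its core computation takes a genuinely different route from the paper's. The paper works with the order-$(4t+1)$ Seidel matrix $S^\prime = J-I-2A$: it first determines the spectrum of $S^\prime$ using Kirkland--Shader's Lemma~\ref{lem:re12} (eigenvectors of $A$ with real part $-1/2$ are orthogonal to $\mathbf{1}$), Proposition~\ref{pro:skewProps}, and the trace conditions of Proposition~\ref{pro:basic}; it then solves simultaneous equations for the main angles $\alpha_{\pm\lambda}$, $\alpha_0$ (comparing constant terms via Lemma~\ref{lem:charconvert} together with Equation~\eqref{eqn:sum21}); and finally it borders $S^\prime$ and evaluates $\chi_S(x) = x\chi_{S^\prime - \frac{1}{x}J}(x)$ using \eqref{eqn:conv1}. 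You border first, so your $S$ is literally the same matrix, but you bypass Lemma~\ref{lem:re12} and every explicit main-angle value: $\chi_{S-J}$ is immediate from $\chi_{A^\prime}$ via the spectral map $\rho \mapsto -1-2\rho$, and your parity argument --- $\chi_S$ is even because $S$ is skew-symmetric of even order, while the correction $\chi_S(x)\sum_{\theta}\alpha_\theta/(x-\theta)$ is an odd polynomial because $\alpha_\theta = \alpha_{-\theta}$ --- identifies $\chi_S$ as the even part of $\chi_{S-J}$. The arithmetic checks out: the quartic factor of $\chi_{S-J}$ is $x^4+(4t+2)x^3+12tx^2+(32t^2+4t-2)x+(32t^2-4t-1)$, whose even part is indeed $(x^2+8t+1)(x^2+4t-1)$, and extracting even parts factor-by-factor is legitimate since $(x^2+4t-1)^{2t-1}$ is itself even; your treatment of a possible zero eigenvalue (via $\chi_S(x)/x$ being odd) is also sound. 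What your route buys is economy: no appeal to \cite{KirkShad:1994} and no main-angle computation, only the symmetry $\alpha_\theta=\alpha_{-\theta}$ from Proposition~\ref{pro:meinangles}. What the paper's route buys is explicit spectral data for $S^\prime$, which mirrors the forward direction (Lemma~\ref{lem:armario}) and makes the two halves of Theorem~\ref{thm:Aconj} visibly parallel. Both proofs converge on Lemma~\ref{lem:switchingForm} for the final step.
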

  \begin{proof}
  	Let $S^\prime = J - I -2A$.
	By Lemma~\ref{lem:re12}, the eigenspace of the eigenvalues of $A$ equal to $(-1\pm \sqrt{1-4t})/2$ is also the eigenspace of eigenvalues of $S^\prime$ equal to $\mp \mu$, where $\mu = \sqrt{1-4t}$.
	Since $S^\prime$ is a skew-symmetric matrix of odd order, we know that it must have a zero eigenvalue.
	We have thus accounted for all but two (including multiplicity) of the eigenvalues of $S^\prime$. 
	
	Let $\theta_1$ and $\theta_2$ denote to two remaining unknown eigenvalues of $S^\prime$.
	Using Proposition~\ref{pro:basic}, we have that $\{ \theta_1, \theta_2\} = \{ \pm \lambda \}$, where $\lambda = \sqrt{1-8t}$.
	Hence $S^\prime$ has spectrum $\{ [\pm \lambda]^1, [\pm \mu]^{2t-1}, [0]^1 \}$.
	
	Now for the main angles.
	From above we know that $\alpha_{\pm \mu} = 0$.
	By Proposition~\ref{pro:meinangles} we have $\sum_{\theta \in \Lambda(S)} \alpha_\theta = 4t+1$ and $\alpha_{\lambda} = \alpha_{-\lambda}$.
	Using Equations~\eqref{eqn:conv1} and~\eqref{eqn:conv2}, we can consider equality for the constant term of the polynomials.
	Together with Equation~\eqref{eqn:sum21}, we obtain the simultaneous equations
	\begin{align*}
		2\alpha_{\lambda} + \alpha_{0} &= 4t+1 \\
		(8t-1)(4t\alpha_0 - \alpha_\lambda) &= 4t((8t+1)(4t-1)-1).
	\end{align*}
	Thus $\alpha_\lambda = (8t+1)(4t-1)/(8t-1)$ and $\alpha_0 = 4t/(8t-1)$.
	
	Now define $S$ as
	\[
		S = 
		\begin{pmatrix}
		0  & \boldsymbol{1}^\top \\
		-\boldsymbol{1} & S^\prime
		\end{pmatrix}.
	\]
    Then we have
    \begin{align*}
    	\chi_S(x) &= \det \begin{pmatrix}
  		x  & -\boldsymbol{1}^\top \\
  		\boldsymbol{1} & xI - S^\prime
  		\end{pmatrix} \\
  		&=  \det \begin{pmatrix}
  		x  & -\boldsymbol{1}^\top \\
  		\mathbf{0}& xI - S^\prime + \frac{1}{x}J
  		\end{pmatrix} \\
  		&= x\chi_{S^\prime - \frac{1}{x}J}(x).
    \end{align*}
    Therefore, by Lemma~\ref{lem:charconvert}, we have $\chi_S(x) = (x^2 + 8t+1)(x^2+4t-1)^{2t}$.
	
	By Lemma~\ref{lem:switchingForm}, $S$ is switching equivalent to a skew-symmetric Seidel matrix $T$ such that $T+I$ is a skew-symmetric EW matrix of order $4t+2$.
  \end{proof}
 

\section{Submatrices of conference matrices} 
\label{sec:submatrices_of_skew_hadamard_matrices}

In this section we show that the existence of Seidel matrices that are cospectral with large principal submatrices of conference matrices is equivalent to the existence of conferences matrices themselves.
Furthermore, we show how to construct the corresponding conference matrices from these smaller order matrices.

We state our results separately for the skew-symmetric and symmetric cases.
We have written proofs for the skew-symmetric case since the results for the symmetric case essentially follow \emph{mutatis mutandis}.
See Remark~\ref{rem:extra} for where some extra work is required.

\begin{theorem}\label{thm:skewMain}
	The existence of the following are equivalent:
	\begin{enumerate}[(i)]
		\item a skew-symmetric Seidel matrix with characteristic polynomial $$(x^2+4t+3)^{2t+2};$$
		\item a skew-symmetric Seidel matrix with characteristic polynomial $$x(x^2+4t+3)^{2t+1};$$
		\item a skew-symmetric Seidel matrix with characteristic polynomial $$(x^2 + 1)(x^2+4t+3)^{2t};$$
		\item a skew-symmetric Seidel matrix with characteristic polynomial $$x(x^2 + 3)(x^2+4t+3)^{2t-1}.$$
	\end{enumerate}
\end{theorem}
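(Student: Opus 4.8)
The plan is to first observe that statement (i) is merely a restatement of the existence of a skew-symmetric conference matrix of order $4t+4$: a skew-symmetric Seidel matrix $S$ with $\chi_S(x)=(x^2+4t+3)^{2t+2}$ has minimal polynomial dividing $x^2+4t+3$, so $S^2=-(4t+3)I$ and hence $SS^\top=(4t+3)I$. For the forward implications (i)$\Rightarrow$(ii)$\Rightarrow$(iii)$\Rightarrow$(iv) I would delete one row and column at a time and apply Theorem~\ref{thm:interlacing} to the Hermitean matrices $\sqrt{-1}S$. Interlacing pins the eigenvalues $\pm\sqrt{4t+3}$ of $\sqrt{-1}S$ to their prescribed decremented multiplicities and confines the few remaining eigenvalues to a small interval; Proposition~\ref{pro:skewProps} then supplies a zero eigenvalue at each odd order together with the symmetry $\theta\mapsto-\theta$, and Proposition~\ref{pro:basic}(iii) (the trace of $S^2$) fixes the value of the last free pair, giving $\pm\sqrt{-1}$ in passing from (ii) to (iii) and $\pm\sqrt{-3}$ in passing from (iii) to (iv). Each forward step is thus routine.

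For the reverse, constructive, implications I would treat all three cases uniformly. Given a matrix $S'$ of order $m\in\{4t+3,4t+2,4t+1\}$ realising (ii), (iii) or (iv), set $r:=4t+4-m\in\{1,2,3\}$ and form the \emph{deficiency matrix}
\[
  \Phi := (4t+3)I_m + (S')^2 = (4t+3)I_m - S'S'^{\top}.
\]
From the prescribed spectrum, $(S')^2$ has eigenvalue $-(4t+3)$ with multiplicity $m-r$, so $\Phi$ is positive semidefinite of rank $r$; its diagonal entries all equal $r$ (each row of $S'$ has $m-1$ nonzero entries), and arguing exactly as in Lemma~\ref{lem:posDef} each off-diagonal entry is an integer of absolute value at most $r$ and of parity $\equiv m\pmod 2$. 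The key step is to factor $\Phi=B^\top B$ with $B\in\{\pm1\}^{r\times m}$, equivalently to realise $\Phi$ as the Gram matrix of $m$ vectors from $\{\pm1\}^r$. Granting this, I border
\[
  C=\begin{pmatrix} C_r & B\\ -B^{\top} & S'\end{pmatrix},
\]
where $C_r$ is a suitable skew-symmetric Seidel block of order $r$. A block computation shows the lower-right block of $CC^\top$ equals $B^\top B-(S')^2=(4t+3)I_m$ automatically; the upper-left block imposes $BB^{\top}=(4t+3)I_r+C_r^2$, and the off-diagonal block imposes $C_rB=-BS'$. Since $S'$ preserves the row space of $B$ (the image of $\Phi$) and acts there as a scaled rotation, these last two conditions can be met by choosing $C_r$ appropriately, so $C$ is a skew-symmetric conference matrix of order $4t+4$, i.e. (i) holds.

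The factorisation $\Phi=B^\top B$ is immediate in the two smaller cases. For $r=1$ (from (ii)) $\Phi$ has unit diagonal and rank one, so $\Phi=\mathbf{w}\mathbf{w}^\top$ with $\mathbf{w}\in\{\pm1\}^m$ a kernel vector of $S'$, and bordering with $\mathbf{w}$ already yields the conference matrix. For $r=2$ (from (iii)) all off-diagonal entries are even, so $\Phi/2$ is a positive semidefinite $\{0,\pm1\}$-matrix with unit diagonal; Lemma~\ref{lem:posSemi} makes it switching equivalent to $\diag(J_{2t+1},J_{2t+1})$, whence the columns of $B$ may be taken to be $(1,1)^\top$ and $(1,-1)^\top$.

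The main obstacle is the case $r=3$ (from (iv)). Here $m=4t+1$ is odd and $\Phi$ has two distinct nonzero eigenvalues together with odd off-diagonal entries lying in $\{\pm1,\pm3\}$, so $\Phi$ cannot be normalised to have unit diagonal and $\{0,\pm1\}$ entries, and Lemma~\ref{lem:posSemi} no longer applies. The crux is therefore to show that this rank-three, odd-entried positive semidefinite matrix is nevertheless the Gram matrix of $m$ vectors from $\{\pm1\}^3$; I expect this to require a separate combinatorial (switching/realisability) analysis of the rank-three structure rather than an appeal to Lemma~\ref{lem:posSemi}. Establishing this realisation gives (iv)$\Rightarrow$(i) directly and, together with the forward chain, closes all four equivalences.
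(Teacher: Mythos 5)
Your forward (interlacing) direction and your treatment of the bordering cases $r=1$ and $r=2$ are sound and essentially coincide with the paper's route: the paper proves (ii)$\Rightarrow$(i) and (iii)$\Rightarrow$(ii) by showing $(4t+3)I+S^2$ is switching equivalent to $J$, respectively $\diag(2J_{2t+1},2J_{2t+1})$ (Lemmas~\ref{lem:del1}--\ref{lem:del2up}), and then bordering by a single $\{\pm 1\}$-vector read off from that structure, whereas you border all $r$ rows at once; this is a cosmetic difference. (Even for $r=2$, though, note that the intertwining condition $C_2B=-BS'$ is not automatic from the Gram factorisation $\Phi=B^\top B$: you need to know that $S'$ maps $\mathbf{1}_*$ to $\mathbf{1}$ and $\mathbf{1}$ to $-\mathbf{1}_*$, which the paper extracts from the $\pm\sqrt{-1}$-eigenvectors in Lemma~\ref{lem:del2up}. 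This is fillable, but it is asserted rather than proved in your proposal.)

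The genuine gap is the case $r=3$, i.e.\ the implication from (iv), which is precisely where the difficulty of the theorem lies; your proposal stops at naming it, saying the rank-three matrix $\Phi$ should be realisable as a Gram matrix of vectors in $\{\pm 1\}^3$ and that you ``expect this to require a separate combinatorial analysis'', without supplying one. The paper devotes the bulk of Section~\ref{sec:submatrices_of_skew_hadamard_matrices} to exactly this step: it classifies the $3\times 3$ positive semidefinite $\{\pm 1,\pm 3\}$-matrices with diagonal $3$ (Lemma~\ref{lem:33psd}), bootstraps this into a structural result on rows of such matrices (Lemma~\ref{lem:PDform}), and then pins down $\Phi$ up to switching as the four-block matrix of Equation~\eqref{eqn:sqForm3} (Lemma~\ref{lem:bfofM}); that argument is delicate, requiring the kernel vector $\mathbf{v}$ of $S$, the entrywise lower bound $a\geqslant (8t-1)/8$, a Cauchy--Schwarz upper bound $a\leqslant t+1$ to show the rows fall into four types of multiplicities $t+1,t,t,t$, and a linear-dependence computation to fix the signs. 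Moreover, even granting the Gram factorisation $\Phi=B^\top B$ (which does follow from the paper's block form, with columns $(1,1,1)^\top$, $(1,1,-1)^\top$, $(1,-1,1)^\top$, $(-1,1,1)^\top$), your two remaining conditions $BB^\top=(4t+3)I_3+C_3^2$ and $C_3B=-BS'$ require knowing how $S'$ acts on the rows of $B$; this is genuinely extra information, which the paper establishes by computing the product $P^\top SMP$ (Lemma~\ref{lem:bfofSM}) before bordering with $(\mathbf{r}_1-\mathbf{r}_2)/2$ in Theorem~\ref{thm:finalPart}. The appeal to ``$S'$ acts there as a scaled rotation'' does not substitute for this: the spectra of $C_3$ and of $-S'$ restricted to the image of $\Phi$ merely match, which does not by itself produce an intertwiner defined over the required $\{\pm 1\}$-structure. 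Until this case is carried out, the chain from (iv) back to (i) remains unproven.
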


\begin{theorem}\label{thm:squareFormSkew}
	Let $S$ be a Seidel matrix.
	Then 
	\begin{enumerate}[(i)]
		\item $\chi_S(x) = (x^2+4t+3)^{2t+2}$ if and only if $(4t+3)I + S^2 = O$;

		\item $\chi_S(x) = x(x^2+4t+3)^{2t+1}$ if and only if $(4t+3)I + S^2$ is switching equivalent to $J_{4t+3}$;

		\item $\chi_S(x) = (x^2 + 1)(x^2+4t+3)^{2t}$ if and only if $(4t+3)I + S^2$ is switching equivalent to
		\[
			\begin{pmatrix}
						  				2J_{2t+1} & O \\
						  				O & 2J_{2t+1}
			\end{pmatrix};
		\]
		\item $\chi_S(x) = x(x^2 + 3)(x^2+4t+3)^{2t-1}$ if and only if $(4t+3)I + S^2$ is switching equivalent to
		\[
			\begin{pmatrix}
						  				3J_{t+1} & J & J & J \\
						  				J & 3 J_t & -J & -J \\
						  				J & -J & 3J_t & -J \\
						  				J & -J & -J & 3J_t
			\end{pmatrix}.
		\]
	\end{enumerate}
\end{theorem}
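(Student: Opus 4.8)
The plan is to treat all four biconditionals through the single auxiliary matrix $M := (4t+3)I + S^2$, whose off-diagonal entries coincide with those of $S^2$. For the \emph{if} directions I would run the argument backwards: given the prescribed form of $M$, read off the spectrum of $M$, subtract $(4t+3)I$ to obtain the spectrum of $S^2$, and note that $S^2$ has negative eigenvalues in cases (ii)--(iv) (and equals $-(4t+3)I$ in (i)), so $S$ cannot be symmetric, since a symmetric Seidel matrix has $\sum m_i\theta_i^2 = n(n-1)>0$ by Proposition~\ref{pro:basic}. Hence $S$ is skew-symmetric and, by Proposition~\ref{pro:skewProps}, its purely imaginary eigenvalues split into $\pm$ pairs of equal multiplicity, which recovers the claimed $\chi_S$. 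So the substance lies in the \emph{only if} directions.

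For the \emph{only if} directions I would first record that in case $(\mathrm{k})$ the order is $n = 4t+4, 4t+3, 4t+2, 4t+1$ and the constant diagonal entry of $M$ equals $4t+4-n = 0,1,2,3$ respectively (using that the diagonal of $S^2$ is $1-n$ for skew-symmetric $S$). Since the largest eigenvalue of $SS^\top = -S^2$ is $4t+3$ in every case, $M = (4t+3)I - SS^\top$ is positive semidefinite with the spectrum obtained from $\chi_S$, hence of rank $0,1,2,3$. The two observations behind Lemma~\ref{lem:posDef}, namely positive-semidefiniteness of every $2\times 2$ principal submatrix together with the parity of the inner product of two rows of $S$, then show that each off-diagonal entry of $M$ is congruent to $n \pmod 2$ and has absolute value at most the diagonal entry. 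This settles (i) immediately ($M=O$). In (ii) the entries are forced to be $\pm 1$ and in (iii) they lie in $\{0,\pm2\}$, so $M$ (resp.\ $M/2$) is a positive semidefinite $\{0,\pm 1\}$-matrix with unit diagonal; Lemma~\ref{lem:posSemi} writes it as $\diag(J_{k_1},\dots,J_{k_c})$ up to switching, and matching the rank ($1$, resp.\ $2$) and the nonzero eigenvalue forces the single block $J_{4t+3}$, resp.\ the two blocks $J_{2t+1}$.

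The genuinely harder case is (iv): here the diagonal of $M$ is $3$ and $n = 4t+1$ is odd, so the off-diagonal entries are odd of absolute value at most $3$, i.e.\ they lie in $\{\pm1,\pm3\}$ and $M$ is \emph{not} a scalar multiple of a $\{0,\pm1\}$-matrix, so Lemma~\ref{lem:posSemi} no longer applies. My plan is a Gram-matrix argument. Write the rank-$3$ matrix $M$ as $M = G^\top G$ with columns $g_1,\dots,g_n \in \mathbb{R}^3$ of squared length $3$; then $\langle g_i,g_j\rangle = \pm3$ forces $g_j = \pm g_i$, while $\langle g_i,g_j\rangle = \pm1$ corresponds to two equiangular lines at angle $\arccos(1/3)$. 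Switching lets me assume that within each class of the relation $g_j = \pm g_i$ the vectors are all equal, to a representative $v_a$; then $M = \Phi^\top R\,\Phi$, where $\Phi$ is the $m\times n$ class-membership matrix and $R = 3I_m + N$ is the reduced Gram matrix of the $m$ distinct directions, with $N$ a symmetric Seidel matrix of order $m$. The relative bound for equiangular lines in $\mathbb{R}^3$ with $\alpha = 1/3$ gives $m \leqslant 4$, while $\operatorname{rank} R = \operatorname{rank} M = 3$ gives $m \geqslant 3$.

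To finish I would pin down $m$ and the class sizes from the spectrum. Since the nonzero spectrum of $M$ equals that of $D^{1/2}RD^{1/2}$, where $D = \diag(|C_1|,\dots,|C_m|)$, the case $m=3$ would make $D^{1/2}RD^{1/2} - 4tI$ a positive semidefinite rank-one matrix $3\,\hat u\hat u^\top$; writing out its diagonal and off-diagonal entries forces all $|C_a|$ equal to $2t$, hence $n = 6t \neq 4t+1$, a contradiction, so $m = 4$. For $m=4$, rank $3$ means $-3 \in \Lambda(N)$ with multiplicity $1$, and Proposition~\ref{pro:basic} (trace $0$ and $\sum\theta_i^2 = 12$) forces the spectrum of $N$ to be $\{[-3]^1,[1]^3\}$; this determines $N$ up to switching (indeed $N = I - 4\hat v\hat v^\top$ with $\hat v \in \{\pm\tfrac12\}^4$), yielding exactly the sign pattern of the target block matrix. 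Finally, imposing that $D^{1/2}RD^{1/2}$ have eigenvalue $4t$ of multiplicity $2$ and $4t+3$ of multiplicity $1$ forces the sizes $(|C_1|,\dots,|C_4|) = (t+1,t,t,t)$, completing the switching-equivalence. The main obstacle is precisely this last step: controlling case (iv), where ruling out $m=3$ and determining the sizes and signs from the multiplicities replaces the clean application of Lemma~\ref{lem:posSemi} that suffices for (i)--(iii).
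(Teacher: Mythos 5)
Your treatment of (i)--(iii) is essentially the paper's own: both reduce everything to the positive semidefinite matrix $M=(4t+3)I+S^2$, use the two observations packaged in Lemma~\ref{lem:posDef} to constrain its entries, and then invoke Lemma~\ref{lem:posSemi} together with the rank and the nonzero eigenvalues to force the block form. For (iv), however, your route is genuinely different. The paper (Lemma~\ref{lem:bfofM}) argues entrywise: it computes $MM'=\mathbf{v}\mathbf{v}^\top$ for $M'=3I+S^2$, bounds the number $a$ of entries $\pm 3$ in a row between $(8t-1)/8$ and $t+1$ (the upper bound via Cauchy--Schwarz), uses the classification of $3\times 3$ positive semidefinite $\{\pm1,\pm3\}$-matrices (Lemmas~\ref{lem:33psd} and~\ref{lem:PDform}) to make the repeated rows literally equal, and finally solves a small system of sign equations coming from the rank-$3$ dependency among the four distinct rows. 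You instead factor $M=G^\top G$ with columns of squared norm $3$ in $\mathbb{R}^3$, read the $\pm1$ inner products as equiangular lines at angle $\arccos(1/3)$, get $m\leqslant 4$ parallel classes from the relative bound and $m\geqslant 3$ from the rank, and then pin down the quotient Seidel matrix $N$ and the class sizes spectrally. This is a correct alternative; what it buys is that the sign pattern of the target matrix drops out of the forced spectrum $\{[-3]^1,[1]^3\}$ of $N$ (equivalently $N=I-4\hat v\hat v^\top$ with $\hat v\in\{\pm\tfrac12\}^4$) rather than from solving sign equations, at the cost of importing the relative bound, which the paper never needs.

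Two details in case (iv) need tightening. First, in ruling out $m=3$, the identities $(3|C_a|-4t)(3|C_b|-4t)=|C_a||C_b|$ admit \emph{two} families of solutions, all $|C_a|=t$ as well as all $|C_a|=2t$; both contradict $\sum_a|C_a|=4t+1$, so the conclusion stands, but your claim that only $2t$ occurs is not right as stated. Second, for $m=4$ the spectrum of $D^{1/2}RD^{1/2}$ forces only the \emph{multiset} of class sizes to be $\{t+1,t,t,t\}$ --- this follows from the secular-equation argument for the rank-one downdate $4D-ww^\top$, which you should spell out: an eigenvalue $4t$ of multiplicity $2$ can only arise from a diagonal value repeated three times --- but it does not decide whether the class of size $t+1$ sits at the distinguished vertex of $N$ (the one joined positively to the other three); both assignments yield the same spectrum. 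This does not hurt the theorem: negating the two classes not involved and then swapping the first two classes shows the two candidate matrices are switching equivalent, but that step deserves a sentence in your write-up.
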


We have similar statements for symmetric conference matrices and their principal submatrices.

\begin{theorem}\label{thm:symMain}
	The existence of the following are equivalent:
	\begin{enumerate}[(a)]
		\item a symmetric Seidel matrix with characteristic polynomial $$(x^2-4t-1)^{2t+1};$$
		\item a symmetric Seidel matrix with characteristic polynomial $$x(x^2-4t-1)^{2t};$$
		\item a symmetric Seidel matrix with characteristic polynomial $$(x^2 - 1)(x^2-4t-1)^{2t-1};$$
		\item a symmetric Seidel matrix with characteristic polynomial $$(x - 2)(x + 1)^2(x^2-4t-1)^{2(t-1)}.$$
	\end{enumerate}
\end{theorem}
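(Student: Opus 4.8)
The plan is to mirror the skew-symmetric argument behind Theorem~\ref{thm:skewMain}, replacing the matrix $(4t+3)I+S^2$ by its symmetric counterpart $N:=(4t+1)I-S^2$. Since a symmetric Seidel matrix $S$ of order $n$ satisfies $S^2=SS^\top$ with constant diagonal $n-1$, the matrix $N$ is positive semidefinite exactly when every eigenvalue of $S$ lies in $[-\sqrt{4t+1},\sqrt{4t+1}]$, and then $N$ has constant diagonal $4t+2-n$. My first step is to establish the symmetric analogue of Theorem~\ref{thm:squareFormSkew}: each characteristic polynomial in (a)--(d) is equivalent to $N$ being switching equivalent to an explicit block matrix assembled from all-ones blocks, namely $O$, $J_{4t+1}$, $\diag(2J_{2t},2J_{2t})$, and a positive semidefinite block matrix of order $4t-1$ with constant diagonal $3$ and signed all-ones blocks, for (a)--(d) respectively. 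Exactly as in the skew case this rests on Lemma~\ref{lem:posDef} (to fix the parity and bound the entries of $N$) together with Lemma~\ref{lem:posSemi}.

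With this structure theorem available, the forward implications (a)$\Rightarrow$(b)$\Rightarrow$(c)$\Rightarrow$(d) follow by deleting rows and columns. Starting from a symmetric conference matrix of order $4t+2$, whose spectrum is $\{[\sqrt{4t+1}]^{2t+1},[-\sqrt{4t+1}]^{2t+1}\}$, interlacing (Theorem~\ref{thm:interlacing}) forces a principal submatrix of order $4t+2-k$ to retain $2t+1-k$ copies of each of $\pm\sqrt{4t+1}$, leaving exactly $k$ ``free'' eigenvalues. For $k=1$ the free eigenvalue is $0$ by Proposition~\ref{pro:basic}(ii), and for $k=2$ the two free eigenvalues are $\pm1$ by parts (ii) and (iii) of Proposition~\ref{pro:basic}.

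The delicate case is $k=3$, which yields statement (d). Here Proposition~\ref{pro:basic} only shows that the three free eigenvalues sum to $0$ and have squares summing to $6$, so they are the roots of $x^3-3x-p$ for some integer $p$; real-rootedness forces $|p|\le 2$. To pin down $p$ I will invoke Proposition~\ref{pro:detS}: computing $\det S$ from the remaining spectrum as $(4t+1)^{2t-2}p$ and reducing modulo $4$ gives $p\equiv 2\pmod 4$, whence $p=\pm2$ and the free eigenvalues are $\{2,-1,-1\}$ or $\{-2,1,1\}$. In the latter case the Seidel matrix $-S$ realises the former, so (d) follows. This use of Proposition~\ref{pro:detS} in place of the eigenvalue symmetry supplied by Proposition~\ref{pro:skewProps} in the skew case is precisely the ``extra work'' anticipated in Remark~\ref{rem:extra}.

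For the backward direction I will build up one order at a time, bordering $S$ by a single $\{\pm1\}$ row and column drawn from the all-ones vectors attached to the blocks of $N$, so as to reduce the constant diagonal of $N$ from $3$ to $2$ to $1$ to $0$ and arrive at a conference matrix. The cleanest step is (b)$\Rightarrow$(a): when $N$ is switching equivalent to $J_{4t+1}$ one checks $S\mathbf{1}=\mathbf{0}$, and bordering by $\mathbf{1}$ produces $C$ with $C^2=(4t+1)I$. For (c)$\Rightarrow$(b) the forced choice of bordering vector is $\mathbf{1}_1-\mathbf{1}_2$, where $\mathbf{1}_1,\mathbf{1}_2$ are the block indicators, and the construction succeeds precisely when $S(\mathbf{1}_1-\mathbf{1}_2)=\pm\mathbf{1}$, that is, when $\mathbf{1}^\top S\mathbf{1}=0$; the step (d)$\Rightarrow$(c) is analogous with the signed combination dictated by the block pattern. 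I expect the main obstacle to lie exactly here. In the skew-symmetric setting the antisymmetry forces $\mathbf{1}^\top S\mathbf{1}=0$ automatically, which is what aligns the block indicators with the eigenvectors of $S$; in the symmetric setting this vanishing is not free. The crux of the symmetric argument is therefore to prove, using the integrality and parity of the row sums of $S$ (each an odd integer, forcing the relevant spectral coefficients into $\{0,\pm1\}$), that $S$ does interchange $\mathbf{1}$ and $\mathbf{1}_1-\mathbf{1}_2$ up to sign, so that the bordered matrix has the prescribed characteristic polynomial.
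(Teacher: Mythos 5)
Your overall route is the paper's own: the published proof of Theorem~\ref{thm:symMain} consists precisely of the statement that the skew-symmetric arguments of Section~\ref{sub:proof_for_the_skew_symmetric_case} carry over \emph{mutatis mutandis}, together with Remark~\ref{rem:extra} for the step (c)$\Rightarrow$(d). For that one step your treatment differs in a worthwhile way: you get $\alpha\beta\gamma=p\equiv 2\pmod 4$ from Proposition~\ref{pro:detS}, $|p|\leqslant 2$ from real-rootedness of $x^3-3x-p$, and fix the sign by negating the matrix; the paper instead quotes Karapiperi et al.\ for the exact evaluation $\det S^\prime=2(4t+1)^{2(t-1)}$, which gives $\{2,-1,-1\}$ at once. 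Your variant is self-contained and correct (note only that one must observe $p\in\mathbb{Z}$, e.g.\ by Gauss's lemma applied to $\chi_{S^\prime}(x)/(x^2-4t-1)^{2(t-1)}$, and that the matrix to negate is the submatrix $S^\prime$, not $S$).

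Two places need repair, though. First, your claim that the symmetric analogue of Theorem~\ref{thm:squareFormSkew} rests on Lemma~\ref{lem:posDef} together with Lemma~\ref{lem:posSemi} fails for case (d): there $N=(4t+1)I-S^2$ has constant diagonal $3$ and entries in $\{\pm1,\pm3\}$, so Lemma~\ref{lem:posSemi} (which requires a $\{0,\pm1\}$-matrix with diagonal $1$) does not apply. That case needs the symmetric analogues of Lemmas~\ref{lem:33psd}, \ref{lem:PDform} and~\ref{lem:bfofM} (the Cauchy--Schwarz argument, with the eigenvector for the eigenvalue $2$ playing the role of the null vector), and the bordering step (d)$\Rightarrow$(c) needs the analogue of Lemma~\ref{lem:bfofSM} to know how $S$ permutes the block rows; this is where essentially all of the paper's work lies. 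Second, in your crux for (c)$\Rightarrow$(b), parity of the whole row sums plus $\|S\mathbf{1}\|^2=4t$ only yields $S\mathbf{1}\in\{\pm\mathbf{1},\pm\mathbf{1}_*\}$, where $\mathbf{1}_*=\mathbf{1}_1-\mathbf{1}_2$; excluding $S\mathbf{1}=\pm\mathbf{1}$ requires two further observations that your stated tools do not deliver by themselves. If $S\mathbf{1}=\varepsilon\mathbf{1}$ and $S\mathbf{1}_*=\varepsilon\mathbf{1}_*$, then $\varepsilon\in\{\pm1\}$ is an eigenvalue of $S$ of multiplicity at least $2$, contradicting the characteristic polynomial in (c); and if $S\mathbf{1}=\varepsilon\mathbf{1}$, $S\mathbf{1}_*=-\varepsilon\mathbf{1}_*$, then the leading $2t\times 2t$ diagonal block $A$ of $S$ satisfies $A\mathbf{1}=\mathbf{0}$, which is impossible because each row of $A$ has an odd number ($2t-1$) of nonzero $\pm1$ entries. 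With these insertions your plan goes through, and the analogous care is needed in the (d)$\Rightarrow$(c) bordering; in fact you deserve credit for isolating exactly the point that the paper's ``mutatis mutandis'' glosses over, namely that symmetry, unlike skew-symmetry, does not force $\mathbf{1}^\top S\mathbf{1}=0$ for free.
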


\begin{remark}
	\label{rem:extra}
	To prove that the existence of a symmetric Seidel matrix with characteristic polynomial $(x^2 - 1)(x^2-4t-1)^{2t-1}$ implies the existence of a symmetric Seidel matrix with characteristic polynomial $(x\pm 2)(x\mp 1)^2(x^2-4t-1)^{2(t-1)}$ requires a bit more work than in the skew-symmetric case.
	Hence we give an argument here.
	
	Let $S$ be a Seidel matrix satisfying (c) of Theorem~\ref{thm:symMain}.
	Then $S$ has spectrum $\{ [\pm \sqrt{4t+1}]^{2t-1}, [\pm 1]^1 \}$.
	Let $S^\prime$ be a principal submatrix of order $4t-1$.
	By interlacing, $S^\prime$ has eigenvalues $\pm \sqrt{4t+1}$ each with multiplicity at least $2(t-1)$.
	Let $\alpha$, $\beta$, and $\gamma$ denote the remaining three unknown eigenvalues.
	Using Proposition~\ref{pro:basic}, we have that $\alpha+\beta+\gamma = 0$ and $\alpha^2+\beta^2+\gamma^2 = 6$.
	By a result of Karapiperi et al.~\cite[Proposition 4]{karapiperi2012eigenvalue}, the determinant of $S^\prime$ is equal to $2(4t+1)^{2(t-1)}$.
	Hence we have $\alpha\beta\gamma = 2$.
	Together with the above equations for the power-sums, we find that $\left \{ \alpha, \beta, \gamma \right \} = \left \{ [2]^1, [-1]^{2} \right \}$.
\end{remark}

\begin{theorem}\label{thm:squareFormSym}
	Let $S$ be a Seidel matrix.
	Then 
	\begin{enumerate}[(a)]
		\item $\chi_S(x) = (x^2-4t-1)^{2t+1}$ if and only if $(4t+1)I - S^2 = O$;
		\item $\chi_S(x) = x(x^2-4t-1)^{2t}$ if and only if $(4t+1)I - S^2$ is switching equivalent to $J_{4t+1}$;
		\item $\chi_S(x) = (x^2 - 1)(x^2-4t-1)^{2t-1}$ if and only if $(4t+1)I - S^2$ is switching equivalent to
		\[
			\begin{pmatrix}
						  				2J_{2t} & O \\
						  				O & 2J_{2t}
			\end{pmatrix};
		\]
		\item $\chi_S(x) = (x\pm 2)(x\mp 1)^2(x^2-4t-1)^{2(t-1)}$ if and only if $(4t+1)I - S^2$ is switching equivalent to
		\[
			\begin{pmatrix}
						  				3J_{t-1} & J & J & J \\
						  				J & 3 J_t & -J & -J \\
						  				J & -J & 3J_t & -J \\
						  				J & -J & -J & 3J_t
			\end{pmatrix}.
		\]
	\end{enumerate}
\end{theorem}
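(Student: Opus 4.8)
The plan is to prove each equivalence by passing through the spectrum of the matrix $M := (4t+1)I - S^2$. Since $S$ is a symmetric Seidel matrix, $S^2 = SS^\top$ is positive semidefinite with largest eigenvalue at most $4t+1$, so $M$ is positive semidefinite; its diagonal entries are all equal to $4t+2-n$ (where $n$ is the order of $S$), and each off-diagonal entry of $M$, being the negative of an inner product of two distinct rows of $S$, is congruent to $n$ modulo $2$. This is the exact symmetric analogue of the quantity $(4t+3)I+S^2$ appearing in the skew-symmetric Theorem~\ref{thm:squareFormSkew}, and the two arguments run in parallel. In the forward direction one reads off the spectrum of $S$ from $\chi_S$, hence the spectrum of $M$, and then identifies the switching class of $M$; in the reverse direction one computes the spectrum of $M$ from its stated form, recovers the spectrum of $S^2 = (4t+1)I - M$, and finally fixes the signs of the eigenvalues of $S$ using $\operatorname{tr} S = 0$ (Proposition~\ref{pro:basic}(ii)).

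For parts (a), (b), (c) the orders are $n = 4t+2, 4t+1, 4t$, so the common diagonal entry of $M$ is $0,1,2$ respectively. By interlacing every $2\times 2$ principal submatrix of $M$ is positive semidefinite, so each off-diagonal entry has absolute value at most the diagonal entry; together with the parity constraint this forces $M = O$ in (a), $M$ a $\{\pm 1\}$-matrix with unit diagonal in (b), and $M/2$ a $\{0,\pm 1\}$-matrix with unit diagonal in (c). Applying Lemma~\ref{lem:posSemi} to $M$ (resp. $M/2$), and then using that the rank of $M$ equals the number of nonzero eigenvalues read off from $\chi_S$ (namely $0,1,2$) and that those nonzero eigenvalues are all equal, pins down the block sizes and yields the stated switching classes. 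The reverse implications are immediate from the spectra, the only point being that $\operatorname{tr} S = 0$ forces the eigenvalues $\pm\sqrt{4t+1}$ (and $\pm 1$ in (c)) to occur with equal multiplicities; this holds even when $4t+1$ is a perfect square, since the contribution of the small eigenvalues is too small in absolute value to cancel an unequal split of the large ones.

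Part (d), with $n = 4t-1$ and common diagonal entry $3$, is the main obstacle, exactly as flagged for the related Theorem~\ref{thm:symMain} in Remark~\ref{rem:extra}. Here interlacing and parity only give off-diagonal entries in $\{\pm 1, \pm 3\}$, so Lemma~\ref{lem:posSemi} does not apply directly. Instead I would realise the rank-$3$ matrix $M$ as the Gram matrix of vectors $v_1,\dots,v_{4t-1}\in\mathbb{R}^3$ with $\|v_i\|^2 = 3$. An entry $M_{ij} = \pm 3$ is an equality case of Cauchy--Schwarz, forcing $v_i = \pm v_j$; grouping the indices into classes of parallel vectors and switching within each class so that its vectors coincide, $M$ becomes block-constant with diagonal blocks $3J$ and off-diagonal blocks $\pm J$. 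The distinct class representatives are then equiangular with common angle $\arccos(1/3)$, so the relative bound for equiangular lines in $\mathbb{R}^3$ caps the number of classes at $4$; since $M$ has rank $3$ at least three classes are needed, and a comparison of determinants (with a direct check for small $t$) rules out exactly three, leaving precisely four. The unique switching class of order-$4$ Seidel matrices $R$ for which $R+3I$ has rank $3$ is the one whose representatives satisfy $u_1 = u_2+u_3+u_4$, and this produces exactly the $\pm J$ sign pattern in the statement; matching the nonzero eigenvalues $\{[4t-3]^1,[4t]^2\}$ of $M$ then forces the block sizes $t-1,t,t,t$.

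For the reverse implication in (d) one computes, via the quotient matrix on block-constant vectors, that the displayed block matrix has spectrum $\{[4t-3]^1,[4t]^2,[0]^{4t-4}\}$, whence $S^2 = (4t+1)I - M$ has eigenvalues $4$ (once), $1$ (twice), and $4t+1$ (with multiplicity $4t-4$). Splitting the square roots using $\operatorname{tr} S = 0$ leaves only the two-fold ambiguity recorded as $(x\pm 2)(x\mp 1)^2$, corresponding to replacing $S$ by $-S$. I expect the genuine difficulty of the whole theorem to be concentrated in this one place: establishing that there are precisely four parallel classes carrying the prescribed sign pattern in part (d). Everything else is routine spectral bookkeeping paralleling Theorem~\ref{thm:squareFormSkew}.
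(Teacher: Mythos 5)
Your proposal is correct, and for parts (a)--(c) it runs exactly parallel to the paper: the paper proves the skew analogue (Theorem~\ref{thm:squareFormSkew}) via Lemma~\ref{lem:posDef} (parity and magnitude of the entries of $M$), Lemma~\ref{lem:posSemi}, and a rank/eigenvalue count, and declares the symmetric case \emph{mutatis mutandis}. For the crux, part (d), you take a genuinely different route. The paper's argument (the symmetric analogue of Lemma~\ref{lem:bfofM}) is row-combinatorial: entries of $M$ lie in $\{\pm1,\pm3\}$; the number of $\pm3$'s per row is pinned to two consecutive values by combining the inequality $e_{11}=v_1^2\geqslant 0$, coming from a factorisation of the form $M\,(3I+S^2)=\mathbf{v}\mathbf{v}^\top$, with a Cauchy--Schwarz bound against $\mathbf{v}$; the block structure then comes from the $3\times 3$ classification (Lemmas~\ref{lem:33psd} and~\ref{lem:PDform}); and the signs are forced by solving $\mathbf{r}_1=c_2\mathbf{r}_2+c_3\mathbf{r}_3+c_4\mathbf{r}_4$. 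You instead exploit that $M$ is positive semidefinite of rank $3$: realise it as a Gram matrix of norm-$\sqrt3$ vectors in $\mathbb{R}^3$, read $\pm3$ entries as Cauchy--Schwarz equality (parallel classes), cap the number of classes at four by the relative bound for equiangular lines at angle $\arccos(1/3)$ in $\mathbb{R}^3$ (indeed $3\cdot\frac{8/9}{1-1/3}=4$), and identify the unique four-line configuration (cube diagonals; equivalently the unique order-$4$ Seidel switching class with least eigenvalue $-3$). This is shorter and more conceptual, and it fits the literature the paper already cites (\cite{Greaves2016208}). A further merit is that your trace argument makes the reverse directions honest: the paper calls them clear, which is fine in the skew case thanks to Proposition~\ref{pro:skewProps}, but in the symmetric case resolving the $\pm$ ambiguity in (d) genuinely needs your $\operatorname{tr}S=0$ argument (or Proposition~\ref{pro:detS}). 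What the paper's heavier approach buys in return is uniformity (one proof serves both the skew and symmetric theorems) and extra structural output: Lemma~\ref{lem:bfofM} also identifies $\mathbf{r}_1$ with the kernel vector of $S$, which is reused in Lemma~\ref{lem:bfofSM} and Theorem~\ref{thm:finalPart}; your geometric argument would need a supplement to serve that later purpose.

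Two details you should tighten, though both are fillable rather than fatal. First, ``a comparison of determinants\dots rules out exactly three'' classes is vague and, as stated, false for $t=1$, where the displayed matrix itself has only three nonempty blocks ($3J_{t-1}$ is empty). The clean uniform step is the second trace condition: $\operatorname{tr}(M^2)=8\sum_i n_i^2+(4t-1)^2$ must equal $(4t-3)^2+2(4t)^2$, so $\sum_i n_i^2=4t^2-2t+1$; writing $n_i=t+\varepsilon_i$ with $\sum_i\varepsilon_i=-1$ gives $\sum_i\varepsilon_i^2=1$, forcing the multiset of class sizes $\{t-1,t,t,t\}$ (empty class allowed) in one stroke, which settles the class count and the block sizes simultaneously. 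Second, to land in the single stated switching class you must also observe that the symmetry group of the cube-diagonal configuration acts transitively on the four lines, so it is immaterial which line carries the class of size $t-1$; without this remark you have only shown $M$ is switching equivalent to one of four a priori different sign patterns.
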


\subsection{Proof for the skew-symmetric case} 
\label{sub:proof_for_the_skew_symmetric_case}

	Now we prove Theorem~\ref{thm:skewMain} and Theorem~\ref{thm:squareFormSkew}.
	First we deal with the easier directions of both of these theorems.
	
	For Theorem~\ref{thm:squareFormSkew}, it is clear that matrices whose squares have the forms stated in Theorem~\ref{thm:squareFormSkew} have the corresponding spectra.
	
	For Theorem~\ref{thm:skewMain}, suppose that $S$ is a skew-symmetric Seidel matrix with characteristic polynomial $(x^2 + 1)(x^2+4t+3)^{2t}$.
	Then $\sqrt{-1}S$ is an Hermitean matrix with spectrum $\{ [\pm \sqrt{4t+3}]^{2t}, [\pm 1]^1 \}$.
	Let $S^\prime$ be a principal submatrix of $\sqrt{-1}S$ of order $4t+1$.
	By interlacing (Theorem~\ref{thm:interlacing}), $S^\prime$ has eigenvalues $\pm \sqrt{4t+3}$ each with multiplicity at least $2t-1$.
	Hence $\sqrt{-1}S^\prime$ is a principal submatrix of $-S$ and $\sqrt{-1}S^\prime$ has eigenvalues $\pm \sqrt{-4t-3}$ each with multiplicity at least $2t-1$.
	By Proposition~\ref{pro:skewProps}, since $4t+1$ is odd, $\sqrt{-1}S^\prime$ must have a zero eigenvalue.
	The remaining two unknown eigenvalues of $\sqrt{-1}S^\prime$ can be determined using Proposition~\ref{pro:basic}.
	
	This shows that the existence of a matrix satisfying (iii) of Theorem~\ref{thm:skewMain} implies the existence of a matrix satisfying (iv) of Theorem~\ref{thm:skewMain}.
	The other statements of Theorem~\ref{thm:skewMain} in this direction are easier and follow in a similar way.
	
	We note that, in the symmetric case, to show that the existence of a matrix satisfying (c) of Theorem~\ref{thm:symMain} implies the existence of a matrix satisfying (d) of Theorem~\ref{thm:symMain}, one also uses Proposition~\ref{pro:detS} (see Remark~\ref{rem:extra}).
	This extra condition is needed since we do not have a symmetric version of Proposition~\ref{pro:skewProps}.
	
	Now we begin a series of lemmata which will constitute the proofs for Theorem~\ref{thm:skewMain} and Theorem~\ref{thm:squareFormSkew}.
	The first lemma gives a proof for (ii) of Theorem~\ref{thm:squareFormSkew}.
	
	\begin{lemma}\label{lem:del1}
		Let $S$ be a skew-symmetric Seidel matrix of order $4t+3$ and let $M=(4t+3)I + S^2$.
		Then $S$ has spectrum 
		\[
			\{ [\pm \sqrt{-4t-3}]^{2t+1}, [0]^1\}
		\]
		if and only if $M$ is switching equivalent to $J$.
	\end{lemma}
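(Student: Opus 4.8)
The plan is to prove the two implications separately, with the implication from the square-form of $M$ to the spectrum of $S$ being routine and the converse carrying the real content.

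For the \emph{if} direction, I would use that switching equivalence preserves the spectrum: if $M$ is switching equivalent to $J$, then $M$ is cospectral with $J_{4t+3}$ and so has spectrum $\{[4t+3]^1,[0]^{4t+2}\}$. Since $S^2 = M - (4t+3)I$, the matrix $S^2$ has spectrum $\{[0]^1,[-(4t+3)]^{4t+2}\}$. As $S$ is a skew-symmetric Seidel matrix of odd order, Proposition~\ref{pro:skewProps} guarantees that $0 \in \Lambda(S)$ and that the nonzero eigenvalues occur in pairs $\pm\theta$ of equal multiplicity (complex conjugation sends the $\theta$-eigenspace to the $-\theta$-eigenspace). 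Taking square roots of the eigenvalues of $S^2$ then forces $S$ to have spectrum $\{[\pm\sqrt{-4t-3}]^{2t+1},[0]^1\}$, as claimed.

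For the \emph{only if} direction, the goal is to bring $M$ within the scope of Lemma~\ref{lem:posSemi}. First note that $M$ is real symmetric, since $(S^2)^\top = (S^\top)^2 = (-S)^2 = S^2$, and that it has constant diagonal $1$, because the diagonal entries of $S^2$ equal $1-n = -(4t+2)$ for a skew-symmetric Seidel matrix. The eigenvalues of $M$ are $(4t+3)+\theta^2$ as $\theta$ runs over $\Lambda(S)$, namely $0$ and $4t+3$; hence $M$ is positive semidefinite of rank $1$. Since $M$ has integer entries, and positive semidefiniteness together with unit diagonal forces each off-diagonal entry $m_{ij}$ to satisfy $1 - m_{ij}^2 \geqslant 0$ (apply interlacing to the $2\times 2$ principal submatrices), every off-diagonal entry of $M$ lies in $\{0,\pm1\}$.

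Thus $M$ is a positive semidefinite $\{0,\pm1\}$-matrix with constant diagonal $1$, and Lemma~\ref{lem:posSemi} shows that $M$ is switching equivalent to $\diag(J_{k_1},\dots,J_{k_c})$ for some positive integers $k_1,\dots,k_c$. This block-diagonal matrix has rank $c$, whereas $M$ has rank $1$; hence $c=1$ and $k_1 = 4t+3$, so $M$ is switching equivalent to $J_{4t+3}$. The main obstacle is the \emph{only if} direction, and within it the verification that $M$ is a positive semidefinite $\{0,\pm1\}$-matrix with unit diagonal; once this is in place, both the appeal to Lemma~\ref{lem:posSemi} and the concluding rank count are immediate.
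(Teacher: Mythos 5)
Your proof is correct and follows essentially the same route as the paper: both deduce that $M$ is positive semidefinite of rank $1$ with small integer entries, invoke Lemma~\ref{lem:posSemi} to obtain a block-diagonal form $\diag(J_{k_1},\dots,J_{k_c})$, and conclude $c=1$ from the rank. The only cosmetic difference is that you bound the off-diagonal entries via the $2\times 2$ principal-minor argument directly (obtaining $\{0,\pm 1\}$, which suffices for Lemma~\ref{lem:posSemi}), whereas the paper cites Lemma~\ref{lem:posDef}, whose proof is that same argument together with a parity refinement giving $\{\pm 1\}$.
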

	\begin{proof}
		Suppose $S$ has the assumed spectrum.
		Then $M$ has spectrum $\{ [0]^{4t+2}, [4t+3]^1\}$.
		By Lemma~\ref{lem:posDef}, the entries of $M$ are members of the set $\{ \pm 1 \}$.
		Now, apply Lemma~\ref{lem:posSemi} to deduce that $M$ has the form $\diag(J,\dots,J)$.
		And since the rank of $M$ is $1$, we have that $M$ has the desired form.
	\end{proof}
	
	Using the above lemma we can prove that the existence of matrices satisfying (ii) of Theorem~\ref{thm:skewMain} implies the existence of matrices satisfying (i) of Theorem~\ref{thm:skewMain}.
	
	\begin{lemma}\label{lem:del1up}
		Let $S$ be a skew-symmetric Seidel matrix of order $4t+3$ with spectrum 
		\[
			\{ [\pm \sqrt{-4t-3}]^{2t+1}, [0]^1\}.
		\]
		Then there exists a vector $\mathbf{x} \in \{\pm 1\}^{4t+3}$ such that the matrix
		\[
			\begin{pmatrix}
				S & \mathbf{x} \\
				-\mathbf{x}^\top & 0
			\end{pmatrix}
		\]
		is a skew-symmetric Seidel matrix with spectrum $\{ [\pm \sqrt{-4t-3}]^{2t+2} \}$.
	\end{lemma}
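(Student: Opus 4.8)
The plan is to recognise that the target spectrum $\{[\pm\sqrt{-4t-3}]^{2t+2}\}$ is precisely the spectrum that forces the bordered matrix to be a skew-symmetric conference matrix of order $4t+4$. Writing
\[
  T = \begin{pmatrix} S & \mathbf{x} \\ -\mathbf{x}^\top & 0 \end{pmatrix},
\]
I would first note that $T$ is a skew-symmetric Seidel matrix for \emph{any} choice of $\mathbf{x} \in \{\pm 1\}^{4t+3}$, so the only thing at stake is the spectrum. Since $T$ is skew-symmetric, having spectrum $\{[\pm\sqrt{-4t-3}]^{2t+2}\}$ is equivalent to $TT^\top = (4t+3)I_{4t+4}$. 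Expanding this product in block form, the bottom-right entry equals $\mathbf{x}^\top\mathbf{x} = 4t+3$ automatically, and the remaining requirements reduce to $SS^\top + \mathbf{x}\mathbf{x}^\top = (4t+3)I_{4t+3}$ together with $S\mathbf{x} = \mathbf{0}$.

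Next I would exploit Lemma~\ref{lem:del1}. Setting $M = (4t+3)I + S^2$ and using $S^\top = -S$, we have $SS^\top = (4t+3)I - M$, so the top-left block condition collapses to $\mathbf{x}\mathbf{x}^\top = M$. By Lemma~\ref{lem:del1}, $M$ is switching equivalent to $J$, that is, $M = P^\top J P$ for some signed permutation matrix $P \in \mathcal O_{4t+3}(\mathbb{Z})$. Taking $\mathbf{x} := P^\top \mathbf{1}$ then produces a $\{\pm1\}$-vector with $\mathbf{x}\mathbf{x}^\top = P^\top \mathbf{1}\mathbf{1}^\top P = M$, which settles the first requirement and pins down the candidate border vector.

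It then remains to verify $S\mathbf{x} = \mathbf{0}$ for this particular $\mathbf{x}$, which I expect to be the crux of the argument. From $\mathbf{x}\mathbf{x}^\top = M$ and $\mathbf{x}^\top\mathbf{x} = 4t+3$ one obtains $M\mathbf{x} = (4t+3)\mathbf{x}$, and substituting $M = (4t+3)I + S^2$ gives $S^2\mathbf{x} = \mathbf{0}$. The key observation is that $\mathbf{x}^\top S^2 \mathbf{x} = -\mathbf{x}^\top S^\top S \mathbf{x} = -\|S\mathbf{x}\|^2$, so $S^2\mathbf{x} = \mathbf{0}$ forces $S\mathbf{x} = \mathbf{0}$, as required. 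Conceptually, $\mathbf{x}$ spans the image of the rank-one matrix $M$, which is exactly the zero-eigenspace of $S$; bordering by such an $\mathbf{x}$ is therefore the natural way to absorb the lone zero eigenvalue of $S$ and lift the multiplicities of $\pm\sqrt{-4t-3}$ to $2t+2$.
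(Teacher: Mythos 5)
Your proof is correct and takes essentially the same route as the paper: both use Lemma~\ref{lem:del1} to identify $M=(4t+3)I+S^2$ with (a switching image of) $J$, border $S$ by the resulting $\{\pm 1\}$ null vector of $S$, and check that the bordered matrix squares to $-(4t+3)I$. The only difference is bookkeeping: the paper switches $S$ so that $M=J$ and appends $\mathbf{1}$, whereas you keep $S$ fixed, append $\mathbf{x}=P^\top\mathbf{1}$, and spell out the deduction $S^2\mathbf{x}=\mathbf{0}\Rightarrow S\mathbf{x}=\mathbf{0}$ that the paper leaves as ``easy to see.''
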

	\begin{proof}
		Let $M=(4t+3)I + S^2$.
		By Lemma~\ref{lem:del1}, we can assume that $M = J$.
		It is then easy to see that the nullspace of $S$ is spanned by $\mathbf{1}$.
		Now consider the skew-symmetric matrix 
		\[
			T = 
			\begin{pmatrix}
				S & \mathbf{1} \\
				-\mathbf{1}^{\top} & 0
			\end{pmatrix}.
		\]
		Then $T^2 = -(4t+3)I$ as required.
	\end{proof}
	
	The next lemma gives a proof for (iii) of Theorem~\ref{thm:squareFormSkew}.

	\begin{lemma}\label{lem:del2}
		Let $S$ be a skew-symmetric Seidel matrix of order $4t+2$ and let $M=(4t+3)I + S^2$.
		Then $S$ has spectrum 
		\[
			\{ [\pm \sqrt{-4t-3}]^{2t}, [\pm \sqrt{-1}]^1\}
		\]
		if and only if $M$ is switching equivalent to
		\[
			  \begin{pmatrix}
				2J_{2t+1} & O \\
				O & 2J_{2t+1}
			\end{pmatrix}.
		\]
	\end{lemma}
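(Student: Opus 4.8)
The plan is to prove the two directions separately, treating the passage from the block form to the spectrum as the easy direction (the one already declared ``clear'' in the discussion preceding Lemma~\ref{lem:del1}) and concentrating the work on its converse. Throughout I would exploit the skew-symmetry identity $SS^\top = -S^2$, which shows that $M = (4t+3)I + S^2 = (4t+3)I - SS^\top$ is a symmetric matrix; since each diagonal entry of $S^2$ equals $1-(4t+2) = -(4t+1)$, the matrix $M$ has constant diagonal entries equal to $2$.

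For the easy direction, suppose $M$ is switching equivalent to $\diag(2J_{2t+1}, 2J_{2t+1})$. Then $M$ has spectrum $\{[0]^{4t}, [4t+2]^2\}$, so $S^2 = M - (4t+3)I$ has spectrum $\{[-(4t+3)]^{4t}, [-1]^2\}$. As $S$ is real and skew-symmetric, its nonzero eigenvalues are purely imaginary and the eigenvalues $\theta$ and $-\theta = \bar\theta$ occur with equal multiplicity (cf.\ Proposition~\ref{pro:skewProps} and its proof). Taking square roots and splitting each multiplicity evenly then yields exactly the spectrum $\{[\pm\sqrt{-4t-3}]^{2t}, [\pm\sqrt{-1}]^1\}$.

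For the converse, which is the main content, I would first note that if $S$ has the stated spectrum then $S^2$ has spectrum $\{[-(4t+3)]^{4t}, [-1]^2\}$, so $M$ has spectrum $\{[0]^{4t}, [4t+2]^2\}$; in particular $M$ is positive semidefinite of rank $2$. The key structural step is to pin down the entries of $M$. For $i \ne j$ the entry $M_{ij} = (S^2)_{ij}$ is a sum of $4t$ products of $\pm 1$, hence is even, while positive semidefiniteness of the $2\times 2$ principal submatrices forces $|M_{ij}| \leqslant 2$; thus, exactly as in Lemma~\ref{lem:posDef} and the proof of Lemma~\ref{lem:del1}, every entry of $M$ lies in $\{0,\pm 2\}$. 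Consequently $\tfrac{1}{2}M$ is a positive semidefinite $\{0,\pm 1\}$-matrix with constant diagonal $1$, so Lemma~\ref{lem:posSemi} applies and, after rescaling, $M$ is switching equivalent to $\diag(2J_{k_1},\dots,2J_{k_c})$ for some positive integers $k_1,\dots,k_c$.

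It then remains to identify the block sizes. Since each block $2J_{k_i}$ contributes rank $1$ and $M$ has rank $2$, we must have $c = 2$. Moreover the unique nonzero eigenvalue of $2J_{k_i}$ is $2k_i$, and both nonzero eigenvalues of $M$ equal $4t+2$, which forces $k_1 = k_2 = 2t+1$. Hence $M$ is switching equivalent to $\diag(2J_{2t+1}, 2J_{2t+1})$, as required. I expect the main obstacle to be the justification that the entries of $M$ lie in $\{0,\pm 2\}$: one must correctly transcribe the parity-plus-interlacing argument of Lemma~\ref{lem:posDef} to $M = (4t+3)I + S^2$, which (unlike the statement of that lemma) has the form $cI - SS^\top$ rather than $SS^\top - aI$. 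The entrywise conclusions are nonetheless identical, since the relevant off-diagonal entries differ only by sign and $M$ is genuinely positive semidefinite.
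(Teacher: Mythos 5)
Your proposal is correct and follows essentially the same route as the paper: entries of $M$ are pinned to $\{0,\pm 2\}$ via the parity-plus-interlacing argument of Lemma~\ref{lem:posDef}, then Lemma~\ref{lem:posSemi} applied to $M/2$ gives a block-diagonal form, and the rank-$2$ and eigenvalue conditions force two blocks $2J_{2t+1}$. Your attention to the sign issue in invoking Lemma~\ref{lem:posDef} (since here $M = (4t+3)I - SS^\top$ rather than $SS^\top - aI$) and your explicit write-up of the easy direction are just careful elaborations of steps the paper leaves implicit.
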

	\begin{proof}
		Suppose $S$ has the assumed spectrum.
		Then $M$ has spectrum $\{ [0]^{4t}, [4t+2]^2\}$.
		By Lemma~\ref{lem:posDef}, entries of $M$ are members of the set $\{ 0, \pm 2 \}$.
		Now, apply Lemma~\ref{lem:posSemi} to the matrix $M/2$, to deduce that $M/2$ has the form $\diag(J,\dots,J)$.
		Since the rank of $M$ is $2$, we have that $M$ has the desired form.
	\end{proof}
	
	Just like with Lemma~\ref{lem:del1up}, we use Lemma~\ref{lem:del2} to prove that the existence of matrices satisfying (iii) of Theorem~\ref{thm:skewMain} implies the existence of matrices satisfying (ii) of Theorem~\ref{thm:skewMain}.
	
	\begin{lemma}\label{lem:del2up}
		Let $S$ be a skew-symmetric Seidel matrix of order $4t+2$ with spectrum 
		\[
			\{ [\pm \sqrt{-4t-3}]^{2t}, [\pm \sqrt{-1}]^1\}.
		\]
		Then there exists a vector $\mathbf{x} \in \{\pm 1\}^{4t+2}$ such that the matrix
		\[
			\begin{pmatrix}
				S & \mathbf{x} \\
				-\mathbf{x}^\top & 0
			\end{pmatrix}
		\]
		is a skew-symmetric Seidel matrix with spectrum $\{ [\pm \sqrt{-4t-3}]^{2t+1}, [0]^1 \}$.
	\end{lemma}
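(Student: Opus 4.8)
The plan is to mirror the proof of Lemma~\ref{lem:del1up}. Set $M = (4t+3)I + S^2$; by Lemma~\ref{lem:del2} we may assume, after switching, that
\[
	M = \begin{pmatrix} 2J_{2t+1} & O \\ O & 2J_{2t+1} \end{pmatrix}.
\]
I would then take the appended vector to be $\mathbf{x} = \mathbf{1}$ and set $T = \begin{pmatrix} S & \mathbf{1} \\ -\mathbf{1}^\top & 0 \end{pmatrix}$, which is visibly a skew-symmetric Seidel matrix of order $4t+3$. Expanding the square in block form and using $S^\top = -S$ (so that the lower-left block equals $(S\mathbf{1})^\top$) together with $\mathbf{1}^\top\mathbf{1} = 4t+2$, one obtains
\[
	(4t+3)I + T^2 = \begin{pmatrix} M - J & S\mathbf{1} \\ (S\mathbf{1})^\top & 1 \end{pmatrix}.
\]
The goal is to show that the right-hand side is a rank-one $\{\pm1\}$-matrix: any such matrix $\mathbf{y}\mathbf{y}^\top$ with $\mathbf{y}\in\{\pm1\}^{4t+3}$ is switching equivalent to $J_{4t+3}$, so by Lemma~\ref{lem:del1} (applied in order $4t+3$) it forces $T$ to have spectrum $\{[\pm\sqrt{-4t-3}]^{2t+1},[0]^1\}$, as required.

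The crux is to compute $S\mathbf{1}$. Write $\mathbf{e}_1 = \begin{pmatrix}\mathbf{1}\\\mathbf{0}\end{pmatrix}$ and $\mathbf{e}_2 = \begin{pmatrix}\mathbf{0}\\\mathbf{1}\end{pmatrix}$ for the two block-indicator vectors, so that $\mathbf{1} = \mathbf{e}_1 + \mathbf{e}_2$ and the column space of $M$ is $V = \langle \mathbf{e}_1, \mathbf{e}_2 \rangle$. Since $M$ is a polynomial in $S$ it commutes with $S$, hence $V$ is $S$-invariant. As $M\mathbf{e}_i = (4t+2)\mathbf{e}_i$, we get $S^2\mathbf{e}_i = \bigl(M - (4t+3)I\bigr)\mathbf{e}_i = -\mathbf{e}_i$, so $S^2 = -I$ on $V$. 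Because $\mathbf{e}_1,\mathbf{e}_2$ are orthogonal of equal norm and $S$ is skew-symmetric, $S$ acts on $V$ by $S\mathbf{e}_1 = b\mathbf{e}_2$ and $S\mathbf{e}_2 = -b\mathbf{e}_1$ for some scalar $b$, and $S^2 = -I$ forces $b = \pm 1$. Therefore $S\mathbf{1} = b(\mathbf{e}_2 - \mathbf{e}_1) = \begin{pmatrix}-b\mathbf{1}\\b\mathbf{1}\end{pmatrix}$, which is a $\{\pm1\}$-vector.

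It then follows that $(S\mathbf{1})(S\mathbf{1})^\top = \begin{pmatrix} J & -J \\ -J & J \end{pmatrix} = M - J$, so the displayed matrix above equals $\mathbf{y}\mathbf{y}^\top$ with $\mathbf{y} = \begin{pmatrix} S\mathbf{1} \\ 1 \end{pmatrix} \in \{\pm 1\}^{4t+3}$. This is the desired rank-one $\{\pm1\}$-matrix, and $\mathbf{x} = \mathbf{1}$ is the required vector. I expect the main obstacle to be the middle step: correctly pinning down $S\mathbf{1}$, that is, understanding the action of $S$ on the two-dimensional column space of $M$ and verifying that the result is a genuine $\{\pm1\}$-vector of the right block shape. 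Everything else is routine block arithmetic.
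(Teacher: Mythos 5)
Your proof is correct and is essentially the paper's own argument: both reduce via Lemma~\ref{lem:del2} to $M = \diag(2J_{2t+1},2J_{2t+1})$, determine how $S$ acts on the two-dimensional top eigenspace of $M$, append a $\{\pm 1\}$-vector lying in that eigenspace, and read the spectrum off the square of the enlarged matrix. The only cosmetic differences are that the paper appends $\mathbf{1}_* = (\mathbf{1}^\top,-\mathbf{1}^\top)^\top$ rather than $\mathbf{1}$ (the two choices are switching equivalent) and pins down the action of $S$ on that eigenspace via the complex $\pm\sqrt{-1}$-eigenvectors instead of your real invariant-subspace computation, which lets it obtain $T^2 = -(4t+3)I + J$ on the nose rather than up to switching.
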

	\begin{proof}
		Let $M=(4t+3)I + S^2$.
		By Lemma~\ref{lem:del2}, we can assume that
		\[
			M = \begin{pmatrix}
				2J_{2t+1} & O \\
				O & 2J_{2t+1}
			\end{pmatrix}.
		\]
		By inspection, it is evident that the eigenspace $\mathcal E$ of $M$ corresponding to the eigenvalue $4t+2$ is spanned by the vectors $\mathbf{1}$ and $\mathbf{1}_* := ( \mathbf{1}^\top, -\mathbf{1}^\top)^\top$.
		
		Let $\mathbf{v}$ be an eigenvector of $S$ for the eigenvalue $\sqrt{-1}$.
		Then $\overline {\mathbf{v}}$ is an eigenvector of $S$ for the eigenvalue $-\sqrt{-1}$.
		The eigenspace $\mathcal E$ is also spanned by $\mathbf{v}$ and $\overline {\mathbf{v}}$.
		Hence, we can choose $\mathbf{v}$ such that  $\mathbf{1} = \operatorname{Re}(\mathbf{v})$ and $\mathbf{1}_* = \operatorname{Im}(\mathbf{v})$.
		Therefore $S\mathbf{1}_*= \mathbf{1}$ and $\mathbf{1}_*^{\top} S= \mathbf{1}^\top$.
		
		Now consider the skew-symmetric matrix 
		\[
			T = 
			\begin{pmatrix}
				S & \mathbf{1}_* \\
				-\mathbf{1}_*^{\top} & 0
			\end{pmatrix}.
		\]
		Its square has the form
		\[
			T^2 = 
			\begin{pmatrix}
				S^2 - \mathbf{1}_*\mathbf{1}_*^{\top} & S\mathbf{1}_* \\
				-\mathbf{1}_*^{\top} S & -\mathbf{1}_*^{\top}\mathbf{1}_*
			\end{pmatrix}
			=
			\begin{pmatrix}
				-(4t+3)I+J & \mathbf{1} \\
				\mathbf{1}^\top & -(4t+2)
			\end{pmatrix}
			=
			-(4t+3)I + J.
		\]
		 It is therefore evident that $T$ has spectrum $\{ [\pm \sqrt{-4t-3}]^{2t+1}, [0]^1\}$.
	\end{proof}
%
	
	At this point it remains to prove (iv) of Theorem~\ref{thm:squareFormSkew} and that the existence of matrices satisfying (iv) in Theorem~\ref{thm:skewMain} implies the existence of matrices satisfying (iii).
	To do this we need a structural result for positive semidefinite matrices with constant diagonal $3$.

	\begin{lemma}\label{lem:33psd}
		Up to switching equivalence there are precisely four positive semidefinite $3 \times 3$ $\{\pm 1, \pm 3\}$-matrices with diagonal entries equal to $3$ given as follows:
		\[
			\begin{pmatrix}
				3 & 1 & 1 \\
				1 & 3 & 1 \\
				1 & 1 & 3
			\end{pmatrix}, 
			\begin{pmatrix}
				3 & 1 & 1 \\
				1 & 3 & -1 \\
				1 & -1 & 3
			\end{pmatrix},
			\begin{pmatrix}
				3 & 3 & 1 \\
				3 & 3 & 1 \\
				1 & 1 & 3
			\end{pmatrix},
			\text{ and }
			\begin{pmatrix}
				3 & 3 & 3 \\
				3 & 3 & 3 \\
				3 & 3 & 3
			\end{pmatrix}.
		\]
	\end{lemma}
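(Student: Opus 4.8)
The plan is to write such a matrix as
\[
  M = \begin{pmatrix} 3 & a & b \\ a & 3 & c \\ b & c & 3 \end{pmatrix}, \qquad a,b,c \in \{\pm 1, \pm 3\},
\]
and to reduce positive semidefiniteness to a single determinantal inequality. A real symmetric matrix is positive semidefinite if and only if all of its principal minors are nonnegative. Here the $1\times 1$ minors equal $3$, and each $2\times 2$ principal minor equals $9 - m_{ij}^2$, which is nonnegative since $|m_{ij}| \le 3$; hence the only remaining condition is $\det M \ge 0$. Expanding the determinant gives $\det M = 27 - 3(a^2+b^2+c^2) + 2abc$, so the whole problem reduces to analysing this expression.

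First I would pin down the relevant switching invariants. Conjugating $M$ by a diagonal $\{\pm 1\}$-matrix $\diag(\epsilon_1,\epsilon_2,\epsilon_3)$ sends $(a,b,c)$ to $(\epsilon_1\epsilon_2 a,\, \epsilon_1\epsilon_3 b,\, \epsilon_2\epsilon_3 c)$, while permutation matrices simply permute the three off-diagonal positions. Consequently both the multiset $\{|a|,|b|,|c|\}$ and the sign of the product $abc$ are preserved under switching equivalence. Conversely, flipping a single $\epsilon_i$ reverses the signs of exactly the two off-diagonal entries meeting index $i$, so for fixed absolute values any two sign patterns with the same product sign are switching equivalent. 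Thus a switching class of these matrices is determined precisely by the pair consisting of the multiset $\{|a|,|b|,|c|\}$ and the sign of $abc$.

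With this in hand, I would run a short case analysis on how many of $a,b,c$ have absolute value $3$, equivalently on the value $a^2+b^2+c^2 \in \{3,11,19,27\}$, substituting $abc = \pm(|a|\,|b|\,|c|)$ into the determinant each time. When all three absolute values equal $1$ one gets $\det M = 18 + 2abc \in \{16,20\}$, positive for either sign of the product, producing the first and second normal forms; when exactly one equals $3$ one gets $\det M = -6 + 2abc$, which is nonnegative only for $abc = 3$, producing the third; when exactly two equal $3$ one gets $\det M = -30 + 2abc < 0$ for both signs, so no matrix survives; and when all three equal $3$ one gets $\det M = -54 + 2abc$, nonnegative only for $abc = 27$, producing $3J$. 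Since the multiset of absolute values and the sign of $abc$ are switching invariants, these four survivors are pairwise inequivalent, giving exactly the four matrices claimed. The computations themselves are routine; the only point requiring care is the switching-equivalence bookkeeping of the second paragraph, namely verifying that the multiset of absolute values together with the product sign forms a complete invariant, which is what lets us reduce each surviving case to a single normal form.
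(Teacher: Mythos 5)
Your proof is correct, and it is genuinely more than what the paper offers: the paper's entire proof of this lemma is the sentence ``This can be checked exhaustively by hand or computer.'' Your argument is a structured, human-verifiable version of that exhaustion. The two reductions you make are both sound: (1) since all $1\times 1$ principal minors equal $3$ and every $2\times 2$ principal minor equals $9-m_{ij}^2\geqslant 0$, positive semidefiniteness collapses to the single condition $\det M = 27-3(a^2+b^2+c^2)+2abc \geqslant 0$ (correctly using \emph{all} principal minors, as required for semidefiniteness rather than definiteness); (2) the pair consisting of the multiset $\{|a|,|b|,|c|\}$ and the sign of $abc$ is a complete invariant for switching equivalence, since diagonal sign changes act on the sign pattern $(\operatorname{sgn} a,\operatorname{sgn} b,\operatorname{sgn} c)$ through the Klein four-group of double flips, whose orbits on $\{\pm 1\}^3$ are exactly the two product-sign classes, and permutations handle the positions of the absolute values. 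This collapses $4^3=64$ raw matrices to eight cases, and your determinant evaluations ($18+2abc$, $-6+2abc$, $-30+2abc$, $-54+2abc$ in the four cases) correctly leave exactly the four pairwise inequivalent survivors listed in the lemma. The only point worth flagging is cosmetic: you tacitly take $M$ symmetric, which is the intended reading (positive semidefiniteness is meant for symmetric matrices, and the lemma is only ever applied to principal submatrices of matrices of the form $SS^\top - aI$, which are symmetric). Compared with the paper, your route trades a one-line appeal to brute force for an explicit completeness argument for the switching invariant, which is precisely the bookkeeping the paper leaves implicit and is also what Lemma~\ref{lem:PDform} implicitly relies on when it invokes this classification.
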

	\begin{proof}
		This can be checked exhaustively by hand or computer.
	\end{proof}
	
	Using Lemma~\ref{lem:33psd}, we obtain the following structural result for positive semidefinite $\{\pm 1, \pm 3\}$-matrices with constant diagonal $3$.
	
	\begin{lemma}\label{lem:PDform}
		Let $M$ be a positive semidefinite $\{\pm 1, \pm 3\}$-matrix with diagonal entries equal to $3$.
		Suppose that the first row $\mathbf{r}_1$ has its first $a \geqslant 1$ entries equal to $3$.
		Then there exists a matrix $P \in \mathcal O_n(\mathbb Z)$ such that the first $a$ rows of $P^\top MP$ are equal to $\mathbf{r}_1$.
	\end{lemma}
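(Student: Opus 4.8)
The plan is to show that the switching matrix may in fact be taken to be the identity, since the first $a$ rows of $M$ are already equal to $\mathbf{r}_1$. The mechanism is that an off-diagonal entry equal to $3$ saturates the Cauchy--Schwarz bound imposed by positive semidefiniteness, which rigidly forces the two rows involved to coincide.

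Concretely, let $\mathbf{e}_i$ denote the $i$th standard basis vector and fix some $j \in \{2,\dots,a\}$. As the diagonal entries of $M$ are $3$ and $M_{1j}=3$ by hypothesis, we compute $(\mathbf{e}_1-\mathbf{e}_j)^\top M (\mathbf{e}_1-\mathbf{e}_j) = M_{11}-2M_{1j}+M_{jj} = 3-6+3 = 0$. For a positive semidefinite matrix any vector annihilating the quadratic form lies in the kernel, so $M(\mathbf{e}_1-\mathbf{e}_j)=\mathbf{0}$; equivalently, columns $1$ and $j$ of $M$ are equal, and by symmetry of $M$ so are rows $1$ and $j$. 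Running this over every $j \in \{2,\dots,a\}$ shows that rows $1,\dots,a$ of $M$ all coincide with $\mathbf{r}_1$, whence the conclusion holds with $P=I$.

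The same conclusion can be reached through the packaged classification in Lemma~\ref{lem:33psd}: for each index $k$ the principal submatrix on $\{1,j,k\}$ is a positive semidefinite $\{\pm1,\pm3\}$-matrix with diagonal $3$ carrying a $3$ in its $(1,j)$-entry, and the listed representatives with such an entry are exactly the last two, both of whose first two rows agree, which again gives $M_{1k}=M_{jk}$. I do not anticipate a genuine obstacle here. The one subtlety worth recording is that the nominal freedom in choosing $P \in \mathcal O_n(\mathbb Z)$ is not needed for the stated conclusion, the identity already sufficing; the general $P$ is presumably retained only so that the lemma slots cleanly into the switching-equivalence bookkeeping of the subsequent proof of part~(iv) of Theorem~\ref{thm:squareFormSkew}.
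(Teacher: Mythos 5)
Your proof is correct, but it takes a genuinely different route from the paper's. The paper proves the lemma by induction on $a$, repeatedly extracting $3\times 3$ principal submatrices containing a pair of indices with off-diagonal entry $3$ and invoking the classification of positive semidefinite $3\times 3$ $\{\pm 1,\pm 3\}$-matrices (Lemma~\ref{lem:33psd}) to force the ambiguous entries to agree. Your argument replaces all of this with the standard rigidity fact for positive semidefinite matrices: since $M_{11}=M_{jj}=M_{1j}=3$, the vector $\mathbf{e}_1-\mathbf{e}_j$ annihilates the quadratic form and hence lies in $\ker M$, so rows $1$ and $j$ coincide outright. This buys several things: it bypasses Lemma~\ref{lem:33psd} entirely (which in the paper serves no other purpose than to feed this lemma), it never uses the hypothesis that the entries lie in $\{\pm 1,\pm 3\}$, and it yields the stronger conclusion that $P=I$ suffices --- a point you correctly flag, and which the paper's phrasing ``up to switching equivalence'' obscures. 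One small caution about your fallback argument via Lemma~\ref{lem:33psd}: a principal submatrix is only switching \emph{equivalent} to one of the four listed representatives, not necessarily equal to one, so you still need to check that every member of those switching classes having diagonal $3$ and a $3$ in the $(1,2)$-entry has its first two rows equal (signs of the third column can flip under switching, but they flip in both rows simultaneously, so the conclusion survives); the paper's own proof glosses over exactly the same point. Since your primary kernel argument is airtight and self-contained, this does not affect correctness.
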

	\begin{proof}
		The lemma is trivial for $n \leqslant 2$ and for $n = 3$ it follows from Lemma~\ref{lem:33psd}.
		Fix $n \geqslant 4$. 
		The case $a = 1$ is vacuous.
		First we deal with the case $a = 2$.
		For any $3 \leqslant i \leqslant n$ we can consider the $3 \times 3$ principal submatrix induced by the rows (and columns) $1$, $2$, and $i$.
		This matrix $X$ has the form
		\[
			\begin{pmatrix}
				3 & 3 & x \\
				3 & 3 & y \\
				x & y & 3
			\end{pmatrix}.
		\]
		Since the property of being positive semidefinite is preserved for principal submatrices, $X$ must be switching equivalent to one of the matrices in Lemma~\ref{lem:33psd}.
		Hence we have $x = y$ for all $i$.
		
		We assume that the lemma is true for all $a$ less than some $k \geqslant 3$.
		Consider the case $a = k$.
		By our assumption, for all $i, j < k$, we can assume (up to switching equivalence) that $M_{i,j} = 3$.
		We also know that $M_{1,k} = 3$ and $M_{k,k} = 3$.
		
		Now, for any $1 < j < k$, we can form a $3 \times 3$ principal submatrix of $M$ induced by the rows (and columns) $1$, $j$, and $k$.
		This matrix $X$ has the form
		\[
			\begin{pmatrix}
				3 & 3 & 3 \\
				3 & 3 & y \\
				3 & y & 3
			\end{pmatrix}.
		\]
		Again, this matrix must be switching equivalent to one of the matrices in Lemma~\ref{lem:33psd}, hence $y = 3$ for all $1 < j < k$.
	\end{proof}
	
	
	Now we can prove (iv) of Theorem~\ref{thm:squareFormSkew}.

	\begin{lemma}\label{lem:bfofM}
		Let $S$ be a skew-symmetric Seidel matrix of order $4t+1$ and let $M=(4t+3)I + S^2$.
		Then $S$ has spectrum 
		\[
			\{ [\pm \sqrt{-4t-3}]^{2t-1}, [\pm \sqrt{-3}]^1, [0]^1\}
		\]
		if and only if there exists some $P \in \mathcal O_{4t+1}(\mathbb{Z})$ such that
		\begin{equation}
		  P^\top M P = \begin{pmatrix}
			  				3J_{t+1} & J & J & J \\
			  				J & 3 J_t & -J & -J \\
			  				J & -J & 3J_t & -J \\
			  				J & -J & -J & 3J_t
			  			\end{pmatrix}. \label{eqn:sqForm3}
		\end{equation}
		Moreover, the first row of $P^\top M P$ is a null vector of $P^\top S P$.
	\end{lemma}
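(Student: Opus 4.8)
The plan is to prove the equivalence in two steps, treating the reverse implication (the displayed block form $\Rightarrow$ the spectrum of $S$) as the routine one and concentrating on the forward implication. For the reverse direction I would view the block matrix as the \emph{blow-up} of $B:=\left(\begin{smallmatrix}3&1&1&1\\1&3&-1&-1\\1&-1&3&-1\\1&-1&-1&3\end{smallmatrix}\right)$ with block sizes $(t+1,t,t,t)$, that is, the matrix obtained by replacing the $(a,b)$-entry of $B$ by a constant block of the appropriate size. A short computation gives that this matrix has spectrum $\{[0]^{4t-2},[4t]^2,[4t+3]^1\}$; since $M=(4t+3)I+S^2$ and $S$ is skew-symmetric, the eigenvalues of $S$ are the square roots of those of $M-(4t+3)I$, which yields exactly the asserted spectrum of $S$.

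For the forward implication, suppose $S$ has the stated spectrum. First I would note that $M=(4t+3)I+S^2$ then has spectrum $\{[0]^{4t-2},[4t]^2,[4t+3]^1\}$; in particular $M$ is positive semidefinite of rank $3$, and its diagonal entries all equal $3$. By Lemma~\ref{lem:posDef} (using that $4t+1$ is odd) the off-diagonal entries of $M$ lie in $\{\pm1,\pm3\}$, so $M$ is a positive semidefinite $\{\pm1,\pm3\}$-matrix with constant diagonal $3$. Because $M$ has rank $3$ it is the Gram matrix of $4t+1$ vectors $\mathbf g_1,\dots,\mathbf g_{4t+1}\in\mathbb R^3$ with $\|\mathbf g_i\|^2=3$ and $\langle\mathbf g_i,\mathbf g_j\rangle\in\{\pm1,\pm3\}$. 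Equality $|\langle\mathbf g_i,\mathbf g_j\rangle|=3$ in the Cauchy--Schwarz inequality forces $\mathbf g_i=\pm\mathbf g_j$, so the relation ``$M_{ij}=\pm3$'' is an equivalence relation; after switching (the mechanism being Lemma~\ref{lem:PDform}, applied starting from the $3$'s in the first row) I may assume its classes are consecutive and that the $\mathbf g_i$ within a class coincide. Then each diagonal block of $M$ is $3J$, the cross-class entries are $\pm1$, and $M$ is the blow-up of $3I_m+N$, where $m$ is the number of classes and $N$ is a symmetric Seidel matrix of order $m$; the rank condition becomes that $3I_m+N$ is positive semidefinite of rank $3$, so the rank forces $m\geqslant3$.

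The main obstacle is to determine $m$, the pattern $N$, and the block sizes. Here I would argue geometrically: the class representatives are vectors of squared length $3$ in $\mathbb R^3$ with pairwise inner products $\pm1$, and after fixing one of them and projecting the remainder one checks that at most three further vectors can occur and that, when three do, they are mutually at $120^\circ$. This forces $m\leqslant4$, and in the case $m=4$ it identifies $N$ (up to switching) with the off-diagonal pattern of $B$. The case $m=3$ passes this geometric test, so it must be excluded separately: for each of the two switching classes of Seidel matrices of order $3$ one computes the relevant symmetric functions of the three nonzero eigenvalues of the corresponding blow-up (via the determinant and the trace of its square) and finds that requiring them to equal $4t,4t,4t+3$ would force non-integral class sizes. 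With $m=4$ and $N$ fixed, the block sizes are then determined by matching the spectrum: writing $D=\diag(n_1,n_2,n_3,n_4)$, the nonzero eigenvalues of the blow-up are those of $BD$, and requiring these to be $4t,4t,4t+3$ forces $(n_1,n_2,n_3,n_4)=(t+1,t,t,t)$ --- the eigenvalue $4t$ of multiplicity $2$ arises from the symmetric differences of the last three blocks and pins them to be equal, after which the simple eigenvalue $4t+3$ fixes the first. I expect this classification of the admissible configurations to be the most delicate part of the argument.

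Finally, for the ``moreover'' clause let $S'=P^\top SP$ and $M'=P^\top MP$, so that $M'=(4t+3)I+(S')^2$ is the displayed block form. Since $S'$ is normal, $\ker S'=\ker(S')^2$, and this space is exactly the $(4t+3)$-eigenspace of $M'$. A direct computation shows that the first row $\mathbf r$ of $M'$ satisfies $M'\mathbf r=(4t+3)\mathbf r$, that is, $\mathbf r$ is a top eigenvector of $M'$; hence $\mathbf r$ lies in that eigenspace and so $S'\mathbf r=\mathbf 0$, which is the claim.
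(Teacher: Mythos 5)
Your proposal is correct, but it takes a genuinely different route from the paper's proof. The paper argues entry-by-entry: setting $M^\prime = 3I + S^2$, it uses the rank-one identity $MM^\prime = \mathbf{v}\mathbf{v}^\top$ (where $\mathbf{v}$ spans $\ker S$) to obtain $v_1^2 = 8a - 2(4t+3) + 7 \geqslant 0$, where $a$ is the number of entries $\pm 3$ in the first row, and Cauchy--Schwarz against $\mathbf{v}$ to obtain $a \leqslant t+1$; hence $a \in \{t,t+1\}$, Lemma~\ref{lem:PDform} collects the $\pm3$'s into diagonal blocks, a counting argument forces exactly four distinct rows with multiplicities $(t+1,t,t,t)$, and the sign pattern is pinned down last, by writing $\mathbf{r}_1$ as a linear combination of $\mathbf{r}_2,\mathbf{r}_3,\mathbf{r}_4$ (rank $3$) and solving the resulting sign system; the ``moreover'' clause falls out because equality in Cauchy--Schwarz identifies $\mathbf{r}_1$ with $\mathbf{v}$. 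You instead read $M$ as the Gram matrix of $4t+1$ vectors of squared length $3$ in $\mathbb{R}^3$, so that entries $\pm3$ mean parallel Gram vectors, and reduce the structure question to classifying at most four lines at mutual angle $\arccos(1/3)$ in $\mathbb{R}^3$: thus you determine the pattern first ($M$ is a blow-up of $3I_m+N$ with $m\leqslant 4$, and tetrahedral $N$ when $m=4$) and the block sizes last, exactly the reverse order of the paper. Your route is more conceptual and dispenses with both of the paper's ad hoc devices (the rank-one identity and the Cauchy--Schwarz bound), at the price of the closing arithmetic, which you leave as claimed computations. Those computations do close, and in fact more easily than your sketch suggests: writing the class sizes as $n_a = t+d_a$, the two conditions $\operatorname{tr} M = 3(4t+1)$ and $\operatorname{tr}(M^2) = 2(4t)^2+(4t+3)^2$ (since every admissible pattern has all off-diagonal entries $\pm1$, one has $\operatorname{tr}(M^2) = 8\sum_a n_a^2 + (4t+1)^2$, for either switching class) yield, for $m=4$, $\sum_a d_a = \sum_a d_a^2 = 1$, whose only integer solutions are a single $d_a=1$ and the rest zero, i.e.\ sizes $(t+1,t,t,t)$ up to order; for $m=3$ they yield $\sum_a d_a = t+1$ and $\sum_a d_a^2 = 1-t^2$, which has no real solution for $t\geqslant 1$ --- a stronger failure than the non-integrality you predict, and one that disposes of both order-$3$ switching classes at once. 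I would substitute this two-line count for your appeal to ``symmetric differences of the last three blocks,'' which correctly describes the $4t$-eigenvectors of the final matrix but is not by itself a uniqueness argument. Your treatment of the ``moreover'' clause, via $\ker S^\prime = \ker(S^\prime)^2$ together with the verification that the first row $\mathbf{r}$ of the block form satisfies $(P^\top MP)\mathbf{r} = (4t+3)\mathbf{r}$, is correct and independent of the paper's.
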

	\begin{proof}
%
		Let $M^\prime = 3I + S^2$.
		Then $MM^\prime = S^4 + (4t+6)S^2 + 3(4t+3)I = \mathbf{v}\mathbf{v}^\top$, where $\mathbf{v}$ spans the $0$-eigenspace of $S$.
		Whence $\mathbf{v}^\top \mathbf{v} = 3(4t+1)$.
		Consider the $(1,1)$-th entry $e_{11}$ of $M M^\prime$. 
		By above, we see that $e_{11} = v_1^2 \geqslant 0$.
		
		The matrix $M$ has spectrum $\{ [0]^{4t-2}, [4t]^2, [4t+3]^1\}$.
             By Lemma~\ref{lem:posDef}, entries of $M$ are members of the set $\{\pm 1, \pm 3\}$.
		Thus, by switching (if necessary), we can assume that the first row of $M$ consists only of entries equal to $1$ or $3$.
		Suppose the first row of $M$, $\mathbf{r}_1$ (say), has its first $a$ entries equal to $3$ and the remaining $4t+1-a$ entries equal to $1$. 
		Then $e_{11} = 8a -2(4t+3) + 7$.
		Hence, from the inequality $e_{11} \geqslant 0$, we have the lower bound for $a$,
		\begin{equation}
			\label{eqn:LB4a}
			a \geqslant \frac{8t-1}{8}.
		\end{equation}
		For an upper bound we use Cauchy-Schwarz for $\mathbf{v}$ and $\mathbf{r}_1$.
		Since $\mathbf{v}$ is in the $(4t+3)$-eigenspace of $M$, $\mathbf{r}_1^\top \mathbf{v} = (4t+3) v_1$.
		From above, observe that $e_{11} = v_1^2 = 8a -2(4t+3) +7$. 
		Hence we have
		\begin{equation}
			\label{eqn:UB4a}
			a \leqslant t+1.
		\end{equation}
	
		Using the upper and lower bounds \eqref{eqn:LB4a} and \eqref{eqn:UB4a}, we deduce that $a = t$ or $a = t+1$.
		
		By Lemma~\ref{lem:PDform}, we can assume the first $a$ rows of $S$ are equal to $\mathbf{r}_1$.
		Up to rearranging and switching rows (and their corresponding columns) we can apply the above argument to every row of $S$.
		Therefore each row of $S$ has either $t$ or $t+1$ entries equal to $\pm 3$.
		Moreover, by rearranging the rows (and corresponding columns), we can assume that the entries $\pm 3$ appear in diagonal blocks of $M$.
		Since $S$ has order $4t+1$, it must have precisely four distinct rows $\mathbf{r}_1$, $\mathbf{r}_2$, $\mathbf{r}_3$, and $\mathbf{r}_4$, three of which occur $t$ times and one occurring $t+1$ times.
		Without loss of generality, we assume that $\mathbf{r}_1$ occurs $t+1$ times.
		Observe that $\mathbf{r}_1$ corresponds to equality in \eqref{eqn:UB4a}, and hence $\mathbf{r}_1$ is equal to $\mathbf{v}$.
		 
		Since $M$ has rank $3$, each row $\mathbf{r}_i$ must be a linear combination of the other three.
		Write $\mathbf{r}_1 = c_2\mathbf{r}_2 +c_3\mathbf{r}_3 + c_4\mathbf{r}_4$.
		Up to switching equivalence, we can assume that all entries of $\mathbf{r}_1$ are positive.
		Equating coefficients gives us the following equations
		\begin{align*}
			3 &= c_2 + c_3 + c_4 \\
			1 &= 3 \varepsilon_2 c_2 + \sigma c_3 + \tau c_4 \\
			1 &= \sigma c_2 + 3 \varepsilon_3 c_3 + \xi c_4 \\
			1 &= \tau c_2 + \xi c_3 + 3 \varepsilon_4 c_4,
		\end{align*}
		where $\varepsilon_2, \varepsilon_3, \varepsilon_4, \sigma, \tau,$ and $\xi$ are members of $\{\pm 1\}$.
		It is straightforward to check that the only solution to this system of equations is when $\varepsilon_2 = \varepsilon_3 = \varepsilon_4 = 1$, $\sigma = \tau = \xi = -1$, and $c_2 = c_3 = c_4 = 1$.
		This gives us the desired form for $M$.
		\end{proof}
		
		Before completing the proof of Theorem~\ref{thm:skewMain} we need an intermediate result.
		
		\begin{lemma}\label{lem:bfofSM}
			Let $S$ be a skew-symmetric Seidel matrix of order $4t+1$ with spectrum 
			\[
				\{ [\pm \sqrt{-4t-3}]^{2t-1}, [\pm \sqrt{-3}]^1, [0]^1\}.
			\]
			Let $M=(4t+3)I+S^2$ and let $P \in \mathcal O_{4t+1}(\mathbb{Z})$ satisfy Equation~\eqref{eqn:sqForm3}.
			Then
			\[
				P^\top SM P = \begin{pmatrix}
					O & O & O & O \\
					O & O & -4J_t & 4 J_t \\
					O & 4 J_t & O & -4 J_t \\
					O & -4 J_t & 4 J_t & O
				\end{pmatrix}.
			\]
		\end{lemma}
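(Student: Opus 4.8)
The plan is to pass to the conjugated matrices $\widetilde S := P^\top S P$ and $\widetilde M := P^\top M P$. Since $P$ is orthogonal we have $\widetilde M = (4t+3)I + \widetilde S^2$, so $\widetilde M$ is a polynomial in $\widetilde S$ and the two commute; moreover $\widetilde M$ is the explicit block matrix of Equation~\eqref{eqn:sqForm3} and our goal is the identity $\widetilde S\widetilde M = N$, where $N$ denotes the right-hand side of the claimed equation. As $\widetilde M$ is symmetric, $\widetilde S$ is skew-symmetric, and they commute, the product $\widetilde S\widetilde M$ is automatically skew-symmetric, which I would keep as a running sanity check. First I would record the four distinct columns $\mathbf c_1,\dots,\mathbf c_4$ of $\widetilde M$ (one per diagonal block), note the relation $\mathbf c_2+\mathbf c_3+\mathbf c_4=\mathbf c_1$, and observe that the columns of $\widetilde S\widetilde M$ are precisely the vectors $\widetilde S\mathbf c_j$. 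By the ``moreover'' clause of Lemma~\ref{lem:bfofM}, $\mathbf c_1$ is the null vector of $\widetilde S$, so $\widetilde S\mathbf c_1=\mathbf 0$; hence the columns indexed by the first block vanish, matching the zero first block-column of $N$.

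The structural heart is that $W:=\operatorname{range}(\widetilde M)=\operatorname{span}(\mathbf c_1,\mathbf c_2,\mathbf c_3)$ is three-dimensional and $\widetilde S$-invariant: since $\widetilde S$ commutes with $\widetilde M$, if $\mathbf y=\widetilde M\mathbf x$ then $\widetilde S\mathbf y=\widetilde M(\widetilde S\mathbf x)\in\operatorname{range}(\widetilde M)$. Thus every $\widetilde S\mathbf c_j$ lies in $W$, and it suffices to determine the $3\times 3$ matrix $B$ representing $\widetilde S|_W$ in the basis $(\mathbf c_1,\mathbf c_2,\mathbf c_3)$. Its square is easy: a short column computation gives $\widetilde M\mathbf c_j=\mathbf c_1+4t\,\mathbf c_j$ for $j=2,3$, whence $\widetilde S^2\mathbf c_j=(\widetilde M-(4t+3)I)\mathbf c_j=\mathbf c_1-3\mathbf c_j$, so $B^2$ is the explicit matrix with columns $\mathbf 0$, $(1,-3,0)^\top$, and $(1,0,-3)^\top$. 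Next I would compute the Gram matrix $G$ of $(\mathbf c_1,\mathbf c_2,\mathbf c_3)$ and translate the skew-symmetry of $\widetilde S$ on $W$ into the condition that $GB$ be skew-symmetric (equivalently $B^\top G=-GB$).

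Solving for $B$ is the main obstacle. I want the real matrix with $B^2$ as above, vanishing first column (from $\widetilde S\mathbf c_1=\mathbf 0$), and $GB$ skew-symmetric. These conditions determine $B$ only up to an overall sign, the ambiguity reflecting that $-S$ also satisfies the hypotheses and yields $-N$. I would resolve this by noting that transposing two of the three ``$t$-blocks'' preserves the block form of $\widetilde M$ in Equation~\eqref{eqn:sqForm3} while negating $N$: it conjugates the underlying $3\times 3$ skew-circulant by a transposition, sending it to its negative. Hence, after possibly composing $P$ with such a signed permutation, we may take $B=\left(\begin{smallmatrix}0&-1&1\\0&1&-2\\0&2&-1\end{smallmatrix}\right)$, for which $GB$ is indeed skew-symmetric with the expected entries. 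Reading off the columns gives $\widetilde S\mathbf c_2=\mathbf c_3-\mathbf c_4$ and $\widetilde S\mathbf c_3=\mathbf c_1-2\mathbf c_2-\mathbf c_3$, together with $\widetilde S\mathbf c_4=-\widetilde S\mathbf c_2-\widetilde S\mathbf c_3$. Finally, expanding each $\widetilde S\mathbf c_j$ in the indicator vectors of the four blocks recovers exactly the blocks $\pm 4J_t$ of $N$, which completes the proof.
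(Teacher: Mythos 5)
Your proof is correct, but it takes a genuinely different route from the paper's. The paper works entry-by-entry on $X := P^\top SMP$: it observes that $X$ is skew-symmetric with its first $t+1$ rows and columns zero, derives the counting identity $3p_1+p_2+p_3+p_4=3t+1$ from the orthogonality of each row of $P^\top SP$ to $\mathbf{r}_1$, extracts a modulo-$8$ congruence for the inner products of rows of $P^\top SP$ with the $\mathbf{r}_i$, and then applies interlacing to $\sqrt{-1}X$ (whose spectrum is $\{[0]^{4t-1},[\pm 4t\sqrt{3}]^1\}$) to exclude entries of absolute value at least $8$, forcing these inner products into $\{0,\pm 4\}$ and hence the block pattern ``up to relabelling the $\mathbf{r}_i$''. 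You instead restrict $\widetilde S$ to the three-dimensional $\widetilde S$-invariant subspace $W=\operatorname{range}\widetilde M$ and determine the matrix $B$ of this restriction from three constraints: $B^2$ (computed from $\widetilde M=(4t+3)I+\widetilde S^2$ and the identity $\widetilde M\mathbf{c}_j=\mathbf{c}_1+4t\,\mathbf{c}_j$, which I verified), the vanishing first column, and skew-symmetry encoded by the Gram matrix; this bypasses the counting, congruence, and interlacing arguments entirely and is arguably more conceptual. The one step you assert rather than prove --- that the constraints determine $B$ up to sign --- is true and deserves its one-line justification: since $\widetilde S\mathbf{c}_1=\mathbf 0$ and $\widetilde S$ is skew, the two-dimensional subspace $W\cap\mathbf{c}_1^{\perp}$ is $\widetilde S$-invariant, and a real skew-symmetric operator on a Euclidean plane squaring to $-3I$ must be $\pm\sqrt{3}$ times a quarter-turn, so exactly the two solutions $\pm B$ occur. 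Finally, your sign discussion makes explicit a point the paper glosses over: replacing $S$ by $-S$ (or composing $P$ with a transposition of two $t$-blocks) preserves Equation~\eqref{eqn:sqForm3} but negates $P^\top SMP$, so the conclusion cannot hold for \emph{every} admissible $P$; both your argument and the paper's (via its closing ``up to relabelling the $\mathbf{r}_i$'') establish it after a suitable choice of $P$, which is all that the application in Theorem~\ref{thm:finalPart} requires.
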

		\begin{proof}
			We write $\mathbf{r}_i$ for the $i$th distinct row of $P^\top MP$.
			Consider the matrix $X:=P^\top SM P$.
			First, since $X$ is the sum of odd powers of a skew-symmetric matrix, it is clear that $X$ has zero diagonal.
			Since $\mathbf{r}_1$ is in the kernel of $P^\top SP$ and $X$ is skew-symmetric, the first $t+1$ rows and columns of $X$ are all zero.
			Denote by $s_j$ the $j$th row of $P^\top SP$.
			Suppose $j > t+1$.
			Think of $s_j$ as consisting of $4$ parts, each one corresponding to a distinct row of $P^\top MP$.
			Let $p_l$ denote the number of positive entries in the $l$th part of $s_j$.
			Since $s_j \mathbf{r}_1^\top = 0$, we have
			\begin{equation}
				\label{eqn:ps}
				3 p_1 + p_2 + p_3 + p_4 = 3t +1.
			\end{equation}
			Using equation \eqref{eqn:ps}, one can obtain the following modulo $8$ congruence
			\[
				s_j \cdot \mathbf{r}_t \equiv \begin{cases}
					0, & \text{ if the $j$th row of $P^\top MP$ is $\mathbf{r}_t$ } \\
					4, & \text{ otherwise }
				\end{cases} \pmod 8.
			\]
		
			The matrix $\sqrt{-1}X$ has spectrum $\{ [0]^{4t-1}, [\pm 4t\sqrt{3}]^1 \}$.
			By interlacing, $\sqrt{-1}X$ does not contain a principal submatrix of the form 
			\[
				\begin{pmatrix}
					O & \xi \sqrt{-1}J_t \\
					\xi \sqrt{-1}J_t & O
				\end{pmatrix},
			\]
			where $|\xi| \geqslant 8$.
			Hence $s_j \cdot \mathbf{r}_t \in \{0,\pm 4\}$.
			Observe that $\mathbf{r}_1 = \mathbf{r}_2 + \mathbf{r}_3 + \mathbf{r}_4$.
			Hence, up to relabelling the $\mathbf{r}_i$, the matrix $X$ has the required form.
		\end{proof}
		
		Finally, we can complete the proof of Theorem~\ref{thm:skewMain}.
		
		\begin{theorem}\label{thm:finalPart}
			Let $S$ be a skew-symmetric Seidel matrix of order $4t+1$ with spectrum 
			\[
				\{ [\pm \sqrt{-4t-3}]^{2t-1}, [\pm \sqrt{-3}]^1, [0]^1\}.
			\]
			Then there exists a vector $\mathbf{x} \in \{\pm 1\}^{4t+1}$ such that the matrix
			\[
				\begin{pmatrix}
					S & \mathbf{x} \\
					-\mathbf{x}^\top & 0
				\end{pmatrix}
			\]
			is a skew-symmetric Seidel matrix with spectrum $\{ [\pm \sqrt{-4t-3}]^{2t}, [\pm \sqrt{-1}]^1 \}$.
		\end{theorem}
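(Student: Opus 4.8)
The plan is to follow the augmentation template of Lemma~\ref{lem:del1up} and Lemma~\ref{lem:del2up}: I will exhibit an explicit $\{\pm1\}$-vector $\mathbf{x}$, form the bordered matrix $T = \left(\begin{smallmatrix} S & \mathbf{x} \\ -\mathbf{x}^\top & 0\end{smallmatrix}\right)$, and show that $(4t+3)I + T^2$ is switching equivalent to $\diag(2J_{2t+1},2J_{2t+1})$, so that Lemma~\ref{lem:del2} reads off the spectrum of $T$. By Lemma~\ref{lem:bfofM} I may assume, after switching, that $M = (4t+3)I + S^2$ is in the block form \eqref{eqn:sqForm3}, with diagonal blocks of sizes $t+1,t,t,t$ and corresponding distinct rows $\mathbf{r}_1,\dots,\mathbf{r}_4$; recall that $\mathbf{r}_1$ spans the null space of $S$ and that $\mathbf{r}_1 = \mathbf{r}_2 + \mathbf{r}_3 + \mathbf{r}_4$. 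Write $\mathbf{e}_i$ for the characteristic vector of the $i$th diagonal block, so that $M\mathbf{e}_1 = (t+1)\mathbf{r}_1$ and $M\mathbf{e}_i = t\,\mathbf{r}_i$ for $i \in \{2,3,4\}$.

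The crux is the choice of $\mathbf{x}$. I take $\mathbf{x} := \tfrac12(\mathbf{r}_2+\mathbf{r}_3)$; a direct check shows this is precisely the $\{\pm1\}$-vector equal to $1$ on the first three blocks and $-1$ on the last block. (That $\mathbf{x}$ should be constant on the diagonal blocks is forced by requiring the entries of $M-\mathbf{x}\mathbf{x}^\top$ to stay in $\{0,\pm2\}$, which is why a vector of this shape is the natural candidate.) The advantage of writing $\mathbf{x}$ as a combination of columns of $M$ is that $S\mathbf{x}$ can be read straight off Lemma~\ref{lem:bfofSM}: since $\mathbf{x} = \tfrac{1}{2t}(M\mathbf{e}_2 + M\mathbf{e}_3)$, we obtain $S\mathbf{x} = \tfrac{1}{2t}(SM\mathbf{e}_2 + SM\mathbf{e}_3) = 2(\mathbf{e}_3 - \mathbf{e}_2)$.

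With $\mathbf{x}$ in hand, using $\mathbf{x}^\top\mathbf{x} = 4t+1$ and the skew-symmetry of $S$, I compute
\[
(4t+3)I + T^2 = \begin{pmatrix} M - \mathbf{x}\mathbf{x}^\top & S\mathbf{x} \\ (S\mathbf{x})^\top & 2\end{pmatrix}.
\]
Substituting the block form of $M$, the rank-one matrix $\mathbf{x}\mathbf{x}^\top$, and $S\mathbf{x} = 2(\mathbf{e}_3-\mathbf{e}_2)$, one finds that $M-\mathbf{x}\mathbf{x}^\top$ has only the block-coefficients $0$ and $\pm2$, and that the coordinates of blocks $1,4$ decouple from the coordinates of blocks $2,3$ together with the new coordinate.

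The final step is to verify the switching equivalence. Grouping blocks $1$ and $4$ already produces the all-ones block $2J_{2t+1}$; grouping blocks $2,3$ with the new coordinate produces a matrix that becomes $2J_{2t+1}$ after negating the coordinates of block $3$ and the new coordinate, while all cross terms between the two groups vanish. Hence $(4t+3)I+T^2$ is switching equivalent to $\diag(2J_{2t+1},2J_{2t+1})$, and Lemma~\ref{lem:del2} gives that $T$ has spectrum $\{[\pm\sqrt{-4t-3}]^{2t},[\pm\sqrt{-1}]^1\}$, as required. I expect the main obstacle to be the correct identification of $\mathbf{x}$ — namely recognising that $\tfrac12(\mathbf{r}_2+\mathbf{r}_3)$ is simultaneously a genuine $\{\pm1\}$-vector and a column combination of $M$, which is exactly what makes $S\mathbf{x}$ computable from Lemma~\ref{lem:bfofSM} — together with the bookkeeping of the explicit signing and regrouping that exhibits the switching equivalence.
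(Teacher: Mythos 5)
Your proof is correct. Your bordering vector is in fact the same (up to the symmetry of the construction) as the paper's: since $\mathbf{r}_1 = \mathbf{r}_2+\mathbf{r}_3+\mathbf{r}_4$, your $\mathbf{x} = \tfrac12(\mathbf{r}_2+\mathbf{r}_3)$ equals $\tfrac12(\mathbf{r}_1-\mathbf{r}_4)$, which is precisely one of the two variants the paper's closing remark says could replace its own choice $\tfrac12(\mathbf{r}_1-\mathbf{r}_2)$. Where you genuinely diverge is in the verification of the spectrum. The paper never computes $(4t+3)I+T^2$; instead it exhibits eigenvectors of the bordered matrix directly --- every $(\pm\sqrt{-4t-3})$-eigenvector of $P^\top SP$ lies in the null space of $M$, hence is orthogonal to $\mathbf{x}$ and extends by appending a zero, while $\mathbf{w} = (\mathbf{r}_3-\mathbf{r}_4)/4 + \sqrt{-1}\,(\mathbf{r}_1+\mathbf{r}_2)/4$ appended with $-1$ gives the $(\pm\sqrt{-1})$-eigenvectors --- and then pins down the two remaining eigenvalues by the trace identities of Proposition~\ref{pro:basic}. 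You instead compute the square completely: your identity $S\mathbf{x} = 2(\mathbf{e}_3-\mathbf{e}_2)$ follows correctly from Lemma~\ref{lem:bfofSM}, your block form of $M-\mathbf{x}\mathbf{x}^\top$ checks out, blocks $\{1,4\}$ do decouple from blocks $\{2,3\}$ together with the new coordinate, and negating block $3$ and the new coordinate does produce $\operatorname{diag}(2J_{2t+1},2J_{2t+1})$, so the easy direction of Lemma~\ref{lem:del2} finishes the argument. What your route buys is uniformity: it runs on exactly the template of Lemmas~\ref{lem:del1up} and~\ref{lem:del2up} (compute the bordered square, identify its switching class, read off the spectrum), and as a by-product it concretely exhibits $T$ as a witness to part (iii) of Theorem~\ref{thm:squareFormSkew} rather than only to part (iii) of Theorem~\ref{thm:skewMain}. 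What the paper's route buys is brevity: no regrouping or sign bookkeeping, at the cost of producing the $(\pm\sqrt{-1})$-eigenvectors without motivation and needing Proposition~\ref{pro:basic} to account for the last two eigenvalues. The only point worth making explicit in your write-up is the (routine, and also implicit in the paper) observation that proving the statement for $P^\top SP$ with bordering vector $\mathbf{x}$ proves it for $S$ with bordering vector $P\mathbf{x}$, which is again a $\{\pm 1\}$-vector since $P$ is a signed permutation matrix.
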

		\begin{proof}
			Let $M=(4t+3)I+S^2$ and let $P \in \mathcal O_{4t+1}(\mathbb{Z})$ satisfy Equation~\eqref{eqn:sqForm3}.
			We write $\mathbf{r}_i$ for the $i$th distinct row of $P^\top M P$.
			By Lemma~\ref{lem:bfofSM} we have
			\begin{align*}
				P^\top S P \mathbf{r}_2 &= \mathbf{r}_3 - \mathbf{r}_4; \\
				P^\top S P \mathbf{r}_3 &= \mathbf{r}_4 - \mathbf{r}_2; \\
				P^\top S P \mathbf{r}_4 &= \mathbf{r}_2 - \mathbf{r}_3.
			\end{align*}
			Set $\mathbf{x} = (\mathbf{r}_1 -\mathbf{r}_2)/2$.
			Observe that $\mathbf{x}$ is a $\{\pm 1\}$-vector.
			Let $S^\prime$ be the $(4t+2) \times (4t+2)$ matrix given by
			\[
				S^\prime = \begin{pmatrix}
					P^\top S P & \mathbf{x} \\
					-\mathbf{x}^\top & 0
				\end{pmatrix}.
			\]
			
			Let $\mathbf{v}$ be a $(\pm\sqrt{-4t-3})$-eigenvector of $P^\top S P$.
			Then $\mathbf{v}$ is in the null space of $M$ and hence $\mathbf{v}$ is orthogonal to each of the vectors $\mathbf{r}_i$ ($1 \leqslant i \leqslant 4$).
			Thus $\mathbf{v}$ is orthogonal to $\mathbf{x}$.
			It follows that by appending a zero to $\mathbf{v}$ one obtains a $(\pm\sqrt{-4t-3})$-eigenvector of $S^\prime$.
			
			Let $\mathbf{w} = (\mathbf{r}_3 - \mathbf{r}_4)/4 + \sqrt{-1} (\mathbf{r}_1 + \mathbf{r}_2)/4$.
			One can check that by appending $-1$ to the vectors $\mathbf{w}$ and $\overline{\mathbf{w}}$, we obtain eigenvectors of $S^\prime$ corresponding to the eigenvalues $\pm \sqrt{-1}$.
		
			Therefore $S^\prime$ has eigenvalues $\pm \sqrt{-4t-3}$ each with multiplicity at least ${2t-1}$ and $\pm \sqrt{-1}$ each with multiplicity at least $1$.
			Let $\lambda$ and $\mu$ denote the remaining two unknown eigenvalues.
			By Proposition~\ref{pro:basic}, we deduce that $\lambda = - \mu = \sqrt{-4t-3}$.
		\end{proof}
		
		\begin{remark}
			In the proof we can apply a similar argument taking $(\mathbf{r}_1 - \mathbf{r}_3)/2$ or $(\mathbf{r}_1 - \mathbf{r}_4)/2$ as the extra row and column for $S^\prime$.
		\end{remark}

\section*{Acknowledgements} 
\label{sec:acknowledgements}

We thank the referees for their careful reading of the manuscript and their helpful comments.

\bibliographystyle{myplain}

\end{document}